\theoremstyle{plain}
\newtheorem{thm}{Theorem}[section] 
\newtheorem{lem}[thm]{Lemma} 
\newtheorem{cor}[thm]{Corollary}
\theoremstyle{remark} 
\newtheorem*{rema}{Remark}
\numberwithin{equation}{section}
\def\R{\mathbb R}
\def\val#1{\vert#1\vert}
\def\sign{\operatorname{sign}}
\begin{document}
\title{An energy method for averaging lemmas}
\author{Diogo Ars\'enio} 
\address { New York University Abu Dhabi \\
Abu Dhabi \\
United Arab Emirates} 
\email{diogo.arsenio@nyu.edu}
\author{Nicolas Lerner}
\address {Institut de Math\'ematiques de Jussieu,
Sorbonne Universit\'e \\
Campus Pierre et Marie Curie,
4 Place Jussieu,
75252 Paris cedex 05 \\
France} 
\email{nicolas.lerner@imj-prg.fr}
%
\keywords{Kinetic transport equation, averaging lemmas, energy method, duality principle, maximal regularity, hypoellipticity, dispersion.}
\date{\today}

\begin{abstract}
	This work introduces a new approach to velocity averaging lemmas in kinetic theory. This approach---based upon the classical energy method---provides a powerful duality principle in kinetic transport equations which allows for a natural extension of classical averaging lemmas to previously unknown cases where the density and the source term belong to dual spaces. More generally, this kinetic duality principle produces regularity results where one can trade a loss of regularity or integrability somewhere in the kinetic transport equation for a suitable opposite gain elsewhere.
	Also, it looks simpler and more robust to rely on proving inequalities instead of constructing exact parametrices.
	\par
	The results in this article are introduced from a functional analytic point of view. They are motivated by the abstract regularity theory of kinetic transport equations. However, we may recall that velocity averaging lemmas have profound implications in kinetic theory and its related physical models. In particular, the precise formulation of such results has the potential to lead to important applications to the regularity of renormalizations of Boltzmann-type equations, as well as kinetic formulations of gas dynamics, for instance.
\end{abstract}

\maketitle


\section{Introduction}

From a purely mathematical point of view, kinetic theory is generally concerned with the study of partial differential equations, or integro-differential equations, describing the evolution of a non-negative density $f(t,x,v)$ of particles, where $t\in\mathbb{R}$ is the time, $x\in\Omega\subset\mathbb{R}^n$ is their location in some domain, and $v\in\mathbb{R}^n$ represents their velocities.

Examples of such equations include the celebrated Boltzmann equation
\begin{equation}\label{boltzmann}
	\partial_tf+v\cdot\nabla_xf=Q(f,f),
\end{equation}
which describes the evolution of a dilute gas. The transport operator in the left-hand side expresses the fact that particles travel at velocity $v$, whereas the Boltzmann operator $Q(f,f)$ in the right-hand side is a quadratic integral operator taking into account the changes in density due to collisions in the gas. The existence of global renormalized solutions to the Boltzmann equation was first established in \cite{dl89:1}. We also refer to \cite{cip94} for an introduction to the subject.

The Vlasov--Maxwell system (with $n=3$)
\begin{equation}\label{VM}
	\left\{
	\begin{aligned}
		& \partial_tf+v\cdot\nabla_xf+(E+v\times B)\cdot\nabla_vf=0,\\
		& \partial_tE-\nabla_x\times B=-\int_{\mathbb{R}^3}vf(t,x,v)dv, & \nabla_x\cdot E&=\int_{\mathbb{R}^3}f(t,x,v)dv,\\
		& \partial_tB+\nabla_x\times E=0, & \nabla_x\cdot B&=0,
	\end{aligned}
	\right.
\end{equation}
which describes the evolution of a plasma whose charged particles are subject to the action of a self-induced electromagnetic field $(E,B)$, is another fundamental example of a kinetic model coming from non-equilibrium statistical mechanics. The existence of global weak solutions to the above system was established in \cite{dl89:2}.

In both examples, the regularity (or compactness) properties of the kinetic transport equation
\begin{equation}\label{transport:2}
	(\partial_t+v\cdot\nabla_x)f(t,x,v)=g(t,x,v),
\end{equation}
or its stationary version
\begin{equation}\label{transport:1}
	v\cdot\nabla_xf(x,v)=g(x,v),
\end{equation}
played a crucial role in showing the existence of global solutions (see \cite{dl89:2,dl89:1} for details).
\par
Observe, though, that for a fixed velocity field, in general, transport equations do not provide any regularization of solutions, because they propagate singularities due to their hyperbolic behavior. However, in the kinetic interpretation of transport equations, the collective behavior of particles is taken into account, which leads to a smoothing effect known as \emph{velocity averaging.} The most essential form of velocity averaging was established in \cite{glps88}, where it was first shown that velocity averages of $f(t,x,v)$ may enjoy some smoothness, even though $f(t,x,v)$ itself is not regular.
\par
More precisely, employing the standard notation $H^s$, with $s\in\mathbb{R}$, to denote the Sobolev space of functions $f$ such that $(1-\Delta)^\frac s2f\in L^2$, it is shown in \cite{glps88} that, whenever $f,g\in L^2(\mathbb{R}_t\times\mathbb{R}^n_x\times\mathbb{R}^n_v)$ solve the kinetic transport equation \eqref{transport:2}, the velocity averages of $f$ satisfy the regularization
\begin{equation*}
	\int_{\mathbb{R}^n}f(t,x,v)\varphi(v)dv\in H^\frac 12(\mathbb{R}_t\times\mathbb{R}^n_x),
\end{equation*}
for any $\varphi\in L^\infty_c(\mathbb{R}^n)$. Similarly, the results from \cite{glps88} also establish that
\begin{equation}\label{average:1}
	\int_{\mathbb{R}^n}f(x,v)\varphi(v)dv\in H^\frac 12(\mathbb{R}^n_x),
\end{equation}
for any $\varphi\in L^\infty_c(\mathbb{R}^n)$, whenever $f,g\in L^2(\mathbb{R}^n_x\times\mathbb{R}^n_v)$ solve the stationary kinetic transport equation \eqref{transport:1}.

It is generally accepted that the stationary version \eqref{transport:1} describes the essential features of kinetic transport. In other words, it is in general possible to deduce a result for the non-stationary equation \eqref{transport:2} from a corresponding result on the stationary version of the equation \eqref{transport:1}, and vice versa (see the appendices of \cite{am14,am19,as11} for some details on how to relate the stationary and non-stationary cases). Therefore, for the sake of simplicity, we are now going to focus mainly on the stationary setting of kinetic transport.

\medskip
Based on previously existing averaging results and counterexamples, it was argued in \cite{a15,am19} that, when $n\geq 2$, the gain of half a derivative on the velocity averages displayed in \eqref{average:1} is the maximal gain one can hope for through velocity averaging (ignoring other regularizing phenomena such as hypoellipticity). This motivated the authors therein to seek other settings where half a derivative is gained on the averages. More precisely, employing the standard notation $W^{s,r}$, with $s\in\mathbb{R}$ and $1< r<\infty$, to denote the Sobolev space of functions $f$ such that $(1-\Delta)^\frac s2f\in L^r$, the following question was addressed:
\begin{center}
	\medskip
	\begin{minipage}{0.9\textwidth}
		For what values of $1< p,q,r<\infty$ does one have
		that $f,v\cdot\nabla_xf\in L^p(\mathbb{R}^n_x;L^q(\mathbb{R}^n_v))$
		implies $\int_{\mathbb{R}^n}f\varphi dv\in W^{s,r}(\mathbb{R}^n_x)$,
		for every $0\leq s<\frac 12$,
		where $\varphi\in L^\infty_c(\mathbb{R}^n)$ is given?
	\end{minipage}
	\medskip
\end{center}
This line of questioning turns out to be surprisingly rich and complex. In fact, many questions remain open and we refer to \cite{a15} for some conjectures. Moreover, the subject seems to bear some resemblance with difficult open problems from harmonic analysis concerning the boundedness of Bochner--Riesz multipliers and Fourier restriction operators, as well as smoothing conjectures for Schr\"odinger and wave equations. But definitive links between velocity averaging and harmonic analysis remain yet to be uncovered.

Some answers were provided in \cite{am19}, where it was shown that \eqref{average:1} holds, for any $\varphi\in L^\infty_c(\mathbb{R}^n)$ and in any dimension $n\geq 2$, whenever
\begin{equation*}
	f, v\cdot\nabla_x f \in L^p(\mathbb{R}^n_x;L^{p'}(\mathbb{R}^n_v)),
\end{equation*}
for some $\frac{2n}{n+1}<p\leq 2$. Furthermore, by building upon the work from \cite{jv04}, it was also established in \cite{am19} that
\begin{equation*}
	\int_{\mathbb{R}^n}f(x,v)\varphi(v)dv\in W^{s,\frac 43}(\mathbb{R}^n_x),
\end{equation*}
for any $\varphi\in L^\infty_c(\mathbb{R}^n)$ and for all $0\leq s<\frac n{4(n-1)}$, whenever
\begin{equation*}
	f, v\cdot\nabla_x f \in L^\frac 43(\mathbb{R}^n_x;L^{2}(\mathbb{R}^n_v)).
\end{equation*}
Observe that this result allows the regularity index $s$ to get arbitrarily close to the value $\frac 12$ when $n=2$ but not if $n\geq 3$, but there seems to be room for improvement (see \cite{a15, am19} for some challenging conjectures).

The one-dimensional case $n=1$ is much easier and a more complete picture was provided in \cite{am19}, as well. More precisely, it was established therein, for any given $1<p<\infty$ and $1<q\leq\infty$, that
\begin{equation*}
	\int_{\mathbb{R}}f(x,v)\varphi(v)dv\in W^{s,p}(\mathbb{R}_x),
\end{equation*}
for any $\varphi\in L^\infty_c(\mathbb{R})$ and for all $0\leq s<1-\frac 1q$, whenever
\begin{equation*}
	f, v\partial_x f \in L^p(\mathbb{R}_x;L^{q}(\mathbb{R}_v)).
\end{equation*}
Observe that the regularity index $s$ reaches beyond the value $\frac 12$ when $q>2$. However, the one-dimensional methods from \cite{am19} fail in higher dimension and, in fact, by virtue of a counterexample from \cite{dp01}, similar results cannot hold when $n\geq 2$.

\medskip
We wish now to further extend the field of maximal velocity averaging by inquiring:
\begin{center}
	\medskip
	\begin{minipage}{0.9\textwidth}
		For what values of $1< p_0,p_1,q_0,q_1,r<\infty$ does one have that $f\in L^{p_0}(\mathbb{R}^n_x;L^{q_0}(\mathbb{R}^n_v))$ and
		$v\cdot\nabla_xf\in L^{p_1}(\mathbb{R}^n_x;L^{q_1}(\mathbb{R}^n_v))$
		implies $\int_{\mathbb{R}^n}f\varphi dv\in \dot W^{s,r}(\mathbb{R}^n_x)$, for every $0\leq s<\frac 12$,
		where $\varphi\in L^\infty_c(\mathbb{R}^n)$ is given?
	\end{minipage}
	\medskip
\end{center}
Recall here that a given suitable function $h$ belongs to the homogeneous Sobolev space $\dot W^{s,r}$ (resp.\ $\dot H^s$) provided $(-\Delta)^\frac s2h$ belongs to $L^r$ (resp.\ $L^2$). This property defines, in general, a semi-norm over tempered distributions which we only use here to provide a regularity estimate. In particular, for simplicity, we refrain from discussing the settings in which such properties define Banach spaces and we refer the reader to \cite{bcd11} for more details on the subject.

In this work, we address the above problem by introducing a new energy identity for the kinetic transport equation in Section \ref{section:2}, which provides a crucial duality principle between the unknown density $f(x,v)$ and the source term $g(x,v)$ in \eqref{transport:1}. This kinetic duality will allow us to trade integrability and regularity between $f(x,v)$ and $g(x,v)$ without affecting the outcome on the smoothness of the velocity average \eqref{average:1}, thereby establishing new velocity averaging lemmas displaying a maximal gain of regularity of one half derivative.

More precisely, we establish in Section \ref{section:2} that
\begin{equation}\label{average:2}
	\int_{\mathbb{R}^n}f(x,v) dv\in \dot H^\frac 12(\mathbb{R}^n_x),
\end{equation}
whenever $f(x,v)$ is compactly supported and
\begin{equation}\label{bounds:1}
	\begin{aligned}
		f & \in L^p(\mathbb{R}_x^n;L^{q}(\mathbb{R}_v^n)),
		\\
		v\cdot\nabla_x f & \in L^{p'}(\mathbb{R}_x^n;L^{q'}(\mathbb{R}_v^n)),
	\end{aligned}
\end{equation}
for some $p,q\in (1,\infty)$. This result obviously includes and generalizes the classical Hilbertian case $p=q=2$.

In fact, it will be clear in Section \ref{section:2} that the energy method can be extended to a variety of dual couples of suitable reflexive Banach spaces. For instance, we will see that the bounds
\begin{equation*}
	\begin{aligned}
		(1-\Delta_x)^\frac a2 (1-\Delta_v)^\frac \alpha 2f & \in L^p(\mathbb{R}_x^n;L^{q}(\mathbb{R}_v^n)),
		\\
		(1-\Delta_x)^{-\frac a2} (1-\Delta_v)^{-\frac \alpha 2}(v\cdot\nabla_x f) & \in L^{p'}(\mathbb{R}_x^n;L^{q'}(\mathbb{R}_v^n)),
	\end{aligned}
\end{equation*}
for some $p,q\in (1,\infty)$ and any $a,\alpha\in\mathbb{R}$, will suffice to deduce \eqref{average:2}, provided $f(x,v)$ is compactly supported in $v$. These results exemplify perfectly how a trade-off between $f$ and $v\cdot\nabla_x f$ in both integrability and regularity can be exploited to preserve the smoothness of velocity averages.

In Section \ref{section:3}, we will delve further into the effects of hypoellipticity and singular source terms on the energy method, by assuming more general a priori regularity bounds on $f$ and $v\cdot\nabla_xf$. A careful analysis will show us how to trade integrability and regularity between $f$ and $v\cdot\nabla_x f$ in order to preserve the smoothness of velocity averages deduced from the corresponding Hilbertian case. Precise statements of such results are found in Corollaries \ref{result:5}, \ref{result:4}, \ref{result:6} and \ref{result:7}, below.\footnote{Observe, in the statements of Corollaries \ref{result:5}, \ref{result:4}, \ref{result:6} and \ref{result:7}, how the regularity index $\sigma$ is independent of the integrability parameters $p$ and $q$, thereby showing that the smoothness of the Hilbertian case $p=q=2$ is preserved when trading integrability.}

Finally, in Section \ref{section:dispersion}, in order to further illustrate the robustness and versatility of the energy method, we will explore how kinetic dispersive phenomena can provide new averaging lemmas when combined with the energy method. Specifically, we investigate therein how the bounds
\begin{equation*}
	\begin{aligned}
		f & \in L^{r_0}(\mathbb{R}_x^n;L^{p_0}(\mathbb{R}_v^n)),
		\\
		v\cdot\nabla_x f & \in L^{r_1}(\mathbb{R}_x^n;L^{p_1}(\mathbb{R}_v^n)),
	\end{aligned}
\end{equation*}
for some $1<r_0\leq p_0<\infty$ and $1<r_1\leq p_1<\infty$, lead to \eqref{average:2} and we show in Theorem \ref{result:8} that, under suitable hypotheses, the regularity bound \eqref{average:2} is a consequence of the harmonic mean of $r_0$, $p_0$, $r_1$ and $p_1$ being equal to $2$, i.e.,
\begin{equation*}
	\left(\frac 14\left(\frac1{r_0}+\frac1{p_0}+\frac1{r_1}+\frac1{p_1}\right)\right)^{-1}=2.
\end{equation*}
For instance, provided $f(x,v)$ is compactly supported in $v$, we show in Corollaries \ref{result:9} and \ref{result:10} that \eqref{average:2} holds whenever:
\begin{itemize}
	\item $1<r_0\leq p_0'\leq 2\leq p_0<\frac{2n}{n-1}$, $r_1=p_0'$ and $p_1=r_0'$ (set $r_1=p_0'$ and $p_1=r_0'$ in Corollary~\ref{result:9}; in this case, further assume that $f$ is compactly supported in $x$ for technical reasons).
	\item $2\leq p_0<\frac{2n}{n-1}$, $r_0=p_0'$ and $r_1=p_1=2$ (set $r_0=p_0'$ and $r_1=p_1=2$ in either Corollary~\ref{result:9} or \ref{result:10}).
	\item $2\leq p_0\leq r_1'<\frac{2n}{n-1}$, $r_0=p_0$ and $p_1=r_1'$ (set $r_0=p_0$ and $p_1=r_1'$ in Corollary~\ref{result:10}).
\end{itemize}
It would certainly be interesting to determine the full optimal range of parameters $p_0$, $r_0$, $p_1$ and $r_1$ which lead to the smoothness of velocity averages \eqref{average:2}.

Overall, by providing a new duality principle in kinetic equations, the energy method greatly expands the knowledge of regularity theory in kinetic equations. It is to be emphasized, though, that the significance of precise formulations of averaging lemmas reaches beyond our purely academic interest in functional analysis. Indeed, it is widely believed that refined averaging lemmas will eventually lead to sharp regularity results in kinetic equations, which is of particular interest in models stemming from physical phenomena.

For example, observe that the energy method may be useful in the study of the regularity of renormalized solutions of the Boltzmann equation \eqref{boltzmann}. Indeed, renormalizations of particle densities $f(t,x,v)$ can easily be chosen so that they are uniformly bounded. However, renormalizing the Boltzmann collision operator $Q(f,f)$ only leads, in general, to a locally integrable right-hand side in \eqref{boltzmann} (or slightly better than integrable, thanks to entropy bounds). Is it possible to show that velocity averages of renormalizations of $f$ belong to $\dot H^{1/2}_{\rm loc}$? In view of \eqref{bounds:1}, this seems plausible, but remains unsettled for the moment.

Furthermore, velocity averaging lemmas have long been used to establish regularity properties of hyperbolic systems through the study of their kinetic formulations. For instance, Theorem 4 in \cite{lpt94:1} provides an example of an early application of averaging lemmas to kinetic formulations of scalar conservation laws, while Proposition 7 in \cite{lpt94:2} gives a similar result in the context of isentropic gas dynamics. A systematic and robust approach to establishing regularity in kinetic formulations via averaging lemmas is given in \cite{jp02}.

However, the regularity properties obtained by this procedure generally fail to match the expected optimal regularity in the corresponding non-linear hyperbolic systems. This shortcoming is likely a consequence of a lack of sharp suitable averaging lemmas (note that kinetic formulations often produce densities with very unique regularity and integrability properties in each variable, thereby requiring very specific averaging lemmas). Moreover, other powerful methods capable of reaching optimal regularity are sometimes available (see \cite{gp13}, for instance), which outperforms velocity averaging procedures. By establishing precise and sharp velocity averaging results, the hope is therefore to recover the optimal regularity for conservation laws. If successful, this procedure could lead to a robust method with potential applications to more kinetic formulations of other systems.

We acknowledge that answering these questions remains challenging, though. Nevertheless, the energy method contributes to this endeavor.

\section{The energy method}
\label{section:2}

We introduce here the energy method for the kinetic transport equation \eqref{transport:1} and deduce some new velocity averaging lemmas from it. These new results display a powerful duality principle between the particle density $f(x,v)$ and the source term $g(x,v)$ in \eqref{transport:1}, which, loosely speaking, allows us to trade integrability (and regularity) between $f(x,v)$ and $g(x,v)$.

\medskip
For any locally integrable function $f(x,v)$ defined on $\mathbb{R}^n\times\mathbb{R}^n$ and compactly supported in the $v$ variable, we define the velocity average of $f$ by
\begin{equation*}
	\tilde f(x)=\int_{\mathbb{R}^n}f(x,v)dv.
\end{equation*}
The Fourier transform of $f$ in all variables will be denoted by
\begin{equation*}
	\widehat f(\xi,\eta)=\iint_{\mathbb{R}^n\times\mathbb{R}^n}f(x,v)e^{-i(\xi\cdot x+\eta\cdot v)}dxdv,
\end{equation*}
so that the dual variable of $x$ (resp.\ $v$) is denoted by $\xi$ (resp.\ $\eta$). For convenience, we will also sometimes use the notation $\mathcal{F}f(\xi,\eta)=\widehat f(\xi,\eta)$. The inverse Fourier transform can then be denoted as
\begin{equation*}
	\mathcal{F}^{-1}h(x,v)=\frac1{(2\pi)^{2n}}\iint_{\mathbb{R}^n\times\mathbb{R}^n}h(\xi,\eta)e^{i(x\cdot\xi+v\cdot\eta)}d\xi d\eta.
\end{equation*}
Observe then that
\begin{equation*}
	\hspace{3pt}\widehat{\hspace{-3pt}\tilde f}(\xi)
	=\iint_{\mathbb{R}^n\times\mathbb{R}^n}f(x,v)e^{-i\xi\cdot x}dxdv
	=\widehat f(\xi,0).
\end{equation*}
It is to be emphasized that this simple formula has important consequences. In particular, it implies that, in order to evaluate the regularity of $\tilde f$, one only needs to control the decay of the trace of $\widehat f$ on $\{\eta=0\}$.

Next, recall that a tempered distribution $m(\xi,\eta)\in\mathcal{S}'(\mathbb{R}^n_\xi\times\mathbb{R}^n_\eta)$ defines a Fourier multiplier $m(D_x,D_v)$ on $L^p(\mathbb{R}^n_x;L^q(\mathbb{R}^n_v))$, for some $1\leq p,q\leq\infty$, if the mapping $m(D_x,D_v)$ given by
\begin{equation*}
	\mathcal F(m(D_x,D_v)f)=m(\xi,\eta)\widehat f(\xi,\eta),
\end{equation*}
for any $f\in \mathcal{S}(\mathbb{R}^n_x\times\mathbb{R}^n_v)$ ($\mathcal{S}$ denotes the Schwartz space of rapidly decreasing functions), can be extended into a bounded operator over $L^p(\mathbb{R}^n_x;L^q(\mathbb{R}^n_v))$. In particular, setting $m(\xi,\eta)=\xi$, we obtain, with the notation $D_{x}=-i\nabla_{x}$,
\begin{equation*}
	\mathcal{F}(D_xf)=\xi \widehat f(\xi,\eta)=\mathcal{F}(-i\nabla_xf).
\end{equation*}
We also have that
\begin{equation*}
	\mathcal{F}(vf(x,v))=i\nabla_\eta \widehat f(\xi,\eta)=-D_\eta\widehat f(\xi,\eta).
\end{equation*}
All in all, we have established the formula
\begin{equation}\label{fourier:1}
	\mathcal{F}(v\cdot\nabla_x f)(\xi,\eta)=-\xi\cdot \nabla_\eta\widehat f,
\end{equation}
so that the transport equation \eqref{transport:1} can be recast in Fourier variables as
\begin{equation*}
	\xi\cdot\nabla_\eta \widehat f(\xi,\eta) = - \widehat g(\xi,\eta).
\end{equation*}

Recall that we are looking to extract information on $\widehat f(\xi,0)$ from the preceding equation. This amounts to finding the value of $F=\widehat f$ on some (Lagrangian) manifold $\{\eta=0\}$ knowing that $F$ satisfies some transport equation. The energy method, based on the next lemma, is well-suited to this question and we expect it to yield a deeper insight into more intricate geometries, as well. 

Here, we only mention that, in the article \cite{MR1042388}, P.\ G\'erard develops a phase space interpretation of averaging lemmas via second microlocalization tools and that the paper \cite{MR1135922} by P.\ G\'erard \& F.\ Golse
provides a geometrical framework,
whereas the paper \cite{MR1659189} tackles the case of complex-valued vector-fields.
We hope to pursue this line of research in connection with the energy method in subsequent works.

\begin{lem}
Let $m(\xi,\eta)$ be a tempered distribution on $\mathbb R^n\times \mathbb R^n$. Then, one has the commutator identity
\begin{equation}\label{commutator:1}
	\bigl[ m(D_{x}, D_{v}), v\cdot \nabla_{x}\bigr]=\mu(D_{x},D_{v}),
	\quad \text{where }\mu(\xi,\eta)=\xi\cdot \frac{\partial m}{\partial \eta}.
\end{equation}
\end{lem}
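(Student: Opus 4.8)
The plan is to prove the commutator identity \eqref{commutator:1} by working entirely on the Fourier side, where both operators become simple symbolic operations, and then recognizing that their commutator is again a Fourier multiplier whose symbol we can read off directly. The key observation is that the paper has already established in \eqref{fourier:1} that the transport operator $v\cdot\nabla_x$ acts on $\widehat f$ as the first-order differential operator $-\xi\cdot\nabla_\eta$ in the dual variables, while any multiplier $m(D_x,D_v)$ acts simply as multiplication by $m(\xi,\eta)$. Thus, conjugating by the Fourier transform turns the operator commutator $[m(D_x,D_v),v\cdot\nabla_x]$ into the commutator of a multiplication operator with a vector field on $\mathcal{S}(\mathbb{R}^n_\xi\times\mathbb{R}^n_\eta)$.

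First, I would fix $f\in\mathcal{S}(\mathbb{R}^n_x\times\mathbb{R}^n_v)$ and compute $\mathcal{F}\bigl([m(D_x,D_v),v\cdot\nabla_x]f\bigr)$ term by term. Using the multiplier definition, $\mathcal{F}(m(D_x,D_v)h)=m(\xi,\eta)\widehat h$ for any Schwartz $h$, together with \eqref{fourier:1}, the first term becomes
\begin{equation*}
	\mathcal{F}\bigl(m(D_x,D_v)(v\cdot\nabla_x f)\bigr)=m(\xi,\eta)\bigl(-\xi\cdot\nabla_\eta\widehat f\,\bigr),
\end{equation*}
while the second term becomes
\begin{equation*}
	\mathcal{F}\bigl(v\cdot\nabla_x(m(D_x,D_v)f)\bigr)=-\xi\cdot\nabla_\eta\bigl(m(\xi,\eta)\widehat f\,\bigr).
\end{equation*}
Subtracting these and expanding the derivative in the second expression via the Leibniz rule, the terms $m(\xi,\eta)\,\xi\cdot\nabla_\eta\widehat f$ cancel, leaving exactly
\begin{equation*}
	\mathcal{F}\bigl([m(D_x,D_v),v\cdot\nabla_x]f\bigr)=\Bigl(\xi\cdot\frac{\partial m}{\partial\eta}\Bigr)\widehat f,
\end{equation*}
which is precisely the statement that the commutator equals the multiplier with symbol $\mu(\xi,\eta)=\xi\cdot\partial_\eta m$.

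The only genuine subtlety—rather than a true obstacle—is justifying the Leibniz computation when $m$ is merely a tempered distribution rather than a smooth symbol. Here one interprets $\xi\cdot\nabla_\eta$ and multiplication by $m$ as operations on $\mathcal{S}'(\mathbb{R}^n_\xi\times\mathbb{R}^n_\eta)$, so that $\xi\cdot\partial_\eta m$ is understood in the distributional sense, and the identity is read as an equality of tempered distributions acting against the Schwartz function $\widehat f$. Since $\widehat f\in\mathcal{S}$ and differentiation and multiplication by the polynomial $\xi$ preserve the relevant pairings, the Leibniz rule holds distributionally and the cancellation is legitimate. I would state the identity as one between operators defined on $\mathcal{S}$ and verified in $\mathcal{S}'$, which is exactly the level of generality needed for the later multiplier-theoretic applications, and no boundedness or further regularity of $m$ is required at this stage.
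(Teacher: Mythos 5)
Your proof is correct, but it is not the route the paper's proof takes: the authors prove the lemma in one line by observing that $v\cdot\nabla_x$ has the polynomial symbol $iv\cdot\xi$, of degree $2$ with vanishing diagonal Hessian, so that the symbol of the commutator with $m(D_x,D_v)$ is \emph{exactly} the Poisson bracket $\frac1i\{m,iv\cdot\xi\}=\xi\cdot\partial_\eta m$ (citing the composition formula from the pseudo-differential calculus). What you do instead---conjugating by the Fourier transform, using \eqref{fourier:1} to write $v\cdot\nabla_x$ as $-\xi\cdot\nabla_\eta$ on the Fourier side, and cancelling via the Leibniz rule---is precisely the ``alternative elementary justification'' that the authors themselves record in the remark immediately following the lemma (their identity \eqref{commutator:2}, obtained by integrating by parts in $\eta$ in the Fourier inversion formula). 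The trade-off is the one the authors point out: the Poisson-bracket argument is coordinate-free, geometric, and generalizes immediately to other vector fields with degree-$\le 2$ symbols, whereas your computation is coordinate-dependent but entirely self-contained and requires no pseudo-differential machinery. Your handling of the distributional subtlety is also adequate: since $\widehat f\in\mathcal S$, the product $m\widehat f$ is a well-defined tempered distribution, the operator $\xi\cdot\nabla_\eta$ (first-order with polynomial coefficients) preserves $\mathcal S'$, and the Leibniz rule for the product of a distribution with a smooth slowly increasing function is legitimate, so the identity holds as an equality of operators from $\mathcal S$ to $\mathcal S'$, which is all that is needed later.
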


\begin{proof}
Since the operator $v\cdot \nabla_{x}$ has the symbol $i v\cdot \xi$, which is a polynomial with degree $2$ whose Hessian has a null diagonal, the symbol of $\bigl[v\cdot \nabla_{x}, m(D_{x}, D_{v})\bigr]$ is exactly the Poisson bracket (see \cite[Corollary 1.1.22]{MR2599384})
$$
\frac1{i}\bigl\{m(\xi,\eta),i v\cdot \xi\bigr\}= \xi\cdot\frac{\partial m}{\partial \eta}(\xi,\eta),
$$
which is the sought result.
\end{proof}

\begin{rema}
	The preceding proof relies on the use of Poisson brackets, an important device with a geometrical content. For the convenience of readers who might not be familiar with such geometric tools, we provide here an alternative elementary justification of the preceding lemma, which should be more accessible. However, it should be noted that this approach is coordinate-dependent and, therefore, less robust.
	
	More precisely, for any $m(\xi,\eta)\in\mathcal{S}'(\mathbb{R}^n_\xi\times\mathbb{R}^n_\eta)$ and $f(x,v)\in \mathcal{S}(\mathbb{R}^n_x\times\mathbb{R}^n_v)$, we compute that
\begin{equation}\label{commutator:2}
	\begin{aligned}
		v\cdot\nabla_x\big[m(D_x,D_v)f(x,v)\big]
		&= \frac{1}{(2\pi)^{2n}}
		\iint_{\mathbb{R}^n\times\mathbb{R}^n}m(\xi,\eta)\widehat f(\xi,\eta)iv\cdot\xi e^{i(x\cdot\xi+v\cdot\eta)}d\xi d\eta
		\\
		&= \frac{1}{(2\pi)^{2n}}
		\iint_{\mathbb{R}^n\times\mathbb{R}^n}m(\xi,\eta)\widehat f(\xi,\eta)\xi\cdot\nabla_\eta e^{i(x\cdot\xi+v\cdot\eta)}d\xi d\eta
		\\
		&= \frac{1}{(2\pi)^{2n}}
		\iint_{\mathbb{R}^n\times\mathbb{R}^n}m(\xi,\eta)\xi\cdot \nabla_\eta\Big[\widehat f(\xi,\eta) e^{i(x\cdot\xi+v\cdot\eta)}\Big]d\xi d\eta
		\\
		&\quad - \frac{1}{(2\pi)^{2n}}
		\iint_{\mathbb{R}^n\times\mathbb{R}^n}m(\xi,\eta) \big[\xi\cdot \nabla_\eta\widehat f(\xi,\eta)\big] e^{i(x\cdot\xi+v\cdot\eta)}d\xi d\eta
		\\
		&= \frac{-1}{(2\pi)^{2n}}
		\iint_{\mathbb{R}^n\times\mathbb{R}^n}\big[\xi\cdot \nabla_\eta m(\xi,\eta)\big]\widehat f(\xi,\eta) e^{i(x\cdot\xi+v\cdot\eta)}d\xi d\eta
		\\
		&\quad +m(D_x,D_v)\big[v\cdot\nabla_xf(x,v)\big],
	\end{aligned}
\end{equation}
where we have used \eqref{fourier:1} in the last step.
This useful identity is a characterization of the commutator between the operators $v\cdot\nabla_x$ and $m(D_x,D_v)$.
It is only a reformulation of \eqref{commutator:1}.
\end{rema}

\begin{rema}
	In the case of functions $f(t,x,v)$, with $(t,x,v)\in\mathbb{R}\times\mathbb{R}^n\times\mathbb{R}^n$, depending on time $t$, it is readily seen that the commutator identity \eqref{commutator:1} can be adapted as
	\begin{equation*}
		\bigl[ m(D_t,D_{x}, D_{v}), \partial_t+v\cdot \nabla_{x}\bigr]=\mu(D_t,D_{x},D_{v}),
		\quad \text{where }\mu(\tau,\xi,\eta)=\xi\cdot \frac{\partial m}{\partial \eta}.
	\end{equation*}
	This identity provides then the basis to extend the results from this article to the time-dependent transport equation \eqref{transport:2}.
\end{rema}

From now on, we assume that the tempered distribution $m(\xi,\eta)$ is in fact a real-valued locally integrable function, which implies that $m(D_x,D_v)$ is a self-adjoint operator.
On the other hand, the vector field $v\cdot \nabla_{x}$ has null divergence and thus is skew-adjoint. As a result, employing the previous lemma, for any $f$ in the Schwartz space of $\mathbb R^{2n}$, we deduce the crucial energy identity
\begin{equation}\label{energy:1}
	\begin{aligned}
		2\operatorname{Re}\langle m(D_{x},D_{v}) f,v\cdot \nabla_{x}f\rangle_{L^2(\mathbb R^{2n})}
		&=\langle\bigl[m(D_{x},D_{v}) ,v\cdot \nabla_{x}\bigr] f, f\rangle_{L^2(\mathbb R^{2n})}
		\\
		&=\langle \mu(D_{x},D_{v}) f, f\rangle_{L^2(\mathbb R^{2n})}
		\\
		&=\iint_{\mathbb{R}^n\times\mathbb{R}^n} \xi\cdot \nabla_{\eta} m(\xi,\eta)\vert\widehat f(\xi,\eta)\vert^2 d\xi d\eta(2\pi)^{-2n}.
	\end{aligned}
\end{equation}
This property can be extended to any $f$ such that
$m(D_{x},D_{v}) f$ and $v\cdot \nabla_{x}f$ both belong to $L^2(\mathbb R^{2n})$.
Alternatively, the same identity can be obtained by multiplying \eqref{commutator:2} by $\overline{f(x,v)}$ and integrating in all variables.

The rule of the game now consists in finding a real-valued multiplier $m(\xi,\eta)$ which will allow us to deduce useful information on the velocity average $\tilde f(x)$, or equivalently on $\widehat f(\xi,0)$, from the energy identity \eqref{energy:1}.
\par
The following velocity averaging lemma is a simple and direct consequence of the energy identity for suitable choices of $m(\xi,\eta)$. It is the first main result of this work and is an extension of the classical velocity averaging lemmas from \cite{glps88}.

\begin{thm}\label{result:1}
	Let $f\in L^p(\mathbb{R}^n_x\times\mathbb{R}^n_v)$, with $p\in(1,\infty)$, be compactly supported in $v$ and such that $v\cdot\nabla_x f\in L^{p'}(\mathbb{R}^n_x\times\mathbb{R}^n_v)$. Then $\tilde f$ belongs to $\dot H^\frac 12(\mathbb{R}^n_x)$.
	
	More precisely, for any $p\in(1,\infty)$ and any compact set $K\subset\mathbb{R}^n_v$, there exists $C_{p,K}>0$ such that, for all $f(x,v)$ vanishing for $v$ outside $K$,
	\begin{equation*}
		\|\tilde f\|_{\dot H^\frac 12_x}^2\leq
		C_{p,K}\|f\|_{L^p_{x,v}}
		\|v\cdot\nabla_x f\|_{L^{p'}_{x,v}}.
	\end{equation*}
\end{thm}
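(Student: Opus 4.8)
The plan is to feed a single, carefully chosen real multiplier into the energy identity \eqref{energy:1} and to read off the $\dot H^\frac12$ norm of $\tilde f$ from the right-hand side. Since the regularity of $\tilde f$ is encoded in the trace $\widehat f(\xi,0)=\widehat{\tilde f}(\xi)$, while \eqref{energy:1} only ever produces bulk integrals of $\xi\cdot\nabla_\eta m\,|\widehat f|^2$, the multiplier must be chosen so that $\xi\cdot\nabla_\eta m$ concentrates near $\{\eta=0\}$ with the weight $|\xi|$. The cleanest choice is $m(\xi,\eta)=\frac12\sign(\xi\cdot\eta)$, which is real-valued, so that $m(D_x,D_v)$ is self-adjoint as \eqref{energy:1} requires. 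A distributional computation gives $\xi\cdot\nabla_\eta m=|\xi|^2\delta(\xi\cdot\eta)=|\xi|\,\delta(\hat\xi\cdot\eta)$, with $\hat\xi=\xi/|\xi|$, so that \eqref{energy:1} becomes
\begin{equation*}
	2\operatorname{Re}\langle m(D_x,D_v)f,\,v\cdot\nabla_x f\rangle_{L^2}=(2\pi)^{-2n}\int_{\mathbb R^n}|\xi|\Big(\int_{\eta\cdot\xi=0}|\widehat f(\xi,\eta)|^2\,d\sigma(\eta)\Big)d\xi.
\end{equation*}

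I would then bound the two sides separately. For the left-hand side, H\"older's inequality gives $2|\langle m(D_x,D_v)f,v\cdot\nabla_x f\rangle|\le 2\|m(D_x,D_v)f\|_{L^p}\|v\cdot\nabla_x f\|_{L^{p'}}$, so the whole estimate reduces to the boundedness of $m(D_x,D_v)$ on $L^p(\mathbb R^{2n})$. For the right-hand side, I would recover the trace from the hyperplane integral using the compact support in $v$: writing $\widehat f(\xi,\cdot)$ as the $v$-Fourier transform of the partial transform $\hat f(\xi,v)$ (in $x$ only), the restriction of $\widehat f(\xi,\cdot)$ to $\{\eta\perp\xi\}$ is the $(n-1)$-dimensional Fourier transform of the marginal $v_\perp\mapsto\int \hat f(\xi,v_\parallel,v_\perp)\,dv_\parallel$, which is supported in the projection of $K$ onto $\hat\xi^\perp$. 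Plancherel in $v_\perp$ together with Cauchy--Schwarz against this compactly supported marginal yields, uniformly in $\xi$,
\begin{equation*}
	|\widehat f(\xi,0)|^2\le \frac{|K_\perp|}{(2\pi)^{n-1}}\int_{\eta\cdot\xi=0}|\widehat f(\xi,\eta)|^2\,d\sigma(\eta),
\end{equation*}
where $|K_\perp|\le C_K$ bounds the measure of the projections of $K$. Since $\|\tilde f\|_{\dot H^\frac12}^2=(2\pi)^{-n}\int|\xi|\,|\widehat f(\xi,0)|^2\,d\xi$, combining the last two displays closes the argument and gives $\|\tilde f\|_{\dot H^\frac12}^2\le C_{p,K}\|f\|_{L^p}\|v\cdot\nabla_x f\|_{L^{p'}}$. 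It is this step that explains why compact support in $v$ is needed.

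Two points remain. First, \eqref{energy:1} is derived for Schwartz data, so I would run the whole argument on a sequence $f_k\in\mathcal S$ compactly supported in a fixed neighbourhood of $K$ with $f_k\to f$ in $L^p$ and $v\cdot\nabla_x f_k\to v\cdot\nabla_x f$ in $L^{p'}$, and then pass to the limit: the multiplier bound makes the left-hand side converge, the energy identity forces convergence of the hyperplane integrals, and the trace inequality is stable under this limit. The distributional identity $\xi\cdot\nabla_\eta\sign(\xi\cdot\eta)=2|\xi|^2\delta(\xi\cdot\eta)$ should likewise be justified by regularizing $\sign$ and letting the regularization vanish.

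The main obstacle is the boundedness of $m(D_x,D_v)=\frac12\sign(D_x\cdot D_v)$ on $L^p(\mathbb R^{2n})$ for every $p\in(1,\infty)$. This symbol is homogeneous of degree zero but jumps across the cone $\{\xi\cdot\eta=0\}$, so it fails the Mikhlin--H\"ormander smoothness hypotheses; and the jump cannot be avoided, since it is precisely what produces the concentrating $\delta(\xi\cdot\eta)$, whereas any smooth multiplier yields an integrable, non-concentrating weight that controls a norm of $f$ rather than the trace of $\tilde f$. I would attack this boundedness through the rotation $(u,w)=\frac1{\sqrt2}(\xi+\eta,\xi-\eta)$, under which the symbol becomes $\sign(|u|^2-|w|^2)$, i.e.\ (up to the identity) the characteristic function of $\{|u|>|w|\}$, whose boundary is a cone and hence flat along its rulings, inviting directional rather than curvature-based methods. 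Equivalently, $m(D_x,D_v)$ is the Hilbert transform in $v$ along the direction $D_x/|D_x|$, and I expect the proof of its $L^p$-boundedness, uniform in this rough dependence of the direction on the $x$-frequency, to be the technical heart of the theorem.
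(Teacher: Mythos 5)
Your overall architecture (the energy identity \eqref{energy:1}, a sign-type multiplier whose $\eta$-derivative produces a surface measure carrying the weight $|\xi|$, recovery of the trace at $\eta=0$ from the compact $v$-support, approximation by Schwartz functions) is the same as the paper's, and your distributional computation and trace inequality are fine in themselves. The fatal problem is the step you yourself defer as the ``technical heart'': the multiplier $m(\xi,\eta)=\tfrac12\operatorname{sign}(\xi\cdot\eta)$ is \emph{not} bounded on $L^p(\mathbb{R}^{2n})$ for $n\geq2$ and $p\neq2$, so the left-hand side of your energy identity cannot be estimated by $\|f\|_{L^p}\|v\cdot\nabla_xf\|_{L^{p'}}$. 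To see this, use your own rotation: $\operatorname{sign}(\xi\cdot\eta)$ becomes $\operatorname{sign}(|u|^2-|w|^2)$, and multiplier norms are invariant under orthogonal changes of variables. Restricting this symbol to the affine slice $\{w=w^0\}$ with $|w^0|=1$ (de Leeuw's restriction theorem applies, since after the harmless modification of setting the symbol to $0$ on the cone it is regulated at every point of that slice) would make $\operatorname{sign}(|u|^2-1)=1-2\chi_{\bar B(0,1)}(u)$ a bounded multiplier on $L^p(\mathbb{R}^n)$, contradicting Fefferman's ball multiplier theorem for $n\geq2$, $p\neq2$. Equivalently, freezing all but one conjugate pair of coordinates leaves $\operatorname{sign}(\xi_1\eta_1+c)$ with $c\neq0$, whose discontinuity set is a genuinely curved hyperbola, to which the Besicovitch--Kakeya counterexample applies. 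So the jump across the cone $\{\xi\cdot\eta=0\}$ --- which you correctly identify as unavoidable if one wants the concentrating $\delta$ on that set --- is exactly what destroys $L^p$-boundedness for $p\neq2$ once $n\geq2$; the directional-Hilbert-transform reformulation does not rescue it, since the direction depends on the frequency only measurably/homogeneously and no positive result of that kind is available (nor can it be, by the above). For $n=1$ your symbol reduces to $\tfrac12\operatorname{sign}\xi\,\operatorname{sign}\eta$ and the argument does work.

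The paper escapes this tension by not asking for concentration on all of $\{\eta=0\}$ (or $\{\xi\cdot\eta=0\}$) at once: it takes $m_j(\xi,\eta)=\operatorname{sign}\xi_j\times\operatorname{sign}\eta_j$, a tensor product of one-dimensional Hilbert transforms $H_{x_j}\otimes H_{v_j}$, which is trivially bounded on every $L^p(\mathbb{R}^{2n})$, $1<p<\infty$. Then $\xi_j\partial_{\eta_j}m_j=2|\xi_j|\,\delta_0(\eta_j)$ only restricts to the hyperplane $\{\eta_j=0\}$, i.e.\ controls the partial average $\tilde f_j=\int f\,dv_j$; the compact support in $v$ is used a second time (estimate \eqref{support:1}) to pass from $\tilde f_j$ to the full average $\tilde f$, and summing over $j$ with $|\xi|\leq\sum_j|\xi_j|$ recovers $\||D_x|^{1/2}\tilde f\|_{L^2}$. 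If you want to salvage your argument, this coordinate-wise splitting is the fix.
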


\begin{proof}
	We provide in the appendix a few approximation lemmas allowing us to reduce the proofs of velocity averaging results to estimates for smooth compactly supported functions only. More specifically, here, we can use either Lemma \ref{approximation:3} or Lemma \ref{approximation:4} from the appendix to deduce that we only need to show the present theorem for smooth compactly supported functions. Thus, we assume now that $f(x,v)$ belongs to the Schwartz space of rapidly decreasing functions.
	
	\emph{We start with the case $n=1$.} The one-dimensional version of the energy identity \eqref{energy:1} reads
	\begin{equation}\label{energy:6}
		\iint_{\mathbb{R}\times\mathbb{R}}\xi \partial_\eta m_0(\xi,\eta)\big|\widehat f(\xi,\eta)\big|^2 d\xi d\eta
		=8\pi^2\operatorname{Re} \langle v\partial_xf,m_0(D_x,D_v)f\rangle_{L^2_{x,v}}.
	\end{equation}
	Then, choosing
	\begin{equation*}
		m_0(\xi,\eta)=\operatorname{sign}\xi\times\operatorname{sign}\eta
	\end{equation*}
	yields the identity
	\begin{equation}\label{energy:2}
		\|\tilde f\|_{\dot H^\frac 12(\mathbb{R})}^2=
		\frac 1{2\pi}\int_{\mathbb{R}}|\xi| \big|\widehat f(\xi,0)\big|^2 d\xi
		=2\pi\operatorname{Re} \langle v\partial_xf,m_0(D_x,D_v)f\rangle_{L^2_{x,v}}.
	\end{equation}
	
	Observe next that the operator $m_0(D_x,D_v)$ is a classical tensor product of Calder\'on--Zygmund singular integrals. More precisely, it is well-known that the Fourier multiplier operator produced by the function $\operatorname{sign}(\xi)$ is the Hilbert transform defined by
	\begin{equation}\label{hilbert}
		Hh(x)=\frac i\pi\operatorname{\mbox{p.v.}}\int_{\mathbb{R}}\frac 1{x-y}h(y)dy
		=\mathcal{F}^{-1}\big[(\operatorname{sign}\xi) \widehat h(\xi)\big](x),
	\end{equation}
	for any $h\in\mathcal{S}(\mathbb{R})$, where the singular integral is defined in the sense of Cauchy's principal value. We refer to \cite[Section 4.1]{g14} or \cite[Section 5.1]{hvvl16} for more details on the Hilbert transform and its properties. It follows that
	\begin{equation*}
		m_0(D_x,D_v)=H_x\otimes H_v,
	\end{equation*}
	where we used indices to distinguish the variable being targeted by the Hilbert transform.
	
	Now, since the Hilbert transform is bounded on all $L^p(\mathbb{R})$, for $1<p<\infty$, we deduce that $m_0(D_x,D_v)$ is bounded on $L^p(\mathbb{R}^2)$, as well. Therefore, we obtain from \eqref{energy:2} that
	\begin{equation*}
		\|\tilde f\|_{\dot H^\frac 12_x}^2
		=2\pi\operatorname{Re} \langle v\partial_xf,H_x\otimes H_vf\rangle_{L^2_{x,v}}
		\leq 2\pi C_p^2\|v\partial_xf\|_{L^{p'}_{x,v}}\|f\|_{L^{p}_{x,v}},
	\end{equation*}
	where $C_p>0$ is the operator norm of the Hilbert transform\footnote{It can be shown that $C_p=\tan\frac\pi{2p}$ for $1<p\leq 2$ and $C_p=\cot\frac\pi{2p}$ for $2\leq p<\infty$ (see Theorem 4.1.7 and the following remarks in \cite{g14}).} over $L^p(\mathbb{R})$, which concludes the one-dimensional proof.
	
	\emph{We handle now the case $n\geq 2$.} Replicating the one-dimensional strategy, we apply \eqref{energy:1} with
	\begin{equation*}
		m_1(\xi,\eta)=\operatorname{sign}\xi_1\times\operatorname{sign}\eta_1,
	\end{equation*}
	and we get
	\begin{equation}\label{energy:3}
		\iint_{(\xi,\eta')\in\mathbb{R}^n\times\mathbb{R}^{n-1}}|\xi_1|\big|\widehat f(\xi,0,\eta')\big|^2 d\xi d\eta'
		=(2\pi)^{2n}\operatorname{Re} \langle v\cdot\nabla_xf,m_1(D_x,D_v)f\rangle_{L^2_{x,v}},
	\end{equation}
	where $m_1(D_x,D_v)=H_{x_1}\otimes H_{v_1}$ is also a tensor product of Hilbert transforms, which is bounded over all $L^p(\mathbb{R}^n\times\mathbb{R}^n)$, for $1<p<\infty$.
	
	Next, further defining
	\begin{equation*}
		\tilde f_1(x,v')=\int_{\mathbb{R}}f(x,v_1,v')dv_1,
	\end{equation*}
	so that
	\begin{equation*}
		\widehat{\tilde f_1}(\xi,\eta')=
		\iint_{(x,v')\in\mathbb{R}^n\times\mathbb{R}^{n-1}}\left[\int_{\mathbb{R}}f(x,v_1,v')dv_1\right] e^{-i(\xi\cdot x+\eta'\cdot v')}dxdv'
		=\widehat f(\xi,0,\eta')
	\end{equation*}
	and
	\begin{equation*}
		|\xi_1|^\frac 12\widehat f(\xi,0,\eta')=|\xi_1|^\frac 12\widehat{\tilde f_1}(\xi,\eta')=\mathcal{F}\big(|D_{x_1}|^\frac 12 \tilde f_1\big)(\xi,\eta'),
	\end{equation*}
	we deduce from \eqref{energy:3} that
	\begin{equation}\label{energy:4}
		\begin{aligned}
			\big\||D_{x_1}|^\frac 12 \tilde f_1(x,v')\big\|_{L^2(\mathbb{R}^n\times\mathbb{R}^{n-1})}^2
			&=\frac 1{(2\pi)^{2n-1}}
			\iint_{\mathbb{R}^n\times\mathbb{R}^{n-1}}|\xi_1|\big|\widehat f(\xi,0,\eta')\big|^2 d\xi d\eta'
			\\
			&=2\pi\operatorname{Re} \langle v\cdot\nabla_xf,H_{x_1}\otimes H_{v_1}f\rangle_{L^2_{x,v}}
			\\
			&\leq 2\pi C_p^2\|v\cdot\nabla_xf\|_{L^{p'}_{x,v}}\|f\|_{L^{p}_{x,v}},
		\end{aligned}
	\end{equation}
	where $C_p>0$ is the operator norm of the Hilbert transform and only depends on $p$.
	
	Moreover, since the $v$-support of $f$ is compact and $|D_{x_1}|^\frac 12$ does not increase the support in the $v$-variable, we notice that
	\begin{equation}\label{support:1}
		\begin{aligned}
			\big\||D_{x_1}|^\frac 12\tilde f\big\|_{L^2_x}=
			\left\|\int_{\mathbb{R}^{n-1}}|D_{x_1}|^\frac 12\tilde f_1(x,v')dv'\right\|_{L^2_x}
			&\leq \int_{\mathbb{R}^{n-1}}\big\||D_{x_1}|^\frac 12\tilde f_1(x,v')\big\|_{L^2_x}dv'
			\\
			&\leq \sqrt{|\pi_{n-1}K|_{n-1}}\big\||D_{x_1}|^\frac 12 \tilde f_1(x,v')\big\|_{L^2_{x,v'}},
		\end{aligned}
	\end{equation}
	where $K$ is the compact set containing the $v$-support of $f$ and $\pi_{n-1}K$ stands for its orthogonal projection onto $\mathbb{R}^{n-1}_{v'}$.
	Thus, combining \eqref{energy:4} and \eqref{support:1}, we arrive at the estimate
	\begin{equation*}
		\big\||D_{x_1}|^\frac 12\tilde f\big\|_{L^2_x}^2
		\leq 2\pi C_p^2|\pi_{n-1}K|_{n-1}\|v\cdot\nabla_xf\|_{L^{p'}_{x,v}}\|f\|_{L^{p}_{x,v}}.
	\end{equation*}
	
	Finally, since there is nothing special about the first coordinate $x_1$, we may now repeat the preceding estimate for each remaining coordinate, which allows us to obtain
	\begin{equation*}
		\big\||D_{x}|^\frac 12\tilde f\big\|_{L^2_x}^2
		\leq
		\sum_{j=1}^n\big\||D_{x_j}|^\frac 12\tilde f\big\|_{L^2_x}^2
		\leq 2\pi C_p^2 \bigg(\sum_{j=1}^n|\pi_{n-1}^jK|_{n-1}\bigg)\|v\cdot\nabla_xf\|_{L^{p'}_{x,v}}\|f\|_{L^{p}_{x,v}},
	\end{equation*}
	where $\pi^j_{n-1}K$ now denotes the orthogonal projection of $K$ onto the coordinate hyperplane normal to the $v_j$-coordinate axis,
	thereby concluding the proof in any dimension.
\end{proof}

\begin{rema}
	The preprint \cite{jlt20} on arXiv
	displays a method for proving averaging lemmas which is very close to our approach
	and contains results which sometimes intersect with the contributions from this article. Specifically, Theorem \ref{result:1} above is the consequence of a basic implementation of the energy method. Since the ideas from \cite{jlt20} are based on a very similar approach (which the authors call the commutator method), it is not surprising to find a similar result therein. In fact, Theorem 1 from \cite{jlt20} (in the case $\varepsilon=0$) reaches the same conclusion as our Theorem \ref{result:1}. However, we note that Theorem 1 in \cite{jlt20} achieves a refined characterization of the velocity regularity of the densities, whereas our result only focuses on the regularity of velocity averages.
\end{rema}

\begin{rema}
	Summarizing the proof above, we note that we have used the energy identity \eqref{energy:1} in conjunction with the multiplier
	\begin{equation}\label{multiplier:1}
		M(\xi,\eta)=\sum_{1\leq j\leq n}\operatorname{sign}\xi_j\times\operatorname{sign}\eta_j,
	\end{equation}
	which is bounded on all $L^p(\mathbb{R}^{2n})$, with $1<p<\infty$. More precisely, denoting
	\begin{equation*}
		\tilde f_j(x,v_1,\ldots,v_{j-1},v_{j+1},\ldots,v_n)=\int_{\mathbb{R}}f(x,v)dv_j
	\end{equation*}
	and making repeated use of \eqref{support:1},
	the energy identity \eqref{energy:1} yields that
	\begin{equation}\label{energy:5}
		\begin{aligned}
			\big\||D_x|^\frac 12\tilde f\big\|_{L^2_x}^2
			&\leq
			\sum_{j=1}^n|\pi_{n-1}^jK|_{n-1}
			\big\||D_{x_j}|^\frac 12 \tilde f_j(x,v')\big\|_{L^2(\mathbb{R}^n\times\mathbb{R}^{n-1})}^2
			\\
			& \leq 2\pi \sup_{j=1,\ldots,n}|\pi_{n-1}^jK|_{n-1} \operatorname{Re} \langle v\cdot\nabla_xf,M(D_x,D_v)f\rangle_{L^2_{x,v}}.
		\end{aligned}
	\end{equation}
	It is then easy to conclude the proof employing the boundedness properties of $M(D_x,D_v)$.
\end{rema}

\begin{rema}
	Some comments about the endpoint cases of the preceding result are in order. As expected, the method of proof of Theorem \ref{result:1} fails for $p=1$ or $p=\infty$ due to the fact that singular integral operators (such as the Hilbert transform), and therefore Fourier multipliers, are generally not bounded over non-reflexive Lebesgue spaces. In fact, it would not be surprising to learn that the conclusion of Theorem \ref{result:1} fails to hold in these endpoint cases. A counterexample would certainly be interesting.
	
	It is in general possible, though, to obtain an endpoint result by substituting the Lebesgue spaces $L^1_{x,v}$ and $L^\infty_{x,v}$ by Hardy spaces $\mathfrak{H}^1$ and spaces of functions of bounded mean oscillation $\mathrm{BMO}$, respectively (we refer to \cite{g14:2} for a complete introduction to these spaces). However, some technical care is required.
	
	Indeed, the Hardy space $\mathfrak H^1(\mathbb{R}^n)$ is generally defined as the subspace of functions $f(x)$ in $L^1(\mathbb{R}^n)$ such that all Riesz transforms $(D_j/|D|)f(x)$, with $j=1,\ldots,n$, are integrable, as well. A fundamental feature of Hardy spaces is the fact that, even though Calder\'on--Zygmund singular integral operators are not bounded over $L^1(\mathbb{R}^n)$, they are bounded on $\mathfrak H^1(\mathbb{R}^n)$ (see \cite[Section 6.7.1]{g14:2}).\footnote{Standard pseudo-differential operators (of order $0$) are also bounded on $L^p$, with $1<p<\infty$, but not on $L^1$ (see \cite{t91} for some $L^p$ estimates). For the $\mathfrak H^1$ boundedness, some additional condition---yet to be unraveled---should be satisfied by the symbol. It seems possible to write down a decent sufficient condition. However, we must of course keep in mind that $\mathfrak H^1$ is not a local space and that, for instance, the integral of each element of $\mathfrak H^1$ must be $0$, which is a property that is in general ruined by the multiplication by a cutoff function.}
	Moreover, whereas $L^1(\mathbb{R}^n)$ is dual to no space, the Hardy space $\mathfrak H^1(\mathbb{R}^n)$ is dual to the space of functions with vanishing mean oscillation $\mathrm{VMO}(\mathbb{R}^n)$ (defined as the closure of continuous functions that vanish at infinity $C_0(\mathbb{R}^n)$ in the $\mathrm{BMO}(\mathbb{R}^n)$ norm, i.e., the dual norm on $\mathfrak H^1(\mathbb{R}^n)$; see \cite{g02} for a survey of related results). These remarkable properties of $\mathfrak H^1$ make this functional space more suitable than $L^1$ in many problems related to harmonic analysis. In particular, they have deep consequences in the analysis of partial differential equations, as illustrated, for instance, by the results from \cite{as11}, \cite{MR1225511}, \cite{em94} and \cite{m90}.
	
	Unfortunately, even though tensor products of classical singular integral operators are bounded over $L^p(\mathbb{R}^n)$, with $1<p<\infty$, they do not enjoy the same boundedness properties over $\mathfrak H^1(\mathbb{R}^n)$. Indeed, take a function $w(\xi,\eta)$ in the Schwartz space of $\mathbb{R}^2$ such that $w(0,0)=0$. Then, the Fourier transform $f(x,v)=\widehat w(x,v)$ belongs to $\mathfrak H^1\cap \mathcal{S}(\R^2)$. However, the coodinatewise Hilbert transform $H_x f(x,v)$ is not integrable (one may also consider $H_vf(x,v)$ or $H_x\otimes H_vf(x,v)$ for a similar argument), otherwise this would imply that $(\sign\xi) w(\xi,\eta)$ is continuous, since $\mathcal{F}L^1\subset C_0$ by the Riemann--Lebesgue lemma. This continuity would then force $w(0,\eta)=0$, for all $\eta\in\mathbb{R}$, which does not hold in general.\footnote{For pseudo-differential operators, tensor products are often poorly behaved: Taking two symbols $a(x,\xi)$ and $b(v,\eta)$ of order $0$, the first derivative of the product $a(x,\xi)b(v,\eta)$ will in general not decay as $(1+\val \xi+\val\eta)^{-1}$,
but will decay like $(1+\val \xi)^{-1}+(1+\val \eta)^{-1}$.}
	
	In conclusion, in order to deal with the boundedness of the Fourier multiplier $M(D_x,D_v)$ defined in \eqref{multiplier:1} over Hardy-type spaces and thereby extend Theorem \ref{result:1} to the endpoint settings $p=1$ and $p=\infty$, one is compelled to resort to the use of tools from harmonic analysis on product spaces (see \cite{f87} for such tools relevant to the double Hilbert transform). These methods have previously been used successfully in the context of velocity averaging lemmas by B\'ezard \cite{b94}\footnote{B\'ezard used a product Hardy space in $x$ which depends on the velocity variable $v$. More precisely, given any $v\in\mathbb{R}^n\setminus\{0\}$, he introduced the Hardy space $\mathfrak H^1\big(\mathbb{R}_v\times\mathbb{R}^{n-1}_{v^\perp}\big)$ based on the product structure $\mathbb{R}^n=\mathbb{R}v\oplus(\mathbb{R}v)^\perp$.} and it seems plausible that a similar approach based on product Hardy spaces applies in the context of Theorem \ref{result:1}. However, this is rather technical and, therefore, for the sake of simplicity, we will not be going into further detail on this topic.
\end{rema}

\begin{rema}
	The results presented here are global: global assumptions, global results. We want now to address the question of localization of Theorem \ref{result:1}.
	\par
	Let $\Omega$ be an open subset of $\mathbb{R}^n_x\times\mathbb{R}^n_v$. For all $\phi\in C_c^\infty(\Omega)$, we define
	\begin{equation*}
		\tilde f_\phi(x)=\int_{\mathbb{R}^n}f(x,v)\phi(x,v)dv.
	\end{equation*}
	Then, for any $f\in L^p_{\rm loc}(\Omega)$ and $v\cdot\nabla_x f\in L^{p'}_{\rm loc}(\Omega)$, with $2\leq p<\infty$, we have
	\begin{equation*}
		v\cdot\nabla_x(\phi f)=
		\underbrace{(v\cdot\nabla_x\phi)f}_{\in L^p_{\rm comp}\subset L^{p'}_{\rm comp}}
		+\underbrace{\phi v\cdot\nabla_x f}_{\in L^{p'}_{\rm comp}},
	\end{equation*}
	so that $v\cdot\nabla_x(\phi f)$ belongs to $L^{p'}$ and $\phi f\in L^p$. It therefore follows from Theorem \ref{result:1} that $\tilde f_\phi=\widetilde{\phi f}\in H^\frac 12$.
	
	Observe that a similar localization fails in the case $1<p<2$. Somehow, the duality principle stemming from the energy method cannot be localized when $1<p<2$. Of course, one may also raise the question of \emph{microlocalization} of Theorem \ref{result:1}, which would lead to the same type of constraint.
\end{rema}

Our next result is a simple generalization of Theorem \ref{result:1} to mixed Lebesgue spaces. Recall that the mixed Lebesgue spaces $L^p(\mathbb{R}^n_x;L^q(\mathbb{R}^n_v))$, with $1\leq p,q\leq \infty$, are defined as the Banach spaces of measurable functions $f(x,v)$ over $\mathbb{R}^n_x\times\mathbb{R}^n_v$ endowed with the norm
\begin{equation*}
	\|f\|_{L^p(\mathbb{R}^n_x;L^q(\mathbb{R}^n_v))}
	=\big\|\|f\|_{L^q(\mathbb{R}^n_v)}\big\|_{L^p(\mathbb{R}^n_x)}.
\end{equation*}
We refer to \cite{bp61} for a systematic treatment of mixed Lebesgue spaces including all their essential properties. In particular, when $1\leq p,q<\infty$, one has that smooth compactly supported functions are dense in $L^p(\mathbb{R}^n_x;L^q(\mathbb{R}^n_v))$ and it is shown in \cite{bp61} that dual spaces are given by
\begin{equation*}
	\big(L^p(\mathbb{R}^n_x;L^q(\mathbb{R}^n_v))\big)'
	=
	L^{p'}(\mathbb{R}^n_x;L^{q'}(\mathbb{R}^n_v)),
\end{equation*}
for the range of parameters $1\leq p,q<\infty$, with the usual identification of linear forms in the $L^2_{x,v}$ inner product.

Furthermore, whenever $1\leq p,q<\infty$, the mixed Lebesgues spaces $L^p(\mathbb{R}^n_x;L^q(\mathbb{R}^n_v))$ can also be interpreted as Banach-space-valued Lebesgue spaces (or Bochner spaces), i.e., spaces of $L^p(\mathbb{R}^n_x;X)$ functions taking values in $X=L^q(\mathbb{R}^n_v)$. We refer the reader to \cite[Chapter 1]{hvvl16} for a complete introduction to Bochner spaces (see Proposition 1.2.25 therein for the identification of vector-valued functions with functions of two variables).

This interpretation is particularly important when dealing with the boundedness of Fourier multipliers and singular integral operators on vector-valued functions.
More precisely, we will make repeated use of the fact that the Hilbert transform \eqref{hilbert} defines an operator on vector-valued functions $h(x)\in C^1_c(\mathbb{R};X)$ that extends into a bounded operator over $L^p(\mathbb{R};X)$, for any given $1<p<\infty$, if and only if $X$ is a Banach space with the property of unconditional martingale differences (the UMD property; see \cite[Section 5.1]{hvvl16}). Fortunately, it can be shown that reflexive mixed Lebesgue spaces (i.e., mixed Lebesgue spaces with all integrability parameters contained strictly between $1$ and $\infty$) have the UMD property (see \cite[Proposition 4.2.15]{hvvl16}).

All in all, observe that the boundedness properties of vector-valued Hilbert transforms allow us to deduce that the Fourier multiplier operator \eqref{multiplier:1} is bounded over all mixed Lebesgue spaces $L^p(\mathbb{R}^n_x;L^q(\mathbb{R}^n_v))$ and $L^p(\mathbb{R}^n_v;L^q(\mathbb{R}^n_x))$, with $1< p,q< \infty$. This crucial observation leads to an extension of the duality principle from Theorem \ref{result:1} to mixed Lebesgue spaces, which is the content of the next result.

\begin{thm}\label{result:2}
	Let $f\in L^p(\mathbb{R}^n_x;L^q(\mathbb{R}^n_v))$, with $p,q\in(1,\infty)$, be compactly supported (in all variables) and such that $v\cdot\nabla_x f\in L^{p'}(\mathbb{R}^n_x;L^{q'}(\mathbb{R}^n_v))$. Then $\tilde f$ belongs to $\dot H^\frac 12(\mathbb{R}^n_x)$.
	
	More precisely, for any $p,q\in(1,\infty)$ and any compact set $K\subset\mathbb{R}^n_v$, there exists $C_{p,q,K}>0$ such that, for all compactly supported $f(x,v)$ vanishing for $v$ outside $K$,
	\begin{equation*}
		\|\tilde f\|_{\dot H^\frac 12_x}^2\leq
		C_{p,q,K}\|f\|_{L^p_{x}L^q_v}
		\|v\cdot\nabla_x f\|_{L^{p'}_{x}L^{q'}_v}.
	\end{equation*}
	In particular, the constant $C_{p,q,K}$ is independent of the size of the support of $f(x,v)$ in $x$. Moreover, whenever $\frac qp>\frac{n-1}n$, the above inequality remains true for functions that are compactly supported in $v$ but not necessarily in $x$.
\end{thm}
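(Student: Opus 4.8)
The plan is to reproduce the energy argument of Theorem~\ref{result:1} in the Bochner (vector-valued) setting. First I would use the approximation lemmas from the appendix to reduce to $f\in\mathcal S(\mathbb{R}^{2n})$, so that the energy identity \eqref{energy:1} and its coordinate-wise consequence leading to \eqref{energy:5} hold verbatim; note that the derivation of \eqref{energy:5}, and in particular the support estimate \eqref{support:1}, uses only the compactness of the velocity support and is insensitive to the $x$-support. The sole new ingredient is that the duality pairing on the right of \eqref{energy:5} must now be estimated by H\"older's inequality in mixed Lebesgue spaces,
\[
\mathrm{Re}\,\langle v\cdot\nabla_x f, M(D_x,D_v)f\rangle_{L^2_{x,v}}\le \|v\cdot\nabla_x f\|_{L^{p'}_xL^{q'}_v}\,\|M(D_x,D_v)f\|_{L^{p}_xL^{q}_v},
\]
after which I would invoke the boundedness of the multiplier \eqref{multiplier:1} on $L^p_xL^q_v$, established just before the statement through the UMD property of reflexive mixed Lebesgue spaces. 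This produces the asserted inequality with a constant $C_{p,q,K}$ built only from the operator norm of $M(D_x,D_v)$ on $L^p_xL^q_v$ (hence depending on $p,q$) and from the $(n-1)$-dimensional measures $|\pi^j_{n-1}K|$ (hence depending on $K$). Since neither quantity sees the $x$-support of $f$, the constant is automatically independent of the size of that support, which is the second assertion and the key that unlocks the final extension.

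For the extension to $f$ compactly supported in $v$ only, the plan is a truncation-and-limit argument: fix a cutoff $\chi\in C_c^\infty(\mathbb{R}^n)$ equal to $1$ near the origin, set $f_R(x,v)=\chi(x/R)f(x,v)$, apply the uniform estimate just proved to $f_R$, and let $R\to\infty$. Because $\chi(x/R)$ is independent of $v$, one has $\widetilde{f_R}=\chi(\cdot/R)\tilde f$ and $\|f_R\|_{L^p_xL^q_v}\le\|f\|_{L^p_xL^q_v}$, so the whole difficulty is concentrated in the transport term $v\cdot\nabla_x f_R=\chi(\cdot/R)\,(v\cdot\nabla_x f)+\big(v\cdot\nabla_x\chi(\cdot/R)\big)f$: the first summand converges to $v\cdot\nabla_x f$ in $L^{p'}_xL^{q'}_v$ by dominated convergence, while the commutator term $E_R:=\big(v\cdot\nabla_x\chi(\cdot/R)\big)f$ is the genuine error.

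Controlling $E_R$ is the main obstacle. Here $\nabla_x\chi(\cdot/R)$ is of size $R^{-1}$ but supported on a shell of volume $\sim R^n$, while $f$ is controlled only in $L^p_xL^q_v$ and the error must be measured in the \emph{dual} scale $L^{p'}_xL^{q'}_v$. Trading integrabilities by H\"older's inequality on the compact velocity support and on the shell produces a power of $R$ whose sign is governed by the mismatch between the primal exponents $(p,q)$ and their conjugates; balancing the $R^{-1}$ gain of the cutoff gradient against the shell volume is exactly where the hypothesis $q/p>(n-1)/n$ enters, ensuring $\|E_R\|_{L^{p'}_xL^{q'}_v}\to 0$ (one may also exploit $\mathrm{div}_x v=0$ to rewrite $E_R$ as a divergence, which I would try in order to sharpen the bookkeeping). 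Granting this, $\|\widetilde{f_R}\|_{\dot H^{1/2}_x}$ is bounded uniformly in $R$ while $\widetilde{f_R}=\chi(\cdot/R)\tilde f\to\tilde f$ in the sense of distributions; weak-$*$ compactness in $\dot H^{1/2}$ and lower semicontinuity of the homogeneous seminorm then identify the limit as $\tilde f$ and deliver the stated inequality. The delicate point throughout is this exponent accounting, since below the threshold $q/p=(n-1)/n$ the truncation genuinely loses and a different device would be required.
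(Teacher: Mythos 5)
Your first two paragraphs reproduce the paper's argument for the main inequality essentially verbatim: reduction to smooth compactly supported functions via the appendix (Lemma \ref{approximation:5}), the coordinate-wise energy identity \eqref{energy:5}, H\"older in the mixed-norm duality pairing, and the UMD-based boundedness of $M(D_x,D_v)$ on $L^p_xL^q_v$, with a constant depending only on $p,q$ and the projections of $K$. That part is correct and is exactly what the paper does.

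The gap is in the final extension to functions not compactly supported in $x$. Your isotropic truncation $f_R=\chi(x/R)f$ cannot produce the threshold $\frac qp>\frac{n-1}{n}$. The error term is
\begin{equation*}
E_R=\tfrac1R\,(v\cdot\nabla\chi)(x/R)\,f ,\qquad
\|E_R\|_{L^{p'}_xL^{q'}_v}\le \tfrac{C_K}{R}\,\big\|\mathds{1}_{\{R\le|x|\le2R\}}\,\|f(x,\cdot)\|_{L^{q'}_v}\big\|_{L^{p'}_x},
\end{equation*}
and to dominate this by $\|f\|_{L^p_xL^q_v}$ via H\"older you need $q'\le q$ on the compact set $K$ and $p'\le p$ on the shell, i.e.\ $p,q\ge2$; for $p<2$ or $q<2$ the inequality goes the wrong way and there is no bound at all. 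Even when $p,q\ge2$, the shell has volume $\sim R^n$, so the best you get is $R^{\,n(1-2/p)-1}$, which tends to $0$ only for $p<\frac{2n}{n-1}$ --- a condition neither implied by nor implying $\frac qp>\frac{n-1}{n}$ (take $n=2$, $p=q=10$: the theorem's hypothesis holds but your exponent is $+\frac35$). Rewriting $E_R$ as a divergence only undoes the product rule and does not help. The paper's route (Lemmas \ref{approximation:3} and \ref{approximation:4}) is structurally different in two ways. First, it renormalizes, $f\mapsto f(1+\lambda^2f^2)^{-1/2}$, so the error is estimated through the pointwise bound $\lambda^{-1}$ and the \emph{measure} of the cutoff's support rather than through $\|f\|_{L^p_xL^q_v}$. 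Second, and decisively, Lemma \ref{approximation:4} inserts an anisotropic cutoff $\chi(\delta\pi_vx)$ in the component of $x$ orthogonal to $v$: this factor is exactly annihilated by $v\cdot\nabla_x$, and for $|x|\sim\epsilon^{-1}$ it confines $v\in K$ to a cap of measure $\sim(\delta|x|)^{-(n-1)}$, producing the gain $\epsilon^{(n-1)/q_1}$ that yields the condition $1+\frac{n-1}{q_1}-\frac n{p_1}>0$, i.e.\ \eqref{constraint:2}, and hence the stated threshold; the parameters are then coupled, e.g.\ $\delta=\lambda=|\log\epsilon|^{-1}$. Without the renormalization and the $v$-dependent geometry of the cutoff, the exponent accounting you describe cannot close.
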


\begin{proof}
	Thanks to Lemma \ref{approximation:5} from the appendix, observe that we only need to consider smooth compactly supported functions. Furthermore, if $\frac qp>\frac{n-1}n$, it is readily seen that both constraints \eqref{constraint:1} and \eqref{constraint:2} hold, which implies that the theorem extends to functions that are not necessarily compactly supported in $x$ by Lemmas \ref{approximation:3} and \ref{approximation:4}. Either way, we only need now to consider functions $f(x,v)\in C_c^\infty(\mathbb{R}^n\times\mathbb{R}^n)$.
	
	We use the method of proof of Theorem \ref{result:1}. Thus, we observe that the energy identity \eqref{energy:5} yields that
	\begin{equation*}
		\begin{aligned}
			\big\||D_x|^\frac 12\tilde f\big\|_{L^2_x}^2
			& \leq
			2\pi \sup_{j=1,\ldots,n}|\pi_{n-1}^jK|_{n-1}\operatorname{Re} \langle v\cdot\nabla_xf,M(D_x,D_v)f\rangle_{L^2_{x,v}}
			\\
			&\leq 2\pi \sup_{j=1,\ldots,n}|\pi_{n-1}^jK|_{n-1}\|v\cdot\nabla_xf\|_{L^{p'}_xL^{q'}_v}\|M(D_x,D_v)f\|_{L^{p}_xL^q_v}.
		\end{aligned}
	\end{equation*}
	Then, further employing the previously discussed boundedness properties of the singular integral operator $M(D_x,D_v)$ over mixed Lebesgue spaces, we find that
	\begin{equation*}
		\big\||D_x|^\frac 12\tilde f\big\|_{L^2_x}^2
		\leq C_{p,q,K}\|v\cdot\nabla_xf\|_{L^{p'}_xL^{q'}_v}\|f\|_{L^{p}_xL^q_v},
	\end{equation*}
	for some $C_{p,q,K}>0$, which concludes the proof.
\end{proof}

\begin{rema}
	As before, we note that the conclusions of Theorem \ref{result:2} above intersect with the results from \cite{jlt20}. To be precise, Corollary 1 from \cite{jlt20} (in the case $\varepsilon=0$) provides a gain of almost half a derivative when the density and the source term lie in mixed Lebesgue spaces which are dual to each other, with some limitations on the integrability parameters. In contrast, our Theorem \ref{result:2} achieves a gain of exactly half a derivative in a similar setting and does not require any bounds on the integrability parameters.
\end{rema}

The reader will certainly have gathered by now that the duality principle embodied by Theorems \ref{result:1} and \ref{result:2} can be generalized to any suitable couple of dual functional spaces. For example, the velocity averaging result from Theorem \ref{result:2} also holds if one replaces the mixed Lebesgue spaces
\begin{equation*}
	L^p(\mathbb{R}^n_x;L^q(\mathbb{R}^n_v))
	\quad\text{and}\quad
	L^{p'}(\mathbb{R}^n_x;L^{q'}(\mathbb{R}^n_v))
\end{equation*}
by, respectively,
\begin{equation*}
	L^p(\mathbb{R}^n_v;L^q(\mathbb{R}^n_x))
	\quad\text{and}\quad
	L^{p'}(\mathbb{R}^n_v;L^{q'}(\mathbb{R}^n_x)).
\end{equation*}
The proof follows \emph{mutatis mutandis.}

Moreover, using the Bessel potentials
\begin{equation*}
	\begin{aligned}
		(1-\Delta_x)^\frac a2 & =(1+|D_x|^2)^\frac a2,
		\\
		(1-\Delta_v)^\frac \alpha 2 & =(1+|D_v|^2)^\frac \alpha 2,
	\end{aligned}
\end{equation*}
with $a,\alpha\in\mathbb{R}$, it is also possible to vary the required regularity of $f$ and $v\cdot\nabla_x f$ in the preceding results. Indeed, it is readily seen from the proof of Theorem \ref{result:2}, using that the Bessel potentials are self-adjoint and commute with $M(D_x,D_x)$, that $\tilde f$ belongs to $\dot H^\frac 12_x(\mathbb{R}^n_x)$ as soon as
\begin{equation}\label{duality:1}
	\begin{aligned}
		(1-\Delta_x)^\frac a2 (1-\Delta_v)^\frac \alpha 2 f & \in L^p(\mathbb{R}^n_x;L^q(\mathbb{R}^n_v))
		\quad\text{(resp.\ $L^p(\mathbb{R}^n_v;L^q(\mathbb{R}^n_x))$),}
		\\
		(1-\Delta_x)^{-\frac a2} (1-\Delta_v)^{-\frac \alpha 2}(v\cdot\nabla_x f) & \in L^{p'}(\mathbb{R}^n_x;L^{q'}(\mathbb{R}^n_v))
		\quad\text{(resp.\ $L^{p'}(\mathbb{R}^n_v;L^{q'}(\mathbb{R}^n_x))$).}
	\end{aligned}
\end{equation}
We will come back to regularity issues in the energy method in the next section. The variants on this duality principle for velocity averaging lemmas are now pretty much endless (e.g., introducing Besov spaces) and we will refrain from delving any further into more examples. At this stage, the method is arguably more important than the results.

\medskip

We conclude this section by observing that most previously existing velocity averaging results relied on the construction of a parametrix, i.e., an approximate inverse representation formula, for the kinetic transport equations \eqref{transport:2} or \eqref{transport:1}. These representation formulas have proven to be a powerful tool in kinetic theory and have already largely contributed to the discovery of an abundance of velocity averaging results (e.g., see \cite{am14}, \cite{am19}, \cite{b94}, \cite{dp01},\footnote{
Theorem 1.2 from \cite{dp01} is based on a subtle interpolation procedure and provides a sharp velocity averaging lemma in Besov spaces.
Although it is quite likely that a duality energy method can be used in that case, the detailed proofs remain to be written.
} \cite{dlm91}, \cite{gs02}, \cite{jv04}, \cite{w02}). However, the duality principle provided by the energy method seems to be missing from the insight brought by such parametrix representations (except in the case $n=1$ where the duality principle can be recovered by interpolation methods from the estimates of \cite[Section 4]{am19}). In fact, it remains unclear whether such formulas are even able to capture the duality principle at all, when $n\geq 2$.

We will nevertheless come back to the use of parametrix representation formulas for \eqref{transport:1} in conjunction with the energy method in Section \ref{section:dispersion} below, where we explore the effects of dispersion on the energy method.

\section{Singular sources and hypoellipticity}
\label{section:3}

It is not uncommon to encounter situations where the source term $g(x,v)$ in \eqref{transport:1} (or in \eqref{transport:2}) is singular. In the history of velocity averaging, such singular settings emerged naturally in the study of the Vlasov--Maxwell system \eqref{VM} by considering the Vlasov term $(E+v\times B)\cdot\nabla_vf$ as a source term in the kinetic transport equation. This led the authors of \cite{dl89:2} (see Section 3 therein) to consider the case where the source $g(x,v)$ in \eqref{transport:1} takes the form
\begin{equation*}
	g=(1-\Delta_v)^\frac\beta 2h
	\text{ with }
	h\in L^2(\mathbb{R}^n_x\times\mathbb{R}^n_v),
\end{equation*}
for some $\beta\geq 0$, which results in a decreased regularity on velocity averages.

Another fundamental example of a similar singular setting can be found in the Boltzmann equation \eqref{boltzmann} for long-range interactions (as well as the Landau equation). Indeed, in this case, the collision operator $Q(f,f)$ is known to act as a non-linear fractional diffusion in the velocity variable. For more details, we refer to \cite{av02}, where a notion of global renormalized solutions was constructed for the Boltzmann equation without cutoff (see also \cite{av04} concerning the Landau equation).

We also mention here that kinetic formulations of non-linear systems typically lead to kinetic transport equations with singular source terms. For example, the regularity results from \cite{jp02} are based on applications of velocity averaging lemmas with singular source terms to kinetic formulations of multidimensional scalar conservation laws, one-dimensional isentropic gas dynamics and Ginzburg--Landau models in micromagnetics.

Due to the significance of such singular settings, we are now going to momentarily stray away from the topic of maximal velocity averaging (in the sense that we will investigate cases where $\tilde f$ might not belong to $\dot H^{1/2}_x$) and explore the impact of singular source terms on the energy method. Inevitably, the important hypoelliptic phenomenon in kinetic equations established in \cite{b02} (see also \cite{as11}), which transfers regularity from the velocity variable $v$ to the spatial variable $x$, will also play a role in the results of this section.

Many versions of velocity averaging in the classical Hilbertian setting $L^2_{x,v}$ (including hypoellipticity and singular sources) can now be found in the literature. Nevertheless, for a convenient reference, we point to \cite{am14}, where perhaps the most comprehensive version of Hilbertian velocity averaging can be found in Theorem 4.5 therein. Thus, we will be comparing the regularity of velocity averages obtained in the results of this section with the regularity established in Theorem~4.5 from \cite{am14}. Quite satisfyingly, we will see that the respective gains of regularity match.

Our next result shows the versatility of the energy method by establishing the regularization of velocity averages in the presence of singular sources, taking into account the hypoelliptic transfer of regularity from $v$ to $x$.

\begin{thm}\label{result:3}
	Let the family $\{f_\lambda(x,v)\}_{\lambda\in\Lambda}\subset\mathcal{S}(\mathbb{R}^n\times\mathbb{R}^n)$ be uniformly compactly supported in $v$ and such that the following subsets are bounded
	\begin{equation*}
		\begin{aligned}
			\big\{(1-\Delta_x)^\frac a 2(1-\Delta_v)^\frac \alpha 2f_\lambda\big\}_{\lambda\in\Lambda} & \subset L^p(\mathbb{R}^n_x;L^q(\mathbb{R}^n_v)),
			\\
			\big\{(1-\Delta_x)^\frac b 2(1-\Delta_v)^\frac \beta 2v\cdot\nabla_x f_\lambda\big\}_{\lambda\in\Lambda} & \subset L^{p'}(\mathbb{R}^n_x;L^{q'}(\mathbb{R}^n_v)),
			\\
			\big\{(1-\Delta_x)^\frac c 2(1-\Delta_v)^\frac \gamma 2f_\lambda\big\}_{\lambda\in\Lambda} & \subset L^2(\mathbb{R}^n_x\times\mathbb{R}^n_v),
		\end{aligned}
	\end{equation*}
	for some given $1<p,q<\infty$ and any regularity parameters $a,b,c,\alpha,\beta,\gamma\in\mathbb{R}$ satisfying the constraints
	\begin{equation*}
		1+a+b-2c\geq 0,
		\quad
		\alpha+\beta\leq 0,
		\quad
		\gamma>-\frac 12.
	\end{equation*}
	Then $\{\tilde f_\lambda\}_{\lambda\in\Lambda}$ is a bounded family in $H^\sigma(\mathbb{R}^n_x)$, where
	\begin{equation*}
		\sigma=
		\frac{1+a+b-2c}{1-\alpha-\beta+2\gamma}\Big(\frac 12+\gamma\Big)+c.
	\end{equation*}
	
	More precisely, for any compact set $K\subset\mathbb{R}^n_v$, there exists $C_{K}>0$ (which also depends on all integrability and regularity parameters) such that, for all $f(x,v)\in\mathcal{S}(\mathbb{R}^n\times\mathbb{R}^n)$ vanishing for $v$ outside $K$,
	\begin{equation*}
		\begin{aligned}
			\|\tilde f\|_{H^\sigma_x}^2
			&\leq
			C_{K}\|(1-\Delta_x)^\frac a 2(1-\Delta_v)^\frac \alpha 2f\|_{L^p_x L^q_v}
			\|(1-\Delta_x)^\frac b 2(1-\Delta_v)^\frac \beta 2v\cdot\nabla_x f\|_{L^{p'}_x L^{q'}_v}
			\\
			&\quad +C_K
			\|(1-\Delta_x)^\frac c 2(1-\Delta_v)^\frac \gamma 2f\|_{L^2_{x,v}}^2.
		\end{aligned}
	\end{equation*}
\end{thm}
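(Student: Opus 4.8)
The plan is to feed the energy identity \eqref{energy:1} a real multiplier that combines the Bessel weights with a velocity-frequency cutoff placed at a spatial-frequency-dependent scale, and then to optimize that scale. Since the family already lies in $\mathcal S$ and the constants produced will be independent of $f$, it suffices to prove the displayed quantitative inequality for a single $f\in\mathcal S(\R^n\times\R^n)$ vanishing for $v\notin K$. Following the reduction in the proof of Theorem \ref{result:1}, I would first isolate the one-dimensional mechanism and choose, for an exponent $r\ge0$ and a fixed even $\chi\in C_c^\infty(\R)$ with $\chi\equiv1$ near the origin,
\begin{equation*}
  m(\xi,\eta)=(1+|\xi|^2)^{\sigma-\frac12}\,\sign(\xi)\,\sign(\eta)\,\chi\!\left(\frac{\eta}{R(\xi)}\right),\qquad R(\xi)=(1+|\xi|^2)^{\frac r2}.
\end{equation*}
The factor $\sign(\eta)$ is what extracts the trace on $\{\eta=0\}$, since $\partial_\eta\sign(\eta)$ is twice the Dirac mass at the origin, exactly as in Theorem \ref{result:1}; the cutoff $\chi(\eta/R(\xi))$ truncates the velocity frequencies at the scale $R(\xi)$.

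Inserting $m$ into \eqref{energy:1}, the Dirac part of $\xi\,\partial_\eta m$ produces the trace term $\int(1+|\xi|^2)^{\sigma}|\widehat f(\xi,0)|^2\,d\xi\sim\|\tilde f\|_{H^\sigma}^2$, while the derivative of the cutoff contributes an error concentrated on $\{|\eta|\sim R(\xi)\}$; thus \eqref{energy:1} rearranges into
\begin{equation*}
  \|\tilde f\|_{H^\sigma_x}^2\lesssim \bigl|\langle v\cdot\nabla_x f,\,m(D_x,D_v)f\rangle_{L^2}\bigr|
  +\left|\int (1+|\xi|^2)^{\sigma}\,\frac1{R(\xi)}\,\chi'\!\left(\frac{\eta}{R(\xi)}\right)|\widehat f|^2\,d\xi\,d\eta\right|.
\end{equation*}
For the first term I write $\langle v\cdot\nabla_x f,mf\rangle_{L^2}=\langle Bg,\,T(Af)\rangle_{L^2}$, where $g=v\cdot\nabla_x f$, $A=(1-\Delta_x)^{\frac a2}(1-\Delta_v)^{\frac\alpha2}$, $B=(1-\Delta_x)^{\frac b2}(1-\Delta_v)^{\frac\beta2}$ and $T=B^{-1}m(D_x,D_v)A^{-1}$; the hypotheses then bound it by $\|T\|\,\|Af\|_{L^pL^q}\|Bg\|_{L^{p'}L^{q'}}$. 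On the support of the cutoff one has $|\eta|\lesssim R(\xi)$, where the symbol of $T$ is of size $(1+|\xi|^2)^{\sigma-\frac{1+a+b}2}(1+|\eta|^2)^{-\frac{\alpha+\beta}2}\lesssim (1+|\xi|^2)^{\sigma-\frac{1+a+b}2-\frac{r(\alpha+\beta)}2}$, using $\alpha+\beta\le0$; for $T$ to be a bounded mixed-Lebesgue multiplier this forces $\sigma\le\frac{1+a+b}2+\frac{r(\alpha+\beta)}2$. For the error term, on $\{|\eta|\sim R(\xi)\}$ one has $(1+|\eta|^2)^{\gamma}\approx R(\xi)^{2\gamma}$, so comparison with $\|Cf\|_{L^2}^2=\int(1+|\xi|^2)^c(1+|\eta|^2)^\gamma|\widehat f|^2$, where $C=(1-\Delta_x)^{\frac c2}(1-\Delta_v)^{\frac\gamma2}$, shows the error is $\lesssim\|Cf\|_{L^2}^2$ precisely when $(1+|\xi|^2)^{\sigma-c}\lesssim R(\xi)^{1+2\gamma}$, that is $\sigma\le c+\frac{r(1+2\gamma)}2$ (here $\gamma>-\frac12$ ensures $1+2\gamma>0$).

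It then remains to optimize over the scale exponent $r\ge0$. The first constraint is decreasing in $r$ (as $\alpha+\beta\le0$) and the second is increasing in $r$, so the largest admissible $\sigma$ is attained when they coincide; solving
\begin{equation*}
  \frac{1+a+b}2+\frac{r(\alpha+\beta)}2=c+\frac{r(1+2\gamma)}2
\end{equation*}
gives $r=\theta:=\frac{1+a+b-2c}{1-\alpha-\beta+2\gamma}$, which is nonnegative thanks to $1+a+b-2c\ge0$ and the positive denominator, and then $\sigma=c+\theta\bigl(\frac12+\gamma\bigr)$, the announced value. With this choice both terms are controlled and summing them yields the stated inequality, the low-frequency range $|\xi|\lesssim1$, where the trace term degenerates, being absorbed trivially into the $L^2$ term.

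The case $n\ge2$ should follow the coordinatewise device of Theorem \ref{result:1}: one runs the same construction in each pair $(\xi_j,\eta_j)$, extracts the partial averages $\tilde f_j$, and recovers $\|\tilde f\|_{H^\sigma}$ through the compact-support inequality \eqref{support:1}. The hard part will be twofold. First, one must verify that $T$ is genuinely bounded on the mixed Lebesgue spaces: because the cutoff scale $R(\xi)$ depends on $\xi$, the symbol of $T$ is a genuine two-variable symbol, and one needs a joint Marcinkiewicz/Mikhlin multiplier theorem in the UMD-valued setting (as already used for Theorem \ref{result:2}), checking that the mixed derivatives of $\chi(\eta/R(\xi))$ obey symbol bounds uniformly in the scale; the tensor structure of the $\sign$ factors is what keeps this within reach. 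Second, and more delicate, is the reconciliation in dimension $n\ge2$ of the velocity-frequency truncation---indispensable for the boundedness of $T$ when $\alpha+\beta<0$ and for the error control when $\gamma<0$---with the recovery of the \emph{full} velocity average, which a priori requires no truncation in the transverse velocity frequencies. Resolving this tension, by piecing together the energy estimate on moderate velocity frequencies with the a priori $L^2$ bound on the high ones, is where the main technical effort will lie.
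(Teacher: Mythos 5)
Your one-dimensional argument is, up to notation, exactly the paper's: the same multiplier $\langle\xi\rangle^{2\sigma-1}\,\sign\xi\,\sign\eta\,\chi(\eta/\langle\xi\rangle^{s})$, the same splitting of $\xi\partial_\eta m$ into a Dirac trace part and a cutoff error supported on $\{|\eta|\sim\langle\xi\rangle^{s}\}$, the same two constraints on $\sigma$, and the same optimization yielding $s=\frac{1+a+b-2c}{1-\alpha-\beta+2\gamma}$. The multiplier boundedness you flag as a hard point is settled in the paper by Lizorkin's Marcinkiewicz-type theorem for mixed Lebesgue spaces applied to $m_*(\xi,\eta)=\bigl(\langle\eta\rangle\langle\xi\rangle^{-s}\bigr)^{-(\alpha+\beta)}\chi(\eta/\langle\xi\rangle^{s})$; note that this symbol satisfies the Marcinkiewicz criterion \eqref{criterion:1} but not the H\"ormander--Mihlin criterion \eqref{criterion:2}, so the choice of multiplier theorem is not a detail.

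The genuine gap is the point you yourself single out at the end and leave unresolved: for $n\geq 2$ the Dirac part of $\xi\cdot\nabla_\eta m$ only produces the truncated trace $\chi\bigl(0,\eta'/\langle\xi\rangle^{s}\bigr)\widehat f(\xi,0,\eta')$ on $\{\eta_1=0\}$, and one must still pass from this to $\widehat f(\xi,0,0)$. Your proposed fix---recovering the transverse frequencies $|\eta'|\gtrsim\langle\xi\rangle^{s}$ from the a priori $L^2$ bound---does not close: that bound carries the weight $\langle\xi\rangle^{2c}\langle\eta\rangle^{2\gamma}$, which at the splitting scale $|\eta'|\sim\langle\xi\rangle^{s}$ is of size $\langle\xi\rangle^{2c+2s\gamma}$, short of the required $\langle\xi\rangle^{2\sigma}=\langle\xi\rangle^{2c+2s\gamma+s}$ by a full factor $\langle\xi\rangle^{s}$ (and the $L^2_{x,v}$ bound controls $\widehat f$ on all of frequency space, not its trace on $\{\eta_1=0\}$, so a further loss is incurred there). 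The paper resolves the tension by a different device: it takes $\chi\in\mathcal S(\mathbb{R}^n)$ with $\chi(0)=1$ and $\widehat\chi$ \emph{compactly supported}, so that $\chi\bigl(0,\eta'/\langle\xi\rangle^{s}\bigr)\widehat f(\xi,0,\eta')$ has, uniformly in $\xi$ (because $\langle\xi\rangle^{s}\geq1$), a compactly supported inverse Fourier transform in $v'$; the Cauchy--Schwarz/Plancherel step \eqref{support:2} then evaluates the product at $\eta'=0$ directly, with no frequency splitting at all. The price is that such a $\chi$ is analytic and cannot be $\equiv1$ near the origin, which is why the paper replaces your normalization by the condition $|r|^{-2\gamma}\nabla\chi(r)\in L^\infty$, needed so that the cutoff-derivative error is still dominated by $\langle\eta\rangle^{2\gamma}\langle\xi\rangle^{-2s\gamma}$. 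Without this device, or a genuine substitute for it, your proof does not go through in dimension $n\geq2$.
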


\begin{rema}
	Using Lemma \ref{approximation:6} from the appendix, it is possible to extend the validity of the above inequality to any compactly supported function $f(x,v)$ vanishing for $v$ outside $K$ and satisfying the bounds
	\begin{equation*}
		\begin{aligned}
			(1-\Delta_x)^\frac a 2(1-\Delta_v)^\frac \alpha 2f & \in L^p(\mathbb{R}^n_x;L^q(\mathbb{R}^n_v)),
			\\
			(1-\Delta_x)^\frac b 2(1-\Delta_v)^\frac \beta 2v\cdot\nabla_x f & \in L^{p'}(\mathbb{R}^n_x;L^{q'}(\mathbb{R}^n_v)),
			\\
			(1-\Delta_x)^\frac c 2(1-\Delta_v)^\frac \gamma 2f & \in L^2(\mathbb{R}^n_x\times\mathbb{R}^n_v),
			\\
			(1-\Delta_x)^\frac b 2(1-\Delta_v)^\frac \beta 2 f & \in L^{p'}(\mathbb{R}^n_x;L^{q'}(\mathbb{R}^n_v)),
		\end{aligned}
	\end{equation*}
	where the constant $C_K$ remains independent of the size of the support of $f(x,v)$ in $x$ and the norm of $(1-\Delta_x)^\frac b 2(1-\Delta_v)^\frac \beta 2 f$ in $L^{p'}(\mathbb{R}^n_x;L^{q'}(\mathbb{R}^n_v))$.
\end{rema}

\begin{rema}
	In a first reading of the statement of Theorem \ref{result:3}, it may not seem optimal to impose two distinct bounds
	\begin{equation*}
		\big\{(1-\Delta_x)^\frac a 2(1-\Delta_v)^\frac \alpha 2f_\lambda\big\}_{\lambda\in\Lambda} \subset L^p(\mathbb{R}^n_x;L^q(\mathbb{R}^n_v))
	\end{equation*}
	and 
	\begin{equation*}
		\big\{(1-\Delta_x)^\frac c 2(1-\Delta_v)^\frac \gamma 2f_\lambda\big\}_{\lambda\in\Lambda} \subset L^2(\mathbb{R}^n_x\times\mathbb{R}^n_v)
	\end{equation*}
	on the family of functions $\{f_\lambda\}_{\lambda\in\Lambda}$.
	Nevertheless, these bounds are a reflection of the fact that velocity averaging lemmas often display different patterns of regularization depending on whether the integrability parameters belong to the range $(1,2]$ or the range $[2,\infty)$.
	Thus, by imposing bounds in $L^p_xL^q_v$ as well as in $L^2_{x,v}$, we are able to formulate a unified statement of Theorem \ref{result:3} that applies across all parameters $1<p,q<\infty$.
	Later on, following the proof of Theorem \ref{result:3}, we will provide direct corollaries where the need for distinct bounds in $L^p_xL^q_v$ and $L^2_{x,v}$ is removed. These corollaries certainly seem more practical and they can be reconciled with the results from the previous section, but their applicability requires us to discriminate the relative size of the integrability parameters with respect to the value $2$.
\end{rema}

\begin{proof}
	We are going to utilize the rather common notation
	\begin{equation*}
		\langle r\rangle=(1+|r|^2)^\frac 12,
	\end{equation*}
	for any $r\in\mathbb{R}^n$, so that $(1-\Delta_x)^\frac 12=\langle D_x\rangle$ and $(1-\Delta_v)^\frac 12=\langle D_v\rangle$.
	Moreover, we will sometimes write $A\lesssim B$ to denote that $A\leq CB$, for some constant $C>0$ that only depends on fixed parameters.
	
	\emph{We consider the case $n=1$, first.} We start by introducing a cutoff function $\chi(r)\in C_c^\infty(\mathbb{R})$ with the property that
	\begin{equation}\label{cutoff:2}
		\mathds{1}_{\{|r|\leq\frac 12\}}\leq\chi(r)\leq\mathds{1}_{\{|r|\leq 1\}},
	\end{equation}
	for every $r\in\mathbb{R}$. We then employ the one-dimensional energy identity \eqref{energy:6} with the multiplier
	\begin{equation*}
		m_0(\xi,\eta)=\frac{\operatorname{sign}\xi\times \operatorname{sign}\eta}{\langle \xi\rangle^{1-2\sigma}}
		\chi\Big(\frac{\eta}{\langle \xi\rangle^s}\Big),
	\end{equation*}
	where the parameter $s\geq 0$ is to be determined later on.
	In particular, we compute that
	\begin{equation*}
		\xi \partial_\eta m_0(\xi,\eta)= \frac{2|\xi|}{\langle\xi\rangle^{1-2\sigma}}\delta_0(\eta)
		+\frac{|\xi|\operatorname{sign}\eta}{\langle\xi\rangle^{1+s-2\sigma}} \chi'\Big(\frac{\eta}{\langle\xi\rangle^s}\Big)
	\end{equation*}
	which, when incorporated into \eqref{energy:6}, leads to
	\begin{equation*}
		\begin{aligned}
			2\int_{\mathbb{R}}\frac{|\xi|}{\langle\xi\rangle}\langle\xi\rangle^{2\sigma}\big|\widehat f(\xi,0)\big|^2 d\xi
			& =8\pi^2\operatorname{Re} \langle v\partial_xf,m_0(D_x,D_v)f\rangle_{L^2_{x,v}}
			\\
			&\quad -\iint_{\mathbb{R}\times\mathbb{R}}\frac{|\xi|\operatorname{sign}\eta}{\langle\xi\rangle^{1+s-2\sigma}} \chi'\Big(\frac{\eta}{\langle\xi\rangle^s}\Big)\big|\widehat f(\xi,\eta)\big|^2 d\xi d\eta.
		\end{aligned}
	\end{equation*}
	The use of the truncation $\chi\big(\frac{\eta}{\langle\xi\rangle^s}\big)$ is typical when dealing with the hypoelliptic phenomenon and the transfer of regularity between $x$ and $v$ in kinetic equations. For example, the reader will find similar truncations being employed in \cite[Section 4.4]{as11}.
	
	Next, observe that $\chi'\big(\frac{\eta}{\langle\xi\rangle^s}\big)$ is supported on
	\begin{equation*}
		\Big\{\frac 12 \langle\xi\rangle^s\leq |\eta|\leq \langle\xi\rangle^s\Big\}
		\subset
		\Big\{\frac 12 \langle\xi\rangle^s\leq \langle\eta\rangle\leq \sqrt 2 \langle\xi\rangle^s\Big\}
		\subset
		\Big\{1\leq 4^{|\gamma|}\frac{\langle\eta\rangle^{2\gamma}}{\langle\xi\rangle^{2\gamma s}}\Big\}
	\end{equation*}
	so that we may deduce
	\begin{equation*}
		\begin{aligned}
			\int_{\mathbb{R}}\frac{|\xi|}{\langle\xi\rangle}\langle\xi\rangle^{2\sigma}
			\big|\hspace{3pt}\widehat{\hspace{-3pt}\tilde f}(\xi)\big|^2 d\xi
			& \leq 4\pi^2\operatorname{Re}
			\Big\langle
				\langle D_x\rangle^{b}\langle D_v\rangle^{\beta}
				v\partial_xf,\frac{m_0(D_x,D_v)}{\langle D_x\rangle^{a+b}\langle D_v\rangle^{\alpha+\beta}}
				\langle D_x\rangle^{a}\langle D_v\rangle^{\alpha} f
			\Big\rangle_{L^2_{x,v}}
			\\
			&\quad +\frac {4^{|\gamma|}\|\chi'\|_{L^\infty}}2
			\iint_{\mathbb{R}\times\mathbb{R}}
				\frac{|\xi|\langle \xi\rangle^{2c}\langle \eta\rangle^{2\gamma}}
				{\langle \xi\rangle^{1+s(1+2\gamma)+2c-2\sigma}}
				\big|\widehat f(\xi,\eta)\big|^2
			d\xi d\eta.
		\end{aligned}
	\end{equation*}
	The denominator in the last integrand above dictates the appropriate choice for the parameter $s\geq 0$. More precisely, we set
	\begin{equation}\label{parameter:1}
		s=\frac{2(\sigma -c)}{1+2\gamma}=\frac{1+a+b-2c}{1-\alpha-\beta+2\gamma},
	\end{equation}
	whence
	\begin{equation}\label{estimate:1}
		\begin{aligned}
			\int_{\mathbb{R}}\frac{|\xi|}{\langle\xi\rangle}\langle\xi\rangle^{2\sigma}
			\big|\hspace{3pt}\widehat{\hspace{-3pt}\tilde f}(\xi)\big|^2 d\xi
			& \leq 4\pi^2
			\left\|
				\langle D_x\rangle^{b}\langle D_v\rangle^{\beta}v\partial_xf
			\right\|_{L^{p'}_x L^{q'}_v}
			\left\|
				\frac{m_0(D_x,D_v)}{\langle D_x\rangle^{a+b}\langle D_v\rangle^{\alpha+\beta}}
				\langle D_x\rangle^{a}\langle D_v\rangle^{\alpha} f
			\right\|_{L^p_x L^q_v}
			\\
			&\quad +2\pi^2 4^{|\gamma|}\|\chi'\|_{L^\infty}
			\left\|
				\langle D_x\rangle^{c}\langle D_v\rangle^{\gamma} f
			\right\|_{L^2_{x,v}}^2.
		\end{aligned}
	\end{equation}
	We address now the somewhat subtle step of establishing the boundedness of the multiplier
	\begin{equation*}
		\frac{m_0(\xi,\eta)}{\langle \xi\rangle^{a+b}\langle \eta\rangle^{\alpha+\beta}}
		=\frac{\operatorname{sign}\xi \times \operatorname{sign}\eta}{\langle \xi\rangle^{1+a+b-2\sigma}\langle \eta\rangle^{\alpha+\beta}}
		\chi\Big(\frac{\eta}{\langle \xi\rangle^s}\Big)
		=(\operatorname{sign}\xi \times \operatorname{sign}\eta)
		m_*(\xi,\eta),
	\end{equation*}
	where
	\begin{equation*}
		m_*(\xi,\eta)=
		\left(\frac{\langle \eta\rangle}{\langle \xi\rangle^s}\right)^{-(\alpha+\beta)}
		\chi\Big(\frac{\eta}{\langle \xi\rangle^s}\Big),
	\end{equation*}
	over the mixed Lebesgue space $L^p(\mathbb{R}_x;L^q(\mathbb{R}_v))$.
	
	We have already explained, in the proofs of Theorems \ref{result:1} and \ref{result:2}, how the Fourier multiplier $\operatorname{sign}\xi \times \operatorname{sign}\eta$ yields an operator that is bounded over any reflexive mixed Lebesgue space. This boundedness was justified by separating the action of the operator on each variable $x$ and $v$, and then interpreting mixed Lebesgue spaces as vector-valued Lebesgue spaces in order to apply boundedness theorems for vector-valued singular integral operators.
	
	Unfortunately, this natural strategy fails to establish the boundedness of $m_*(D_x,D_v)$, because the multiplier $m_*(\xi,\eta)$ mixes the variables $\xi$ and $\eta$. It seems that one may be able to overcome this hurdle by interpreting $m_*(D_x,D_v)$ as an operator-valued Fourier multiplier and then applying corresponding boundedness results (such as \cite[Theorem 5.5.10]{hvvl16} or \cite[Theorem 8.3.19]{hvvl17}). But this advanced technology requires some deep knowledge of the functional analysis of Banach spaces (such as the notion of $R$-boundedness of families of operators) and its use would be overkill in our context.
	
	Instead, here, we prefer to resort to \emph{ad hoc} theorems on the boundedness of Fourier multipliers. Specifically, we are now going to apply Corollary 1 from \cite{l70}, which generalizes the classical Marcinkiewicz multiplier theorem (see \cite[Corollary 6.2.5]{g14} for a recent presentation of this classical result) to mixed Lebesgue spaces. To this end, recall first that, for any given uniformly bounded function
	\begin{equation*}
		\psi(k):\mathbb{R}^N\to\mathbb{R} \quad \text{with }N\geq 1,
	\end{equation*}
	the Marcinkiewicz multiplier theorem states that the bound
	\begin{equation}\label{criterion:1}
		\sum_{\lambda\in\{0,1\}^N}\sup_{k\in\mathbb{R}^N}
		|k^\lambda\partial_k^\lambda \psi(k)|=
		\sum_{\lambda\in\{0,1\}^N}\sup_{k\in\mathbb{R}^N}
		|k_1^{\lambda_1}k_2^{\lambda_2}\cdots k_N^{\lambda_N} \partial_{k_1}^{\lambda_1}\partial_{k_2}^{\lambda_2}\cdots \partial_{k_N}^{\lambda_N}\psi(k)|<\infty
	\end{equation}
	is sufficient to guarantee the boundedness of the operator $\psi(D)$ over the Lebesgue spaces $L^p(\mathbb{R}^N)$, for any $1<p<\infty$. Corollary 1 in \cite{l70} generalizes this classical result and establishes that \eqref{criterion:1} also implies the boundedness of $\psi(D)$ over any mixed Lebesgue space $L^{p_1}_{k_1}L^{p_2}_{k_2}\cdots L^{p_N}_{k_N}$ on $\mathbb{R}^N$, with $1<p_1,p_2,\ldots,p_N<\infty$ (i.e., any reflexive mixed Lebesgue space).
	
	We should note that the classical H\"ormander--Mihlin criterion (see \cite[Theorem 5.2.7]{g14})
	\begin{equation}\label{criterion:2}
		\sum_{\substack{\lambda\in\mathbb{N}^N\\|\lambda|\leq [\frac N2]+1}}\sup_{k\in\mathbb{R}^N}
		|k|^{|\lambda|}|\partial_k^\lambda \psi(k)|<\infty,
	\end{equation}
	which also provides a sufficient condition for the boundedness of $\psi(D)$ over reflexive Lebesgue spaces, has been revisited more recently in \cite{ai16} where it is shown to entail the boundedness of $\psi(D)$ over reflexive mixed Lebesgue spaces, as well.
	
	Now, it is tedious but straighforward to verify that $m_*(\xi,\eta)$ satisfies \eqref{criterion:1} with $N=2$. This step requires that $\alpha+\beta\leq 0$. Perhaps the simplest strategy for this computation consists in noticing that one can write $m_*(\xi, \eta)=\psi(\xi,\langle\xi\rangle^{-s}\eta)$, where
	\begin{equation*}
		\psi(\xi,\eta)=\big(\langle\xi\rangle^{-2s}+\eta^2\big)^{-\frac{\alpha+\beta}2}\chi(\eta)
	\end{equation*}
	satisfies \eqref{criterion:1}, too. Observe, however, that $m_*(\xi,\eta)$ does not satisfy \eqref{criterion:2}. It therefore follows from a crucial application of Corollary 1 from \cite{l70} that $m_*(D_x,D_v)$ and $m_0(D_x,D_v)$ are bounded over $L^p(\mathbb{R}_x;L^q(\mathbb{R}_v))$.
	
	We may now deduce from \eqref{estimate:1} that
	\begin{equation}\label{estimate:2}
		\begin{aligned}
			\int_{\mathbb{R}}\frac{|\xi|}{\langle\xi\rangle}\langle\xi\rangle^{2\sigma}
			\big|\hspace{3pt}\widehat{\hspace{-3pt}\tilde f}(\xi)\big|^2 d\xi
			& \lesssim
			\left\|
				\langle D_x\rangle^{b}\langle D_v\rangle^{\beta}v\partial_xf
			\right\|_{L^{p'}_x L^{q'}_v}
			\left\|
				\langle D_x\rangle^{a}\langle D_v\rangle^{\alpha} f
			\right\|_{L^p_x L^q_v}
			\\
			&\quad +
			\left\|
				\langle D_x\rangle^{c}\langle D_v\rangle^{\gamma} f
			\right\|_{L^2_{x,v}}^2.
		\end{aligned}
	\end{equation}
	Finally, further observe that, for any $\varphi\in C^\infty_c(\mathbb{R})$ such that $\varphi(v)f(x,v)=f(x,v)$, one has
	\begin{equation*}
		|\hspace{3pt}\widehat{\hspace{-3pt}\tilde f}(\xi)|
		=\left|\int_{\mathbb{R}}\mathcal{F}_xf(\xi,v)\varphi(v) dv\right|
		\leq \big\|\langle D_v\rangle^{\gamma}\mathcal{F}_xf(\xi,v)\big\|_{L^2_v}
		\big\|\langle D_v\rangle^{-\gamma}\varphi\big\|_{L^2_v},
	\end{equation*}
	where $\mathcal{F}_x$ denotes the Fourier transform with respect to $x$ only, which leads to
	\begin{equation}\label{estimate:3}
		\int_{\mathbb{R}}\mathds{1}_{\{|\xi|\leq 1\}}
		\big|\hspace{3pt}\widehat{\hspace{-3pt}\tilde f}(\xi)\big|^2 d\xi
		\lesssim
		\int_{\mathbb{R}}\langle \xi\rangle^{2c}
		\big\|\langle D_v\rangle^{\gamma}\mathcal{F}_xf(\xi,v)\big\|_{L^2_v}^2 d\xi.
	\end{equation}
	All in all, combining \eqref{estimate:2} and \eqref{estimate:3}, we conclude that
	\begin{equation*}
		\begin{aligned}
			\int_{\mathbb{R}}\langle\xi\rangle^{2\sigma}
			\big|\hspace{3pt}\widehat{\hspace{-3pt}\tilde f}(\xi)\big|^2 d\xi
			& \lesssim
			\left\|
				\langle D_x\rangle^{b}\langle D_v\rangle^{\beta}v\partial_xf
			\right\|_{L^{p'}_x L^{q'}_v}
			\left\|
				\langle D_x\rangle^{a}\langle D_v\rangle^{\alpha} f
			\right\|_{L^p_x L^q_v}
			\\
			&\quad +
			\left\|
				\langle D_x\rangle^{c}\langle D_v\rangle^{\gamma} f
			\right\|_{L^2_{x,v}}^2,
		\end{aligned}
	\end{equation*}
	which completes the proof of the one-dimensional case.
	
	\emph{We focus now on higher dimensions $n\geq 2$.}
	To this end, we introduce a cutoff function with slightly different properties. Here, we are going to use some $\chi(r)\in\mathcal{S}(\mathbb{R}^n)$, with $\chi(0)=1$, such that its Fourier transform $\widehat \chi$ is compactly supported and 
	\begin{equation}\label{cutoff:1}
		|r|^{-2\gamma}\nabla\chi(r)\in L^\infty(\mathbb{R}^n).
	\end{equation}
	The latter property replaced the requirement that $\chi$ be constant near the origin in \eqref{cutoff:2}, which is now precluded by the compactness of the support of $\widehat \chi$ (because it implies the analyticity of $\chi$). Observe that such a cutoff function can be easily constructed by starting with some $\chi_0\in\mathcal{S}(\mathbb{R}^n)$ satisfying that $\chi_0(0)=1$ and $\widehat\chi_0$ is compactly supported, and then defining $\chi(r)=\chi_0(r)p(r)$ where $p(r)$ is a suitable polynomial with coefficients chosen so that $p(0)=1$ and $\partial^\lambda\chi(0)=0$, for all $\lambda\in\mathbb{N}^n$ with $1\leq |\lambda|\leq N$ and $N$ large enough.
	
	Thus, we generalize the one-dimensional strategy by considering now the multiplier
	\begin{equation*}
		m_1(\xi,\eta)=\frac{\operatorname{sign}\xi_1\times \operatorname{sign}\eta_1}{\langle \xi\rangle^{1-2\sigma}}
		\chi\Big(\frac{\eta}{\langle \xi\rangle^s}\Big)^2,
	\end{equation*}
	where $s\geq 0$ is given by \eqref{parameter:1}. In particular, we find that
	\begin{equation*}
		\xi\cdot\nabla_\eta m_1(\xi,\eta)=
		\frac{2|\xi_1|}{\langle\xi\rangle^{1-2\sigma}}\delta_0(\eta_1)\chi\Big(0,\frac{\eta'}{\langle \xi\rangle^s}\Big)^2
		+2\frac{\operatorname{sign}\xi_1\times \operatorname{sign}\eta_1}{\langle\xi\rangle^{1+s-2\sigma}}
		\xi\cdot\nabla\chi\Big(\frac{\eta}{\langle\xi\rangle^s}\Big)\chi\Big(\frac{\eta}{\langle \xi\rangle^s}\Big),
	\end{equation*}
	where $\eta'=(\eta_2,\ldots,\eta_n)\in\mathbb{R}^{n-1}$. Therefore, injecting $m_1(\eta,\xi)$ into the energy identity \eqref{energy:1} yields that
	\begin{equation*}
		\begin{aligned}
			\iint_{\mathbb{R}^n\times\mathbb{R}^{n-1}}\frac{|\xi_1|}{\langle\xi\rangle^{1-2\sigma}}
			\big|\chi\Big(0,\frac{\eta'}{\langle \xi\rangle^s}\Big)\widehat f(\xi,0,\eta')\big|^2 d\xi d\eta'
			\hspace{-40mm}
			\\
			& =(2\pi)^{2n}\operatorname{Re} \langle v\cdot\nabla_xf,m_1(D_x,D_v)f\rangle_{L^2_{x,v}}
			\\
			&\quad
			-\iint_{\mathbb{R}^n\times\mathbb{R}^n}
			\frac{\operatorname{sign}\xi_1\times \operatorname{sign}\eta_1}{\langle\xi\rangle^{1+s-2\sigma}}
			\xi\cdot\nabla\chi\Big(\frac{\eta}{\langle\xi\rangle^s}\Big)
			\chi\Big(\frac{\eta}{\langle\xi\rangle^s}\Big)
			\big|\widehat f(\xi,\eta)\big|^2 d\xi d\eta,
		\end{aligned}
	\end{equation*}
	whereby, further using \eqref{cutoff:1}, we deduce that
	\begin{equation}\label{estimate:4}
		\begin{aligned}
			\iint_{\mathbb{R}^n\times\mathbb{R}^{n-1}}\frac{|\xi_1|}{\langle\xi\rangle^{1-2\sigma}}
			\big|\chi\Big(0,\frac{\eta'}{\langle \xi\rangle^s}\Big)\widehat f(\xi,0,\eta')\big|^2 d\xi d\eta'
			\hspace{-40mm}
			\\
			&\lesssim
			\left\|
				\langle D_x\rangle^{b}\langle D_v\rangle^{\beta}v\cdot \nabla_xf
			\right\|_{L^{p'}_x L^{q'}_v}
			\left\|
				\frac{m_1(D_x,D_v)}{\langle D_x\rangle^{a+b}\langle D_v\rangle^{\alpha+\beta}}
				\langle D_x\rangle^{a}\langle D_v\rangle^{\alpha} f
			\right\|_{L^p_x L^q_v}
			\\
			&\quad
			+\iint_{\mathbb{R}^n\times\mathbb{R}^n}
			\frac{|\xi|\langle\xi\rangle^{2c}\langle\eta\rangle^{2\gamma}}{\langle\xi\rangle^{1+s(1+2\gamma)+2c-2\sigma}}
			\big|\widehat f(\xi,\eta)\big|^2 d\xi d\eta.
		\end{aligned}
	\end{equation}
	Note that $s(1+2\gamma)+2c-2\sigma=0$ in the last integrand above.
	
	Next, as in the one-dimensional case, we observe that the boundedness of the multiplier
	\begin{equation*}
		\begin{aligned}
			\frac{m_1(\xi,\eta)}{\langle \xi\rangle^{a+b}\langle \eta\rangle^{\alpha+\beta}}
			&=\frac{\operatorname{sign}\xi_1 \times \operatorname{sign}\eta_1}{\langle \xi\rangle^{1+a+b-2\sigma}\langle \eta\rangle^{\alpha+\beta}}
			\chi\Big(\frac{\eta}{\langle \xi\rangle^s}\Big)^2
			\\
			&=(\operatorname{sign}\xi_1 \times \operatorname{sign}\eta_1)
			\left(\frac{\langle \eta\rangle}{\langle \xi\rangle^s}\right)^{-(\alpha+\beta)}
			\chi\Big(\frac{\eta}{\langle \xi\rangle^s}\Big)^2,
		\end{aligned}
	\end{equation*}
	over the mixed Lebsegue space $L^p(\mathbb{R}_x;L^q(\mathbb{R}_v))$, follows from a direct application of Corollary~1 from \cite{l70}. Indeed, it is readily seen through a straightforward calculation that the above multiplier satisfies Marcinkiewicz's criterion \eqref{criterion:1}.
	
	Thus, in view of the boundedness of
	\begin{equation*}
		\frac{m_1(D_x,D_v)}{\langle D_x\rangle^{a+b}\langle D_v\rangle^{\alpha+\beta}},
	\end{equation*}
	we deduce from \eqref{estimate:4} that
	\begin{equation}\label{estimate:5}
		\begin{aligned}
			\int_{\mathbb{R}^n}\frac{|\xi_1|}{\langle\xi\rangle}\langle\xi\rangle^{2\sigma}
			\int_{\mathbb{R}^{n-1}}
			\big|\chi\Big(0,\frac{\eta'}{\langle \xi\rangle^s}\Big)\widehat f(\xi,0,\eta')\big|^2 d\eta' d\xi
			\hspace{-50mm}
			\\
			&\lesssim
			\left\|
				\langle D_x\rangle^{b}\langle D_v\rangle^{\beta}v\cdot \nabla_xf
			\right\|_{L^{p'}_x L^{q'}_v}
			\left\|
				\langle D_x\rangle^{a}\langle D_v\rangle^{\alpha} f
			\right\|_{L^p_x L^q_v}
			+\left\|
				\langle D_x\rangle^{c}\langle D_v\rangle^{\gamma} f
			\right\|_{L^2_{x,v}}^2.
		\end{aligned}
	\end{equation}
	There only remains to show now that the left-hand side above controls $\tilde f(x)$ in $H_x^\sigma(\mathbb{R}^n)$. To this end, observe that the Fourier transforms in $\eta'$ of both $\chi\big(0,{\eta'}/{\langle \xi\rangle^s}\big)$ and $\widehat f(\xi,0,\eta')$ are compactly supported. In particular, note that the support of the Fourier transform in $\eta'$ of $\chi\big(0,{\eta'}/{\langle \xi\rangle^s}\big)$ is uniformly bounded because $\langle \xi\rangle^s\geq 1$. Therefore, we conclude that the product $\chi\big(0,{\eta'}/{\langle \xi\rangle^s}\big)\widehat f(\xi,0,\eta')$ has a Fourier transform in $\eta'$ that is uniformly compactly supported, as well. Consequently, replicating estimate \eqref{support:1} based on Plancherel's theorem, we find that
	\begin{equation}\label{support:2}
		\begin{aligned}
			\big|\hspace{3pt}\widehat{\hspace{-3pt}\tilde f}(\xi)\big|^2
			&=
			\big|\chi\Big(0,\frac{0'}{\langle \xi\rangle^s}\Big)\widehat f(\xi,0,0')\big|^2
			=\frac 1{(2\pi)^{n-1}}\left|\int_{\mathbb{R}^{n-1}}
			\mathcal{F}_{\eta'}\Big[\chi\Big(0,\frac{\eta'}{\langle \xi\rangle^s}\Big)\widehat f(\xi,0,\eta')\Big] dv'\right|^2
			\\
			&\lesssim \frac 1{(2\pi)^{n-1}}
			\int_{\mathbb{R}^{n-1}}
			\left|\mathcal{F}_{\eta'}\Big[\chi\Big(0,\frac{\eta'}{\langle \xi\rangle^s}\Big)\widehat f(\xi,0,\eta')\Big]\right|^2 dv'
			=\int_{\mathbb{R}^{n-1}}
			\big|\chi\Big(0,\frac{\eta'}{\langle \xi\rangle^s}\Big)\widehat f(\xi,0,\eta')\big|^2 d\eta'.
		\end{aligned}
	\end{equation}
	All in all, combining \eqref{estimate:5} and \eqref{support:2}, and using that the kinetic transport equation is symmetric with respect to each coordinate of $\mathbb{R}^n$, we arrive at the estimate
	\begin{equation}\label{estimate:6}
		\begin{aligned}
			\int_{\mathbb{R}^n}\frac{|\xi|}{\langle\xi\rangle}\langle\xi\rangle^{2\sigma}
			\big|\hspace{3pt}\widehat{\hspace{-3pt}\tilde f}(\xi)\big|^2
			d\xi
			&\leq
			\sum_{j=1}^n
			\int_{\mathbb{R}^n}\frac{|\xi_j|}{\langle\xi\rangle}\langle\xi\rangle^{2\sigma}
			\big|\hspace{3pt}\widehat{\hspace{-3pt}\tilde f}(\xi)\big|^2
			d\xi
			\\
			&\lesssim
			\left\|
				\langle D_x\rangle^{b}\langle D_v\rangle^{\beta}v\cdot \nabla_xf
			\right\|_{L^{p'}_x L^{q'}_v}
			\left\|
				\langle D_x\rangle^{a}\langle D_v\rangle^{\alpha} f
			\right\|_{L^p_x L^q_v}
			\\
			&\quad +\left\|
				\langle D_x\rangle^{c}\langle D_v\rangle^{\gamma} f
			\right\|_{L^2_{x,v}}^2.
		\end{aligned}
	\end{equation}
	
	Finally, it is readily seen that estimate \eqref{estimate:3} can be reproduced in higher dimensions, thereby showing that
	\begin{equation}\label{estimate:7}
		\int_{\mathbb{R}^n}\mathds{1}_{\{|\xi|\leq 1\}}
		\big|\hspace{3pt}\widehat{\hspace{-3pt}\tilde f}(\xi)\big|^2 d\xi
		\lesssim
		\left\|
			\langle D_x\rangle^{c}\langle D_v\rangle^{\gamma} f
		\right\|_{L^2_{x,v}}^2.
	\end{equation}
	The proof is then easily concluded by combining \eqref{estimate:6} and \eqref{estimate:7}.
\end{proof}

\begin{rema}
	Theorem \ref{result:3} and its ensuing corollaries below somewhat differ from the contributions from \cite{jlt20}. More precisely, while Theorem 2 in \cite{jlt20} and Theorem \ref{result:3} yield a similar gain of regularity on the velocity averages in the range of parameters where they intersect, they also apply to distinct settings which are not simultaneously covered by both results. Indeed, in Theorem \ref{result:3}, we consider a broader range of possible derivatives in location $x$ and velocity $v$, both in the solution and the source term, whereas Theorem 2 in \cite{jlt20} remains more focused on reaching a wider range of integrability conditions which do not necessarily rely on duality between the left and right-hand sides of the kinetic transport equation.
\end{rema}

\begin{rema}
	Theorem \ref{result:3} only considers the case of mixed Lebesgue spaces $L^p(\mathbb{R}^n_x;L^q(\mathbb{R}^n_v))$. However, observe that the above proof can be adapted to spaces of the type $L^p(\mathbb{R}^n_v;L^q(\mathbb{R}^n_x))$ and follows through \emph{mutatis mutandis.}
	
	The preceding proof can also be easily adapted to handle homogeneous Sobolev spaces by employing Riesz potentials $|D_x|$ and $|D_v|$ instead of Bessel potentials $\langle D_x\rangle$ and $\langle D_v\rangle$, with appropriate restrictions on the range of regularity parameters.
\end{rema}

Observe that Theorem \ref{result:3} recovers the exact same gain of regularity as the Hilbertian setting provided we take $c=a$ and $\gamma=\alpha$ therein. A precise statement is given in the next corollary. Again, this new version of velocity averaging generalizes classical results with a duality principle enabled by the energy method. More precisely, it establishes that the gain of regularity on velocity averages is preserved when one loses some local integrability on the source term $v\cdot\nabla_x f$, provided a corresponding integrability is gained on the density $f$. However, it should be noted that it fails to recover the full range of regularity parameters made available in \cite[Theorem 4.5]{am14}. Thus, it remains unclear whether the same duality principle remains valid if $-\alpha<\beta<\frac 12$.

\begin{cor}\label{result:5}
	Let the family $\{f_\lambda(x,v)\}_{\lambda\in\Lambda}\subset\mathcal{S}(\mathbb{R}^n\times\mathbb{R}^n)$ be uniformly compactly supported in $v$ and such that the following subsets are bounded
	\begin{equation*}
		\begin{aligned}
			\big\{(1-\Delta_x)^\frac a 2(1-\Delta_v)^\frac \alpha 2f_\lambda\big\}_{\lambda\in\Lambda} & \subset L^p(\mathbb{R}^n_x;L^q(\mathbb{R}^n_v))\cap L^2(\mathbb{R}^n_x\times\mathbb{R}^n_v),
			\\
			\big\{(1-\Delta_x)^\frac b 2(1-\Delta_v)^\frac \beta 2v\cdot\nabla_x f_\lambda\big\}_{\lambda\in\Lambda} & \subset L^{p'}(\mathbb{R}^n_x;L^{q'}(\mathbb{R}^n_v)),
		\end{aligned}
	\end{equation*}
	for some given $1<p,q<\infty$ and any regularity parameters $a,b,\alpha,\beta\in\mathbb{R}$ satisfying the constraints
	\begin{equation*}
		1+b-a\geq 0,
		\quad
		\alpha+\beta\leq 0,
		\quad
		\alpha>-\frac 12.
	\end{equation*}
	Then $\{\tilde f_\lambda\}_{\lambda\in\Lambda}$ is a bounded family in $H^\sigma(\mathbb{R}^n_x)$, where
	\begin{equation*}
		\sigma=
		\frac{1+b-a}{1+\alpha-\beta}\Big(\frac 12+\alpha\Big)+a.
	\end{equation*}
	
	In particular, if $2\leq p,q<\infty$ and $\{f_\lambda(x,v)\}_{\lambda\in\Lambda}$ is uniformly compactly supported in all variables $(x,v)$, then the uniform bounds
	\begin{equation*}
		\begin{aligned}
			\big\{(1-\Delta_x)^\frac a 2(1-\Delta_v)^\frac \alpha 2f_\lambda\big\}_{\lambda\in\Lambda} & \subset L^p(\mathbb{R}^n_x;L^q(\mathbb{R}^n_v)),
			\\
			\big\{(1-\Delta_x)^\frac b 2(1-\Delta_v)^\frac \beta 2v\cdot\nabla_x f_\lambda\big\}_{\lambda\in\Lambda} & \subset L^{p'}(\mathbb{R}^n_x;L^{q'}(\mathbb{R}^n_v)),
		\end{aligned}
	\end{equation*}
	are sufficient to imply that $\{\tilde f_\lambda\}_{\lambda\in\Lambda}$ is a bounded family in $H^\sigma(\mathbb{R}^n_x)$.
\end{cor}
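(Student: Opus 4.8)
The plan is to obtain both assertions directly from Theorem~\ref{result:3} by specializing its parameters, exactly along the lines indicated in the remark preceding the statement. For the first assertion, I would apply Theorem~\ref{result:3} with the choices $c=a$ and $\gamma=\alpha$. With these substitutions the three hypotheses of Theorem~\ref{result:3} collapse onto the two displayed bounds of the corollary: the first bound $(1-\Delta_x)^{a/2}(1-\Delta_v)^{\alpha/2}f_\lambda\in L^p_xL^q_v\cap L^2_{x,v}$ supplies simultaneously the $L^p_xL^q_v$ hypothesis and the $L^2_{x,v}$ hypothesis (the latter because $c=a$, $\gamma=\alpha$ make it the same quantity), while the second bound is unchanged. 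The three constraints $1+a+b-2c\ge 0$, $\alpha+\beta\le 0$, $\gamma>-\tfrac12$ become precisely $1+b-a\ge 0$, $\alpha+\beta\le 0$, $\alpha>-\tfrac12$, and inserting $c=a$, $\gamma=\alpha$ into the regularity index of Theorem~\ref{result:3} yields exactly $\sigma=\frac{1+b-a}{1+\alpha-\beta}\bigl(\frac12+\alpha\bigr)+a$. Thus this part is a verbatim specialization and requires no new work.

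For the \emph{in particular} assertion, the entire content is to show that, when $2\le p,q<\infty$ and the $f_\lambda$ are uniformly compactly supported in all variables, the $L^p_xL^q_v$ bound on $g_\lambda:=(1-\Delta_x)^{a/2}(1-\Delta_v)^{\alpha/2}f_\lambda$ already forces a \emph{uniform} $L^2_{x,v}$ bound on the same $g_\lambda$; granting this, the first assertion applies and concludes the proof. This is the step I expect to be the main obstacle, and the naive route fails here: on a fixed compact set $L^p_xL^q_v\hookrightarrow L^2_{x,v}$ by H\"older when $p,q\ge 2$, but this cannot be applied to $g_\lambda$ directly, since the Bessel potentials $\langle D_x\rangle^a\langle D_v\rangle^\alpha$ are nonlocal and destroy the compact support of $f_\lambda$, so $g_\lambda$ is generally not compactly supported.

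To circumvent this, I would split $g_\lambda=\Psi g_\lambda+(1-\Psi)g_\lambda$ with $\Psi\in C_c^\infty$ equal to $1$ on a neighborhood of the common support. The local piece $\Psi g_\lambda$ is compactly supported, so H\"older (using $p,q\ge 2$) gives $\|\Psi g_\lambda\|_{L^2_{x,v}}\lesssim \|\Psi g_\lambda\|_{L^p_xL^q_v}\le \|g_\lambda\|_{L^p_xL^q_v}$ uniformly. For the far piece, choose $\Phi\in C_c^\infty$ with $\Phi=1$ on $\operatorname{supp}f_\lambda$ and $\Psi=1$ on $\operatorname{supp}\Phi$, and set $A=\langle D_x\rangle^a\langle D_v\rangle^\alpha$. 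Using $f_\lambda=\Phi f_\lambda$, the identity $A\Phi=\Phi A+[A,\Phi]$, and $(1-\Psi)\Phi=0$, one computes $(1-\Psi)g_\lambda=(1-\Psi)A\Phi A^{-1}g_\lambda=(1-\Psi)[A,\Phi]A^{-1}g_\lambda$. The kernel of $(1-\Psi)[A,\Phi]$ equals $(1-\Psi)(z)k_A(z,z')\Phi(z')$, which is nonzero only for $z$ and $z'$ in the disjoint, separated sets $\operatorname{supp}(1-\Psi)$ and $\operatorname{supp}\Phi$; by pseudolocality of the pseudodifferential operator $A$, this kernel is smooth and rapidly decaying there. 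Composing with the smooth Fourier multiplier $A^{-1}$ (which maps Schwartz to Schwartz) preserves the smoothing character, so $(1-\Psi)[A,\Phi]A^{-1}$ has a Schwartz kernel and hence maps $L^p_xL^q_v$ boundedly into $L^2_{x,v}$ by a Schur-type estimate. This gives $\|(1-\Psi)g_\lambda\|_{L^2_{x,v}}\lesssim \|g_\lambda\|_{L^p_xL^q_v}$, which combined with the local piece yields the desired uniform $L^2_{x,v}$ bound. (When $a,\alpha\ge 0$ the operator $A^{-1}$ is already bounded on $L^p_xL^q_v$ and one can argue more crudely, but the commutator–pseudolocality route handles all admissible signs of $a$ and $\alpha$ uniformly.)
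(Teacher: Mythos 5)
The reduction of the first assertion to Theorem~\ref{result:3} with $c=a$, $\gamma=\alpha$ is exactly what the paper does, and you have also correctly isolated the real content of the \emph{in particular} part: since $\langle D_x\rangle^a\langle D_v\rangle^\alpha$ is nonlocal, the compact support of $f_\lambda$ does not make $g_\lambda=\langle D_x\rangle^a\langle D_v\rangle^\alpha f_\lambda$ compactly supported, so H\"older cannot be applied directly. However, your treatment of the far piece has a genuine gap. The operator $A=\langle D_x\rangle^a\langle D_v\rangle^\alpha$ is a \emph{tensor product} of pseudodifferential operators in $x$ and in $v$ separately; it is not a pseudodifferential operator in the H\"ormander classes on $\mathbb{R}^{2n}$ (the paper itself remarks in a footnote that such tensor-product symbols only satisfy estimates of the form $(1+\val\xi)^{-1}+(1+\val\eta)^{-1}$ rather than $(1+\val\xi+\val\eta)^{-1}$). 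Concretely, its Schwartz kernel is $k_a(x-x')\otimes k_\alpha(v-v')$, whose singular support is the union $\{x=x'\}\cup\{v=v'\}$, not the diagonal of $\mathbb{R}^{2n}\times\mathbb{R}^{2n}$. Disjointness of $\operatorname{supp}(1-\Psi)$ and $\operatorname{supp}\Phi$ in $\mathbb{R}^{2n}$ only guarantees $|x-x'|+|v-v'|\geq\epsilon$; it does not keep $(z,z')$ away from $\{x=x'\}$ or $\{v=v'\}$, where the kernel is genuinely distributional whenever $a$ or $\alpha$ is not an even nonnegative integer. Hence the kernel of $(1-\Psi)[A,\Phi]$ is not ``smooth and rapidly decaying'' on its support, the composition with $A^{-1}$ does not produce a Schwartz kernel on $\mathbb{R}^{2n}\times\mathbb{R}^{2n}$ (already for $\alpha=0$ the kernel carries a factor $\delta(v-v')$), and the Schur-type estimate you invoke does not apply as stated.

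The repair is to localize one variable at a time, which is what the paper does. It proves the scalar (and vector-valued) inequality
\begin{equation*}
	\|\langle D\rangle^r h\|_{L^{p_0}(\mathbb{R}^n)}\leq C\|\langle D\rangle^r h\|_{L^{p_1}(\mathbb{R}^n)},
	\qquad h\in C_c^\infty,\ r\in\mathbb{R},\ 1\leq p_0\leq p_1\leq\infty,
\end{equation*}
with $C$ depending on the support of $h$ (estimate \eqref{localization:1}), by first reducing to $r<0$ via the factorization $\langle D\rangle^r=\langle D\rangle^{r-2N}(1-\Delta)^N$ with $(1-\Delta)^N$ local, and then, for $r=-s<0$, splitting the explicit Bessel kernel $G_s=G_s^0+G_s^1$ into a compactly supported integrable piece (for which $G_s^0*h$ is compactly supported and H\"older applies) and a Schwartz tail with $\|\langle D\rangle^sG_s^1\|_{L^1}\leq\delta$, which is absorbed for $\delta$ small. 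This one-variable lemma is then applied in $x$ (with values in $L^q_v$) and in $v$ (pointwise in $x$, using that $\langle D_x\rangle^a f_\lambda$ retains the compact $v$-support), with $p_0=2\leq p_1\in\{p,q\}$. Your kernel-splitting intuition is the right one, but it must be implemented variable by variable exactly because the joint operator $A$ is not pseudolocal on $\mathbb{R}^{2n}$; if you wish to keep the commutator formulation, carry it out separately for $\langle D_x\rangle^a$ with a cutoff in $x$ and for $\langle D_v\rangle^\alpha$ with a cutoff in $v$.
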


\begin{proof}
	The first part of the corollary is a mere restatement of Theorem \ref{result:3} where we set $c=a$ and $\gamma=\alpha$. Therefore, we only need to focus on the localization principle provided by the second part of the corollary, which follows directly from the bound
	\begin{equation}\label{localization:1}
		\|\langle D\rangle^r h\|_{L^{p_0}(\mathbb{R}^n)}\leq C\|\langle D\rangle^r h\|_{L^{p_1}(\mathbb{R}^n)},
	\end{equation}
	for any $h\in C_c^\infty(\mathbb{R}^n)$, $r\in\mathbb{R}$ and $1\leq p_0 \leq p_1\leq\infty$,
	where the constant $C>0$ depends on the size of the support of $h$ and other fixed parameters. Note that this bound holds for scalar-valued and vector-valued Lebesgue spaces (the proof is the same).
	
	For the sake of completeness, we give now a short justification of \eqref{localization:1}. To this end, notice first that \eqref{localization:1} is obvious when $r$ is a positive even integer, for $\langle D\rangle^r=(1-\Delta)^\frac r2$ is a local differential operator in this case. For general values $r\in\mathbb{R}$, one can always decompose
	\begin{equation*}
		\langle D\rangle^r h=\langle D\rangle^{r-2N} (1-\Delta)^N h,
	\end{equation*}
	for some large positive integer $N$. Therefore, we only need to establish \eqref{localization:1} for negative values $r<0$.
	
	There are several ways to proceed. Perhaps the simplest option consists in considering the convolution form of Bessel potentials $\langle D\rangle^{-s}$, with $s>0$. Indeed, it is relatively easy to establish that
	\begin{equation}\label{kernel:1}
		\langle D\rangle^{-s}h(x)=\int_{\mathbb{R}^n} G_s(x-y)h(y)dy,
	\end{equation}
	where
	\begin{equation*}
		G_s(x)=\frac{1}{(4\pi)^\frac n2\Gamma(\frac s2)}\int_0^\infty e^{-t}e^{-\frac{|x|^2}{4t}}t^\frac{s-n}2 \frac {dt}t
	\end{equation*}
	and $\Gamma(\frac s2)=\int_0^\infty e^{-t}t^\frac s2\frac{dt}t$ denotes the usual Gamma function.
	In particular, observe that $G_s(x)$ is integrable and smooth away from the origin.
	Moreover, one can show, for some constant $C_{s,n}>0$, that
	\begin{equation*}
		G_s(x)\leq C_{s,n}e^{-\frac{|x|}2},
	\end{equation*}
	whenever $|x|\geq 2$. We refer to \cite[Section 6.1.2]{g14:2} for more details on Bessel potentials.
	
	Thus, we may now decompose the kernel into $G_s=G_s^0+G_s^1$, where $G_s^0\in L^1(\mathbb{R}^n)$ is compactly supported and $G_s^1\in\mathcal{S}(\mathbb{R}^n)$ is arbitrarily small in the sense that
	\begin{equation*}
		\|\langle D\rangle^{s}G_s^1\|_{L^1(\mathbb{R}^n)}\leq \delta,
	\end{equation*}
	for some $\delta>0$ to be determined later on. Then, since $G_s^0* h$ is also compactly supported, we deduce that
	\begin{equation*}
		\begin{aligned}
			\|\langle D\rangle^{-s} h\|_{L^{p_0}(\mathbb{R}^n)} & \leq \|G_s^0* h\|_{L^{p_0}(\mathbb{R}^n)}+\|G_s^1* h\|_{L^{p_0}(\mathbb{R}^n)}
			\\
			& \lesssim \|G_s^0* h\|_{L^{p_1}(\mathbb{R}^n)}+\|G_s^1* h\|_{L^{p_0}(\mathbb{R}^n)}
			\\
			& \leq \|\langle D\rangle^{-s} h\|_{L^{p_1}(\mathbb{R}^n)}+\|G_s^1* h\|_{L^{p_1}(\mathbb{R}^n)}+\|G_s^1* h\|_{L^{p_0}(\mathbb{R}^n)}
			\\
			& = \|\langle D\rangle^{-s} h\|_{L^{p_1}(\mathbb{R}^n)}+\|\langle D\rangle^{s}G_s^1* \langle D\rangle^{-s}h\|_{L^{p_1}(\mathbb{R}^n)}+\|\langle D\rangle^{s}G_s^1* \langle D\rangle^{-s}h\|_{L^{p_0}(\mathbb{R}^n)}
			\\
			& \leq \|\langle D\rangle^{-s} h\|_{L^{p_1}(\mathbb{R}^n)}
			+\delta
			\left(
			\|\langle D\rangle^{-s}h\|_{L^{p_1}(\mathbb{R}^n)}
			+\|\langle D\rangle^{-s}h\|_{L^{p_0}(\mathbb{R}^n)}
			\right).
		\end{aligned}
	\end{equation*}
	Finally, choosing $\delta>0$ sufficently small, we deduce that the local embedding \eqref{localization:1} holds for negative regularity indices, and therefore for all $r\in\mathbb{R}$. This concludes the proof of the corollary.
\end{proof}

\begin{rema}
	Observe that Corollary \ref{result:5} holds in the range $1< p,q \leq 2$, as well. In this case, further assuming that $f_\lambda(x,v)$ is compactly supported in all variables $(x,v)$, one can deduce that the uniform bounds
	\begin{equation*}
		\begin{aligned}
			(1-\Delta_x)^\frac a 2(1-\Delta_v)^\frac \alpha 2f_\lambda & \in L^2(\mathbb{R}^n_x\times\mathbb{R}^n_v),
			\\
			(1-\Delta_x)^\frac b 2(1-\Delta_v)^\frac \beta 2v\cdot\nabla_x f_\lambda & \in L^{p'}(\mathbb{R}^n_x;L^{q'}(\mathbb{R}^n_v)),
		\end{aligned}
	\end{equation*}
	are enough to imply that $\tilde f_\lambda$ belongs to $H^\sigma(\mathbb{R}^n_x)$ uniformly. But $L^{p'}_xL^{q'}_v\subset L^2_{x,v}$ over compact domains and, therefore, this local result is strictly weaker than the corresponding classical Hilbertian case $p=q=2$. We conclude that, in the range $1<p,q < 2$, Corollary \ref{result:3} is a global result and only provides new information on $\tilde f_\lambda$ if its support is unbounded.
\end{rema}

The following result is another direct consequence of Theorem \ref{result:3}. Specifically, we use now Sobolev embedding theorems to compare and relate the bounds
\begin{equation*}
	(1-\Delta_x)^\frac a 2(1-\Delta_v)^\frac \alpha 2f \in L^p(\mathbb{R}^n_x;L^q(\mathbb{R}^n_v))
\end{equation*}
and
\begin{equation*}
	(1-\Delta_x)^\frac c 2(1-\Delta_v)^\frac \gamma 2f \in L^2(\mathbb{R}^n_x\times\mathbb{R}^n_v),
\end{equation*}
thereby simplifying the statement of Theorem \ref{result:3} and establish more practical versions of velocity averaging results. In this case, a new duality principle shows how to trade integrability and regularity in a way that preserves the regularity of velocity averages obtained from the Hilbertian setting $p=q=2$.

\begin{cor}\label{result:4}
	Let the family $\{f_\lambda(x,v)\}_{\lambda\in\Lambda}\subset\mathcal{S}(\mathbb{R}^n\times\mathbb{R}^n)$ be uniformly compactly supported in $v$ and such that the following subsets are bounded
	\begin{equation*}
		\begin{aligned}
			\big\{(1-\Delta_x)^{\frac 1 2\left(a+n\left(\frac 1p-\frac 12\right)\right)}(1-\Delta_v)^{\frac 1 2\left(\alpha+n\left(\frac 1q-\frac 12\right)\right)}f_\lambda\big\}_{\lambda\in\Lambda} & \subset L^p(\mathbb{R}^n_x;L^q(\mathbb{R}^n_v)),
			\\
			\big\{(1-\Delta_x)^{\frac 1 2\left(b-n\left(\frac 1p-\frac 12\right)\right)}(1-\Delta_v)^{\frac 1 2\left(\beta-n\left(\frac 1q-\frac 12\right)\right)}v\cdot\nabla_x f_\lambda\big\}_{\lambda\in\Lambda} & \subset L^{p'}(\mathbb{R}^n_x;L^{q'}(\mathbb{R}^n_v)),
		\end{aligned}
	\end{equation*}
	for some given $1<p,q\leq 2$ and any regularity parameters $a,b,\alpha,\beta\in\mathbb{R}$ satisfying the constraints
	\begin{equation*}
		1+b-a\geq 0,
		\quad
		\alpha+\beta\leq 0,
		\quad
		\alpha>-\frac 12.
	\end{equation*}
	Then $\{\tilde f_\lambda\}_{\lambda\in\Lambda}$ is a bounded family in $H^\sigma(\mathbb{R}^n_x)$, where
	\begin{equation*}
		\sigma=
		\frac{1+b-a}{1+\alpha-\beta}\Big(\frac 12+\alpha\Big)+a.
	\end{equation*}
\end{cor}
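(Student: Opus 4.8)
The plan is to deduce this corollary directly from Theorem \ref{result:3}, by supplying the missing $L^2_{x,v}$ hypothesis of that theorem for free via Sobolev embedding. Concretely, I would apply Theorem \ref{result:3} with the regularity indices that actually appear in the present statement, namely
\[
\tilde a=a+n\Big(\tfrac1p-\tfrac12\Big),\quad \tilde b=b-n\Big(\tfrac1p-\tfrac12\Big),\quad \tilde\alpha=\alpha+n\Big(\tfrac1q-\tfrac12\Big),\quad \tilde\beta=\beta-n\Big(\tfrac1q-\tfrac12\Big),
\]
together with the choice $c=a$ and $\gamma=\alpha$ for the $L^2$ bound. The key observation is that $\tilde a+\tilde b=a+b$ and $\tilde\alpha+\tilde\beta=\alpha+\beta$, so that the three constraints $1+\tilde a+\tilde b-2c\ge0$, $\tilde\alpha+\tilde\beta\le0$ and $\gamma>-\tfrac12$ required by Theorem \ref{result:3} collapse to exactly $1+b-a\ge0$, $\alpha+\beta\le0$ and $\alpha>-\tfrac12$, while the output exponent $\frac{1+\tilde a+\tilde b-2c}{1-\tilde\alpha-\tilde\beta+2\gamma}\big(\tfrac12+\gamma\big)+c$ reduces precisely to the claimed value of $\sigma$.

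The only thing left to verify is therefore that the third hypothesis of Theorem \ref{result:3} with $c=a$, $\gamma=\alpha$---namely the uniform boundedness of $\{(1-\Delta_x)^{a/2}(1-\Delta_v)^{\alpha/2}f_\lambda\}_{\lambda\in\Lambda}$ in $L^2(\mathbb{R}^n_x\times\mathbb{R}^n_v)$---follows from the uniform $L^p_xL^q_v$ bound assumed in the corollary. Writing $W_\lambda=(1-\Delta_x)^{a/2}(1-\Delta_v)^{\alpha/2}f_\lambda$ and setting $s_1=n(\tfrac1p-\tfrac12)\ge0$, $s_2=n(\tfrac1q-\tfrac12)\ge0$, this amounts to the mixed-norm Sobolev inequality
\[
\|W_\lambda\|_{L^2_xL^2_v}\lesssim \big\|(1-\Delta_x)^{s_1/2}(1-\Delta_v)^{s_2/2}W_\lambda\big\|_{L^p_xL^q_v},
\]
whose right-hand side is exactly the quantity controlled by the first hypothesis. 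I would establish this in two successive steps: first, the scalar Sobolev embedding $W^{s_2,q}(\mathbb{R}^n_v)\hookrightarrow L^2(\mathbb{R}^n_v)$ applied for each fixed $x$ upgrades the integrability in $v$ from $q$ to $2$; then, taking the $L^2_x$ norm and applying the vector-valued Sobolev embedding $W^{s_1,p}(\mathbb{R}^n_x;L^q_v)\hookrightarrow L^2(\mathbb{R}^n_x;L^q_v)$ upgrades the integrability in $x$ from $p$ to $2$.

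The main technical obstacle is the justification of the vector-valued step, but it is mild. Since the Bessel potential $(1-\Delta_x)^{-s_1/2}$ is convolution against the positive Bessel kernel $G_{s_1}$ (as recalled around \eqref{kernel:1}), Minkowski's integral inequality yields, for each $x$, the pointwise-in-$x$ domination $\big\|[(1-\Delta_x)^{-s_1/2}h](x,\cdot)\big\|_{L^q_v}\le \big(G_{s_1}*\|h(\cdot,\cdot)\|_{L^q_v}\big)(x)$, so the vector-valued embedding reduces to the scalar Hardy--Littlewood--Sobolev inequality applied to the function $x\mapsto\|h(x,\cdot)\|_{L^q_v}$. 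The exponents are admissible throughout the range $1<p,q\le2$: one has $0\le s_i<n/\min(p,q)$, the critical relations $\tfrac1p-\tfrac{s_1}n=\tfrac12$ and $\tfrac1q-\tfrac{s_2}n=\tfrac12$ hold, and the endpoints $p=2$ or $q=2$ are trivial since then $s_1=0$ or $s_2=0$. The resulting constant depends only on $p,q,n$ and the fixed parameters, hence is uniform in $\lambda$, and the compact support of $f_\lambda$ in $v$ is a direct hypothesis left untouched. With this uniform $L^2_{x,v}$ bound in hand, the conclusion follows from a direct application of Theorem \ref{result:3}.
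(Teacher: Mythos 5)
Your proposal is correct and follows essentially the same route as the paper: Corollary \ref{result:4} is obtained by feeding the shifted regularity indices into Theorem \ref{result:3} with $c=a$, $\gamma=\alpha$, and supplying the $L^2_{x,v}$ hypothesis through the mixed-norm Sobolev embedding $W^{a+n(\frac1p-\frac12),p}_x(W^{\alpha+n(\frac1q-\frac12),q}_v)\subset H^a_x(H^\alpha_v)$, whose vector-valued half the paper also justifies via the positivity of the Bessel kernel $G_s$ (your Minkowski-inequality domination is just a cleaner phrasing of the same pointwise comparison).
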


\begin{proof}
	This result follows from a direct combination of Theorem \ref{result:3} with the Sobolev embedding
	\begin{equation*}
		W^{a+n\left(\frac 1p-\frac 12\right),p}(\mathbb{R}^n_x;W^{\alpha+n\left(\frac 1q-\frac 12\right),q}(\mathbb{R}^n_v))
		\subset
		H^{a}(\mathbb{R}^n_x;H^{\alpha}(\mathbb{R}^n_v)).
	\end{equation*}
	A few words concerning the validity of the above continuous embedding are in order. Even though the injection
	\begin{equation*}
		W^{\alpha+n\left(\frac 1q-\frac 12\right),q}(\mathbb{R}^n)
		\subset
		H^{\alpha}(\mathbb{R}^n),
		\quad\text{with }1<q<2\text{ and }\alpha\in\mathbb{R},
	\end{equation*}
	is absolutely classical and does not require any further explanation, we feel that, for the sake of completeness, a short justification of the vector-valued Sobolev embedding
	\begin{equation}\label{embedding:1}
		W^{a+n\left(\frac 1p-\frac 12\right),p}(\mathbb{R}^n;X)
		\subset
		H^{a}(\mathbb{R}^n;X),
		\quad\text{with }1<p<2\text{ and }a\in\mathbb{R},
	\end{equation}
	for any given Banach space, is required here.
	
	We believe that the simplest way to extend standard scalar-valued Sobolev embeddings to their vector-valued analogue consists in considering the convolution form \eqref{kernel:1} of Bessel potentials $\langle D\rangle^{-s}$, with $s>0$, which remains valid for any $h\in\mathcal{S}(\mathbb{R}^n;X)$.
	
	Indeed, notice first that scalar-valued Sobolev embeddings are obtained as a direct consequence of the Hardy--Littlewood--Sobolev inequality upon noticing that $G_s(x)$ is smooth away from the origin, decays exponentially and, further assuming $0<s<n$, that
	\begin{equation*}
		\lim_{x\to 0}|x|^{n-s}G_s(x)=
		\frac{1}{(4\pi)^\frac n2\Gamma(\frac s2)}\int_0^\infty e^{-\frac{1}{4t}}t^\frac{s-n}2 \frac {dt}t.
	\end{equation*}
	In the vector-valued setting, one may also deduce, from the non-negativeness of $G_s$, for all $s>0$, that
	\begin{equation*}
		\begin{aligned}
			\langle D\rangle^{s}\|h\|_X
			=
			\langle D\rangle^{s}\|\langle D\rangle^{-s}\langle D\rangle^{s}h\|_X
			\leq
			\langle D\rangle^{s}\langle D\rangle^{-s}\|\langle D\rangle^{s}h\|_X
			=
			\|\langle D\rangle^{s}h\|_X,
		\end{aligned}
	\end{equation*}
	thereby showing that $\langle D\rangle^{s}\|h\|_X\in L^p(\mathbb{R}^n)$, whenever $\langle D\rangle^{s}h\in L^p(\mathbb{R}^n;X)$. Thus, we conclude that the vector-valued Sobolev spaces inherit the embeddings from the corresponding scalar-valued case, which implies \eqref{embedding:1}.
	The proof of the corollary is now complete.
\end{proof}

\begin{rema}
	We notice that, by taking $b=-a$ and $\beta=-\alpha$ in Corollaries \ref{result:5} and \ref{result:4}, one recovers a regularity index $\sigma=\frac 12$. This is on par with the result \eqref{duality:1} deduced at the end of Section \ref{section:2} from the duality principle in the energy method.
\end{rema}

\begin{rema}
	One may also be interested in the implication of Sobolev embeddings on Theorem \ref{result:3} when $2\leq p,q<\infty$. In this case, we deduce by combining Sobolev embeddings with Theorem~\ref{result:3} that, if $f_\lambda(x,v)$ is compactly supported in $v$ and satisfies the uniform bounds
	\begin{equation*}
		\begin{aligned}
			f_\lambda & \in
			H^{a}(\mathbb{R}^n_x;H^{\alpha}(\mathbb{R}^n_v)),
			\\
			v\cdot\nabla_x f_\lambda & \in
			W^{b+n(\frac 12-\frac 1p),p'}(\mathbb{R}^n_x;W^{\beta+n(\frac 12-\frac 1q),q'}(\mathbb{R}^n_v))
			\subset
			H^{b}(\mathbb{R}^n_x;H^{\beta}(\mathbb{R}^n_v)),
		\end{aligned}
	\end{equation*}
	where
	\begin{equation*}
		1+b-a\geq 0,
		\quad
		\alpha+\beta\leq 0,
		\quad
		\alpha>-\frac 12,
	\end{equation*}
	then $\tilde f_\lambda$ belongs uniformly to $H^\sigma(\mathbb{R}^n_x)$, with
	\begin{equation*}
		\sigma=
		\frac{1+b-a}{1+\alpha-\beta}\Big(\frac 12+\alpha\Big)+a.
	\end{equation*}
	However, this result is not new. Indeed, it is obviously the strongest when $p=q=2$ and, therefore, it is already contained in the classical Hilbertian case $p=q=2$.
\end{rema}

For the sake of completeness, we record in the following two corollaries what happens to Theorem \ref{result:3} in the cases $p\leq 2\leq q$ and $q\leq 2\leq p$, respectively, by combining the methods of Corollaries \ref{result:5} and \ref{result:4}.

\begin{cor}\label{result:6}
	Let the family $\{f_\lambda(x,v)\}_{\lambda\in\Lambda}\subset\mathcal{S}(\mathbb{R}^n\times\mathbb{R}^n)$ be uniformly compactly supported in $v$ and such that the following subsets are bounded
	\begin{equation*}
		\begin{aligned}
			\big\{(1-\Delta_x)^{\frac 1 2\left(a+n\left(\frac 1p-\frac 12\right)\right)}(1-\Delta_v)^{\frac \alpha 2}f_\lambda\big\}_{\lambda\in\Lambda} & \subset L^p(\mathbb{R}^n_x;L^q(\mathbb{R}^n_v)),
			\\
			\big\{(1-\Delta_x)^{\frac 1 2\left(b-n\left(\frac 1p-\frac 12\right)\right)}(1-\Delta_v)^{\frac \beta 2}v\cdot\nabla_x f\big\}_{\lambda\in\Lambda} & \subset L^{p'}(\mathbb{R}^n_x;L^{q'}(\mathbb{R}^n_v)),
		\end{aligned}
	\end{equation*}
	for some given $1<p\leq 2\leq q<\infty$ and any regularity parameters $a,b,\alpha,\beta\in\mathbb{R}$ satisfying the constraints
	\begin{equation*}
		1+b-a\geq 0,
		\quad
		\alpha+\beta\leq 0,
		\quad
		\alpha>-\frac 12.
	\end{equation*}
	Then $\{\tilde f_\lambda\}_{\lambda\in\Lambda}$ is a bounded family in $H^\sigma(\mathbb{R}^n_x)$, where
	\begin{equation*}
		\sigma=
		\frac{1+b-a}{1+\alpha-\beta}\Big(\frac 12+\alpha\Big)+a.
	\end{equation*}
\end{cor}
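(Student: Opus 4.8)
The plan is to reduce the statement to a single application of Theorem \ref{result:3}, fed with the given integrability exponents $1<p\le 2\le q<\infty$ together with the parameters
\begin{equation*}
	a_*=a+n\Big(\tfrac 1p-\tfrac 12\Big),\quad
	b_*=b-n\Big(\tfrac 1p-\tfrac 12\Big),\quad
	\alpha_*=\alpha,\quad
	\beta_*=\beta,\quad
	c=a,\quad
	\gamma=\alpha.
\end{equation*}
With these choices the first two hypotheses of Theorem \ref{result:3} are \emph{exactly} the two bounds assumed in the corollary, while its three structural constraints become $1+a_*+b_*-2c=1+b-a\ge 0$, $\alpha_*+\beta_*=\alpha+\beta\le 0$ and $\gamma=\alpha>-\tfrac 12$, all of which hold. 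Substituting into the regularity formula of Theorem \ref{result:3} gives
\begin{equation*}
	\frac{1+a_*+b_*-2c}{1-\alpha_*-\beta_*+2\gamma}\Big(\tfrac 12+\gamma\Big)+c
	=\frac{1+b-a}{1+\alpha-\beta}\Big(\tfrac 12+\alpha\Big)+a=\sigma,
\end{equation*}
which is precisely the claimed index. Thus everything reduces to producing the remaining $L^2_{x,v}$ bound required by Theorem \ref{result:3}, namely that $F_\lambda:=(1-\Delta_x)^{\frac a2}(1-\Delta_v)^{\frac\alpha 2}f_\lambda$ is uniformly bounded in $L^2(\mathbb{R}^n_x\times\mathbb{R}^n_v)$, starting solely from the assumed bound on $(1-\Delta_x)^{\frac{a_*}2}(1-\Delta_v)^{\frac\alpha 2}f_\lambda$ in $L^p_xL^q_v$.

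To obtain this $L^2$ bound I would combine the two mechanisms already used in Corollaries \ref{result:4} and \ref{result:5}, applying each in the variable where it is appropriate. In the $x$ variable the functions are not compactly supported and $p\le 2$, so I would use the vector-valued Sobolev embedding \eqref{embedding:1} with target space $X=L^q(\mathbb{R}^n_v)$; since $a_*-a=n(\tfrac 1p-\tfrac 12)$, this converts the hypothesis into
\begin{equation*}
	\big\|F_\lambda\big\|_{L^2_xL^q_v}
	\lesssim
	\big\|(1-\Delta_x)^{\frac{a_*}2}(1-\Delta_v)^{\frac\alpha 2}f_\lambda\big\|_{L^p_xL^q_v}.
\end{equation*}
In the $v$ variable the functions are compactly supported and $q\ge 2$, so no regularity shift is needed; instead I would invoke the local embedding \eqref{localization:1}. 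Since the fractional operator $(1-\Delta_v)^{\frac\alpha 2}$ does not preserve compact support in $v$, the point is to apply \eqref{localization:1} to $h=(1-\Delta_x)^{\frac a2}f_\lambda$, which retains the $v$-support of $f_\lambda$, with $r=\alpha$ and Banach target $X=L^2(\mathbb{R}^n_x)$, so that $\langle D_v\rangle^{\alpha}h=F_\lambda$. Combined with Minkowski's inequality (valid because $q\ge 2$) to move the $v$-integration to the outside, this yields
\begin{equation*}
	\big\|F_\lambda\big\|_{L^2_{x,v}}
	=\big\|F_\lambda\big\|_{L^2_vL^2_x}
	\lesssim
	\big\|F_\lambda\big\|_{L^q_vL^2_x}
	\le
	\big\|F_\lambda\big\|_{L^2_xL^q_v}.
\end{equation*}
Chaining the two displays produces the desired uniform $L^2_{x,v}$ bound, and the conclusion follows from Theorem \ref{result:3}.

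The step I expect to require the most care is precisely this hybrid construction of the $L^2$ bound, because the corollary mixes a non-compactly-supported variable ($x$, with $p\le 2$) with a compactly-supported one ($v$, with $q\ge 2$). In $x$ one is forced to pay for the loss of integrability with the Sobolev shift $n(\tfrac 1p-\tfrac 12)$, whereas in $v$ the passage $L^q_v\hookrightarrow L^2_v$ comes for free from compact support; this is exactly why the statement shifts the $x$-regularity but leaves the $v$-regularity untouched. The only genuine subtlety is that the Bessel potentials destroy compact support, so \eqref{localization:1} must be applied to the partially smoothed function $(1-\Delta_x)^{\frac a2}f_\lambda$ (still compactly supported in $v$) rather than to $F_\lambda$ itself, together with a Minkowski swap to place the $v$-integration on the outside. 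All remaining computations are the routine verifications already carried out in the proofs of Corollaries \ref{result:4} and \ref{result:5}.
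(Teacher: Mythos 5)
Your proof is correct and follows essentially the same route as the paper: both reduce the corollary to Theorem \ref{result:3} with $c=a$, $\gamma=\alpha$ and supply the missing $L^2_{x,v}$ bound by combining the vector-valued Sobolev embedding \eqref{embedding:1} in $x$ with the local embedding \eqref{localization:1} in the compactly supported variable $v$. The paper performs the two embeddings in the opposite order, via the chain $W^{a+n(\frac 1p-\frac 12),p}(\mathbb{R}^n_x;W^{\alpha,q}(K_v))\subset W^{a+n(\frac 1p-\frac 12),p}(\mathbb{R}^n_x;H^{\alpha}(K_v))\subset H^{a}(\mathbb{R}^n_x;H^{\alpha}(K_v))$, which dispenses with your Minkowski swap, but this is an inessential difference.
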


\begin{proof}
	This result follows from a direct combination of Theorem \ref{result:3} with the embeddings
	\begin{equation*}
		W^{a+n\left(\frac 1p-\frac 12\right),p}(\mathbb{R}^n_x;W^{\alpha,q}(K_v))
		\subset
		W^{a+n\left(\frac 1p-\frac 12\right),p}(\mathbb{R}^n_x;H^{\alpha}(K_v))
		\subset
		H^{a}(\mathbb{R}^n_x;H^{\alpha}(K_v)),
	\end{equation*}
	for any compact subset $K_v\subset\mathbb{R}^n_v$, which is a Sobolev embedding in $x$ coupled with the local embedding \eqref{localization:1} of Lebesgue spaces in $v$.
\end{proof}

\begin{cor}\label{result:7}
	Let the family $\{f_\lambda(x,v)\}_{\lambda\in\Lambda}\subset\mathcal{S}(\mathbb{R}^n\times\mathbb{R}^n)$ be uniformly compactly supported in all variables $(x,v)$ and such that the following subsets are bounded
	\begin{equation*}
		\begin{aligned}
			\big\{(1-\Delta_x)^{\frac a 2}(1-\Delta_v)^{\frac 1 2\left(\alpha+n\left(\frac 1q-\frac 12\right)\right)}f_\lambda\big\}_{\lambda\in\Lambda} & \subset L^p(\mathbb{R}^n_x;L^q(\mathbb{R}^n_v)),
			\\
			\big\{(1-\Delta_x)^{\frac b 2}(1-\Delta_v)^{\frac 1 2\left(\beta-n\left(\frac 1q-\frac 12\right)\right)}v\cdot\nabla_x f_\lambda\big\}_{\lambda\in\Lambda} & \subset L^{p'}(\mathbb{R}^n_x;L^{q'}(\mathbb{R}^n_v)),
		\end{aligned}
	\end{equation*}
	for some given $1<q\leq 2\leq p<\infty$ and any regularity parameters $a,b,\alpha,\beta\in\mathbb{R}$ satisfying the constraints
	\begin{equation*}
		1+b-a\geq 0,
		\quad
		\alpha+\beta\leq 0,
		\quad
		\alpha>-\frac 12.
	\end{equation*}
	Then $\{\tilde f_\lambda\}_{\lambda\in\Lambda}$ is a bounded family in $H^\sigma(\mathbb{R}^n_x)$, where
	\begin{equation*}
		\sigma=
		\frac{1+b-a}{1+\alpha-\beta}\Big(\frac 12+\alpha\Big)+a.
	\end{equation*}
\end{cor}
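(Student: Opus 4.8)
The plan is to deduce Corollary \ref{result:7} from Theorem \ref{result:3} exactly as Corollaries \ref{result:4} and \ref{result:6} were, this being the symmetric counterpart of Corollary \ref{result:6} with the roles of $x$ and $v$ interchanged. Since here the velocity integrability satisfies $1<q\leq 2$ while the spatial integrability satisfies $2\leq p<\infty$, I would recover the auxiliary $L^2_{x,v}$ bound of Theorem \ref{result:3} by using a \emph{Sobolev} embedding in the velocity variable $v$ (which holds globally but costs the regularity $n(\frac 1q-\frac 12)$) combined with the \emph{local} embedding \eqref{localization:1} in the spatial variable $x$ (which costs no regularity but requires compact support). This is precisely why the hypothesis of Corollary \ref{result:7} demands that $f_\lambda$ be compactly supported in all variables, rather than merely in $v$ as in Corollaries \ref{result:5} and \ref{result:6}.

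Concretely, I would apply Theorem \ref{result:3} with its parameters (in that theorem's notation) chosen to be $a$, $b$, $c=a$, $\alpha+n\!\left(\frac 1q-\frac 12\right)$, $\beta-n\!\left(\frac 1q-\frac 12\right)$ and $\gamma=\alpha$. With this choice the first two bounds of Theorem \ref{result:3} coincide verbatim with the two hypotheses of the corollary, so the source term is fed in directly with no further manipulation. The third, $L^2_{x,v}$, bound of Theorem \ref{result:3}, namely $(1-\Delta_x)^{\frac a2}(1-\Delta_v)^{\frac \alpha2}f_\lambda\in L^2(\mathbb{R}^n_x\times\mathbb{R}^n_v)$, is then extracted from the first hypothesis through the embedding chain
\[
	W^{a,p}\big(\mathbb{R}^n_x;W^{\alpha+n\left(\frac 1q-\frac 12\right),q}(\mathbb{R}^n_v)\big)
	\subset
	W^{a,p}\big(\mathbb{R}^n_x;H^{\alpha}(\mathbb{R}^n_v)\big)
	\subset
	H^{a}\big(\mathbb{R}^n_x;H^{\alpha}(\mathbb{R}^n_v)\big),
\]
where the first inclusion is the classical scalar-valued Sobolev embedding applied in $v$ for (almost) every fixed $x$ followed by taking the $L^p_x$ norm (valid since $1<q\leq 2$), and the second inclusion is the local embedding \eqref{localization:1} applied in $x$ with values in $X=H^{\alpha}(\mathbb{R}^n_v)$, which is legitimate because $2\leq p$ and $f_\lambda$ is compactly supported in $x$. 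Equivalently, at the level of operators, one writes $\langle D_x\rangle^{a}\langle D_v\rangle^{\alpha}f_\lambda=\langle D_v\rangle^{-n\left(\frac 1q-\frac 12\right)}\big[\langle D_x\rangle^{a}\langle D_v\rangle^{\alpha+n\left(\frac 1q-\frac 12\right)}f_\lambda\big]$ and uses that $\langle D_v\rangle^{-n\left(\frac 1q-\frac 12\right)}:L^q_v\to L^2_v$ is bounded, followed by \eqref{localization:1} in $x$.

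The remaining verification is pure bookkeeping, and I expect no genuine difficulty there. The key observation is that the auxiliary shift $n\!\left(\frac 1q-\frac 12\right)$ cancels in every quantity entering Theorem \ref{result:3}: one finds $1+a+b-2a=1+b-a\geq 0$, the shifted second parameters sum to $\alpha+\beta\leq 0$, the chosen $\gamma=\alpha>-\frac 12$, and $1-(\alpha+\beta)+2\alpha=1+\alpha-\beta$, so that Theorem \ref{result:3} returns precisely
\[
	\sigma=\frac{1+b-a}{1+\alpha-\beta}\Big(\frac 12+\alpha\Big)+a,
\]
as claimed. The only point requiring slight care is ensuring that \eqref{localization:1} is invoked in the vector-valued setting with target $H^{\alpha}(\mathbb{R}^n_v)$ (justified in the proof of Corollary \ref{result:5}, where it is noted that the argument is identical for vector-valued Lebesgue spaces), and recognizing that it is exactly the compact $x$-support hypothesis that renders this final step valid.
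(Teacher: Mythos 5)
Your proposal is correct and follows essentially the same route as the paper: Corollary \ref{result:7} is deduced from Theorem \ref{result:3} with $c=a$, $\gamma=\alpha$ and the shifted velocity regularity indices, the auxiliary $L^2_{x,v}$ bound being supplied by the Sobolev embedding in $v$ combined with the local embedding \eqref{localization:1} in $x$ (the latter requiring $p\geq 2$ and the compact $x$-support, exactly as you note). The only difference from the paper's chain of embeddings is the order in which the two inclusions are applied, which is immaterial.
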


\begin{proof}
	This result follows from a direct combination of Theorem \ref{result:3} with the embeddings
	\begin{equation*}
		W^{a,p}(K_x;W^{\alpha+n\left(\frac 1q-\frac 12\right),q}(\mathbb{R}^n_v))
		\subset
		H^{a}(K_x;W^{\alpha+n\left(\frac 1q-\frac 12\right),q}(\mathbb{R}^n_v))
		\subset
		H^{a}(K_x;H^{\alpha}(\mathbb{R}^n_v)),
	\end{equation*}
	for any compact subset $K_x\subset\mathbb{R}^n_x$, which is a Sobolev embedding in $v$ coupled with the local embedding \eqref{localization:1} of Lebesgue spaces in $x$.
\end{proof}

\section{Dispersion and the energy method}
\label{section:dispersion}

In this last section, in order to further expand the range of applicability of the energy method, we explore the effects of kinetic dispersion on velocity averaging lemmas.

The dispersive effects in kinetic transport equations provide a vast array of estimates on the transport flow in mixed Lebesgue spaces. The basic kinetic dispersive estimate established in \cite{cp96} states that
\begin{equation}\label{dispersion:1}
	\|f(x-tv,v)\|_{L^p(\mathbb{R}^n_x;L^r(\mathbb{R}^n_v))}
	\leq \frac 1{|t|^{n(\frac 1r-\frac 1p)}}
	\|f(x,v)\|_{L^r(\mathbb{R}^n_x;L^p(\mathbb{R}^n_v))},
\end{equation}
for all $0< r\leq p\leq \infty$ and $t\neq 0$. It shows that the kinetic transport flow can be controlled in some mixed Lebesgue spaces for positive times, even though the data might not belong to these spaces initially.

As was also established in \cite{cp96} and further extended in \cite{kt98}, combining the preceding dispersive estimate with a $TT^*$-argument leads to the Strichartz estimate for the transport flow
\begin{equation}\label{dispersion:2}
	\|f(x-tv,v)\|_{L^q(\mathbb{R}_t;L^p(\mathbb{R}^n_x;L^r(\mathbb{R}^n_v)))}
	\leq C
	\|f(x,v)\|_{L^a(\mathbb{R}^n_x\times \mathbb{R}^n_v)},
\end{equation}
where $C>0$ is a fixed constant and the parameters $0< a,p,q,r\leq\infty$ satisfy either $a=p=q=r=\infty$ or
\begin{equation*}
	\frac 2q=n\left(\frac1r-\frac 1p\right),\quad \frac 2a=\frac 1p+\frac 1r
	\quad\text{and}\quad a<q
	\text{ (that is $\textstyle p<\frac{n+1}{n-1}r$).}
\end{equation*}
Note that the endpoint case $a=q$ is false in every dimension $n\geq 1$, unless $a=q=\infty$, as shown in \cite{bbgl14}.

Naturally, by interpolating \eqref{dispersion:1} and \eqref{dispersion:2}, it is then possible to obtain a wider range of dispersive estimates. This was performed in \cite[Corollary 8.2]{o11} where a careful interpolation procedure shows that
\begin{equation}\label{dispersion:3}
	\|f(x-tv,v)\|_{L^{q,c}(\mathbb{R}_t;L^p(\mathbb{R}^n_x;L^r(\mathbb{R}^n_v)))}
	\leq C
	\|f(x,v)\|_{L^b(\mathbb{R}^n_x;L^c(\mathbb{R}^n_v))},
\end{equation}
where $L^{q,c}$ denotes the usual Lorentz (quasi-Banach) space, $C>0$ is a fixed constant and the parameters $0< b,c,p,q,r\leq\infty$ satisfy
\begin{equation*}
	\frac 1q=n\left(\frac1b-\frac 1p\right),\quad \frac 1p+\frac 1r=\frac1b+\frac 1c,
	\quad 0<r<b<c<p<\infty,
	\quad\text{and}\quad p<\frac n{n-1}c.
\end{equation*}
Recall that $L^{q,c}\subset L^q$ whenever $c\leq q$, which is equivalent to $c\leq\frac{n+1}nr$ in the present setting. We refer the reader to \cite[Section 1.4]{g14} for more details on Lorentz spaces.

\medskip
The aforementioned kinetic dispersive phenomena have important implications on the analysis of the evolution of kinetic equations on the whole Euclidean space. For instance, as shown in \cite{a11}, under suitable assumptions, it is possible to establish the existence of global solutions to the Boltzmann equation \eqref{boltzmann} by exploiting the dispersion of the kinetic flow with some fine regularizing properties of the Boltzmann collision operator.

Kinetic dispersion has also already been previously employed in the context of velocity averaging lemmas in \cite{am14} and \cite{am19}. The methods therein are based on the use of a suitable parametrix representation formula. We are now going to adopt a similar approach and, therefore, recall the basic principles leading to such parametrices and establish their dispersive properties.

To this end, let us consider a cutoff function $\chi_0(t)\in C^\infty_c(\mathbb{R})$ such that $\int_\mathbb{R}\chi_0(t)dt=1$ and define
\begin{equation*}
	\chi_1(t)=\mathds{1}_{\{t\geq 0\}}-\int_{-\infty}^t\chi_0(s)ds.
\end{equation*}
The cutoff $\chi_1(t)$ is obviously not smooth. However, note that it is bounded, compactly supported and $\int_{\mathbb{R}}\chi_1(t)dt=\int_{\mathbb{R}}t\chi_0(t)dt$. Now, for any $f(x,v)\in\mathcal{S}(\mathbb{R}^n_x\times\mathbb{R}^n_v)$, a straightfoward integration by parts (if $v\neq 0$; the identity is straightforward when $v=0$) yields that
\begin{equation*}
	\begin{aligned}
		\int_\mathbb{R}v\cdot\nabla_xf(x-tv,v)\chi_1(t)dt
		&=
		-\int_\mathbb{R}\frac d{dt}f(x-tv,v)\chi_1(t)dt
		\\
		&=f(x,v)-
		\int_\mathbb{R}f(x-tv,v)\chi_0(t)dt,
	\end{aligned}
\end{equation*}
whereby we deduce the parametrix representation formula
\begin{equation}\label{parametrix:1}
	f(x,v)=\int_\mathbb{R}f(x-tv,v)\chi_0(t)dt+\int_\mathbb{R}v\cdot\nabla_xf(x-tv,v)\chi_1(t)dt.
\end{equation}

The next two lemmas summarize the precise dispersive properties of \eqref{parametrix:1} that will be used in combination with the energy method later on.

\begin{lem}\label{dispersive:1}
	Let $1\leq r_0\leq p_0\leq\infty$ and $1\leq r_1\leq p_1\leq\infty$ be such that
	\begin{equation*}
		\frac1{p_0}+\frac1{r_0}=\frac1{p_1}+\frac1{r_1}
		\qquad
		\text{and}
		\qquad
		\frac n{r_1}<1+\frac n{p_0}.
	\end{equation*}
	Further suppose that
	\begin{equation}\label{assumption:1}
		\frac{n-1}{p_1}<\frac n{p_0}\leq \frac n{p_1},
	\end{equation}
	or
	\begin{equation}\label{assumption:2}
		\frac{n-1}{r_0'}<\frac n{r_1'}\leq \frac n{r_0'}.
	\end{equation}
	Observe that \eqref{assumption:1} and \eqref{assumption:2} cannot hold simultaneously unless $p_0=p_1$ and $r_0=r_1$.
	
	Then, for any compactly supported $\chi(t)\in L^\infty(\mathbb{R})$, one has the dispersive estimate
	\begin{equation*}
		\left\|\int_\mathbb{R}f(x-tv,v)\chi(t)dt\right\|_{L^{p_0}_xL^{r_0}_v}
		\leq C\left\|f(x,v)\right\|_{L^{r_1}_xL^{p_1}_v},
	\end{equation*}
	for some constant $C>0$ that only depends on $\chi$ and fixed parameters.
\end{lem}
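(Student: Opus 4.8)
The plan is to feed the dispersive and Strichartz estimates \eqref{dispersion:1}, \eqref{dispersion:2} and \eqref{dispersion:3} for the transport flow $T_tf(x,v)=f(x-tv,v)$ into the time integration against $\chi$. Since, for each fixed $t$, the map $(x,v)\mapsto(x-tv,v)$ is measure preserving, $T_t$ is an isometry of every $L^q(\mathbb{R}^n_x\times\mathbb{R}^n_v)$ and its adjoint is $T_{-t}$; hence the operator $\mathcal{T}f=\int_{\mathbb{R}}T_tf\,\chi(t)\,dt$ has adjoint $\mathcal{T}^*g=\int_{\mathbb{R}}T_{-t}g\,\chi(t)\,dt$, of exactly the same form. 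The boundedness $\mathcal{T}\colon L^{r_1}_xL^{p_1}_v\to L^{p_0}_xL^{r_0}_v$ is therefore equivalent, by duality, to $\mathcal{T}^*\colon L^{p_0'}_xL^{r_0'}_v\to L^{r_1'}_xL^{p_1'}_v$. One checks that under the substitution $(r_1,p_1,p_0,r_0)\mapsto(p_0',r_0',r_1',p_1')$ the conservation relation and the bound $\tfrac n{r_1}<1+\tfrac n{p_0}$ are both invariant, while hypothesis \eqref{assumption:2} for the original parameters becomes hypothesis \eqref{assumption:1} for the dual ones. Thus it suffices to prove the lemma under \eqref{assumption:1}, the case \eqref{assumption:2} following by duality; and by density it is enough to argue for $f\in\mathcal{S}(\mathbb{R}^n\times\mathbb{R}^n)$.

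Under \eqref{assumption:1}, the heart of the argument is the two-line estimate
$$\Big\|\int_{\mathbb{R}}T_tf\,\chi(t)\,dt\Big\|_{L^{p_0}_xL^{r_0}_v}\le\int_{\mathbb{R}}|\chi(t)|\,\|T_tf\|_{L^{p_0}_xL^{r_0}_v}\,dt\le C\,\|\chi\|_{L^{q',c'}_t}\,\|T_tf\|_{L^{q,c}_tL^{p_0}_xL^{r_0}_v},$$
where the first inequality is Minkowski's integral inequality, which pulls the time integration directly inside the mixed norm with no reordering of the $x$, $v$ and $t$ variables, and the second is Hölder's inequality on the time line in the Lorentz scale. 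I would then invoke \eqref{dispersion:3} with the identification $(r,b,c,p)=(r_0,r_1,p_1,p_0)$: the equality $\tfrac1{p_0}+\tfrac1{r_0}=\tfrac1{r_1}+\tfrac1{p_1}$ is the required $\tfrac1p+\tfrac1r=\tfrac1b+\tfrac1c$; the chain $r_0<r_1<p_1<p_0$ follows from $r_1\le p_1$ together with $p_1\le p_0$ and $r_0\le r_1$, the latter two being consequences of $\tfrac n{p_0}\le\tfrac n{p_1}$ and of the conservation relation; and the structural condition $p<\tfrac n{n-1}c$ is exactly the left inequality $\tfrac{n-1}{p_1}<\tfrac n{p_0}$ of \eqref{assumption:1}. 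The Strichartz index is fixed by $\tfrac1q=n\big(\tfrac1{r_1}-\tfrac1{p_0}\big)$, so the integrability hypothesis $\tfrac n{r_1}<1+\tfrac n{p_0}$ is precisely $q>1$, which is exactly what makes the bounded, compactly supported kernel $\chi$ belong to $L^{q',c'}_t$ with finite norm. Combining these gives the stated bound.

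The delicate point, and the one requiring genuine care, is that \eqref{dispersion:3} demands the strict chain $0<r<b<c<p<\infty$, whereas \eqref{assumption:1} and the conservation relation only force the non-strict $r_0\le r_1\le p_1\le p_0$; the degenerate configurations must be treated by hand. When $r_1=p_1$ the source is the diagonal space $L^{r_1}(\mathbb{R}^n_x\times\mathbb{R}^n_v)$ and one runs the same Minkowski--Hölder scheme on the genuine Strichartz estimate \eqref{dispersion:2} instead: its scaling $\tfrac2a=\tfrac1{p_0}+\tfrac1{r_0}$ forces $a=r_1$, its time index again satisfies $\tfrac1q=n\big(\tfrac1{r_1}-\tfrac1{p_0}\big)$, and its admissibility requirement $a<q$ is once more the left inequality of \eqref{assumption:1}. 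When moreover $p_0=p_1$ and $r_0=r_1$ there is no genuine trade-off and one uses the bare dispersive estimate \eqref{dispersion:1}, the integrability hypothesis being exactly what renders $\int_{\mathbb{R}}|t|^{-n(1/r_1-1/p_0)}|\chi(t)|\,dt$ finite. The remaining one-sided degeneracies follow by interpolating these endpoints. A final bookkeeping check is that Hölder's inequality applies with the second Lorentz indices $c$ and $c'$; this is harmless here, since once $q>1$ the kernel $\chi\in L^\infty_c$ has finite Lorentz norms of every order, and one may alternatively invoke the embedding $L^{q,c}\subset L^q$ (valid for $c\le q$) already flagged in the text.
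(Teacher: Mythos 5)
Your proposal is correct and follows essentially the same route as the paper: the same reduction by duality between the regimes \eqref{assumption:1} and \eqref{assumption:2}, and the same three-way case split (pure dispersion \eqref{dispersion:1} when $(p_0,r_0)=(p_1,r_1)$, the Strichartz estimate \eqref{dispersion:2} when $r_1=p_1$, and the Lorentz-scale estimate \eqref{dispersion:3} via Minkowski and H\"older in $t$ for the strict chain), with the identifications $a=p_1$, $\frac1q=n(\frac1{r_1}-\frac1{p_0})$ and $p_0<\frac n{n-1}p_1$ matching the paper exactly. The only superfluous point is your closing appeal to interpolation for "remaining one-sided degeneracies": the conservation relation forces $r_0=r_1$ if and only if $p_0=p_1$, so under \eqref{assumption:1} the three cases you treat are already exhaustive.
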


\begin{proof}
	We distinguish four cases
	\begin{enumerate}
		\item $p_0=p_1$ and $r_0=r_1$,
		\item $1\leq r_0<r_1= p_1<p_0\leq\infty$,
		\item $1\leq r_0<r_1< p_1<p_0\leq\infty$,
		\item $1\leq r_1<r_0\leq p_0<p_1\leq\infty$,
	\end{enumerate}
	which we treat separately.
	
	\emph{We begin with the case $p_0=p_1$ and $r_0=r_1$.} A direct application of the dispersion estimate \eqref{dispersion:1} yields that
	\begin{equation}\label{estimate:8}
		\left\|\int_\mathbb{R}f(x-tv,v)\chi(t)dt\right\|_{L^{p_0}_xL^{r_0}_v}
		\leq \bigg\|\frac{\chi(t)}{|t|^{n\left(\frac1{r_0}-\frac1{p_0}\right)}}\bigg\|_{L^1}
		\left\|f(x,v)\right\|_{L^{r_0}_xL^{p_0}_v},
	\end{equation}
	which concludes the justification of this case upon noticing that $n\left(\frac1{r_0}-\frac1{p_0}\right)<1$.
	
	Observe that the range of applicability of the parameters $r_0$ and $p_0$ could be greatly expanded in the preceding estimate by supposing that the support of $\chi$ does not contain the origin. This observation is exploited in Lemma \ref{dispersive:2} below, where a wider array of dispersive estimates is provided under some restrictions on the cutoff $\chi(t)$.
	
	\emph{Next, we focus on the setting $1\leq r_0<r_1= p_1<p_0\leq\infty$.} This case necessarily falls in the range of parameters satisfying \eqref{assumption:1}. Denoting here
	\begin{equation*}
		\frac 2q=n\left(\frac1{r_0}-\frac 1{p_0}\right)
		\qquad\text{and}\qquad
		\frac 2a=\frac 1{p_0}+\frac 1{r_0}=\frac 2{p_1}=\frac 2{r_1},
	\end{equation*}
	we then observe that $a<q$ is equivalent to $\frac{n-1}{p_1}<\frac n{p_0}$ (note also that $\frac n{r_1}<1+\frac n{p_0}$ is now equivalent to $q>1$, which is a weaker constraint because $a>1$).
	Therefore, by virtue of the Strichartz estimate \eqref{dispersion:2}, we deduce that
	\begin{equation}\label{estimate:9}
		\left\|\int_\mathbb{R}f(x-tv,v)\chi(t)dt\right\|_{L^{p_0}_xL^{r_0}_v}
		\leq \|\chi\|_{L^{q'}}
		\left\|f(x-tv,v)\right\|_{L^q_tL^{p_0}_xL^{r_0}_v}
		\lesssim
		\left\|f(x,v)\right\|_{L^{a}_{x,v}},
	\end{equation}
	which completes the proof for this case.
	
	\emph{We handle now the case $1\leq r_0<r_1< p_1<p_0\leq\infty$.} Again, we are necessarily in the range of parameters \eqref{assumption:1}. Here, we denote
	\begin{equation*}
		\frac 1q=n\left(\frac1{r_1}-\frac 1{p_0}\right)
	\end{equation*}
	and observe that $q> 1$ holds by assumption.
	Therefore, employing \eqref{dispersion:3}, we find that
	\begin{equation*}
		\left\|\int_\mathbb{R}f(x-tv,v)\chi(t)dt\right\|_{L^{p_0}_xL^{r_0}_v}
		\leq \|\chi\|_{L^{q',p_1'}}
		\left\|f(x-tv,v)\right\|_{L^{q,p_1}_tL^{p_0}_xL^{r_0}_v}
		\lesssim
		\left\|f(x,v)\right\|_{L^{r_1}_xL^{p_1}_v},
	\end{equation*}
	which completes the proof for this case.
	
	It is to be emphasized here that the endpoint $q=1$ is not admitted in the preceding estimate because $\|\chi\|_{L^{\infty,\lambda}}=\infty$, for any $0<\lambda<\infty$, unless $\chi=0$.
	
	\emph{The remaining case $1\leq r_1<r_0\leq p_0<p_1\leq\infty$ follows by duality.} Indeed, observe that we are now in the range of parameters satisfying \eqref{assumption:2}. Then, we simply note that the adjoint operator to
	\begin{equation*}
		f(x,v)\mapsto\int_\mathbb{R}f(x-tv,v)\chi(t)dt
	\end{equation*}
	is given by
	\begin{equation*}
		g(x,v)\mapsto\int_\mathbb{R}g(x-tv,v)\chi(-t)dt.
	\end{equation*}
	Therefore, applying the estimates for the preceding cases to the adjoint operator, we infer the control
	\begin{equation*}
		\left\|\int_\mathbb{R}g(x-tv,v)\chi(-t)dt\right\|_{L^{r_1'}_xL^{p_1'}_v}
		\lesssim
		\|g(x,v)\|_{L^{p_0'}_xL^{r_0'}_v}.
	\end{equation*}
	The proof of the lemma is then concluded by duality.
\end{proof}

The next lemma is similar to the preceding result. It deals with the particular case of cutoffs $\chi(t)$ whose support does not contain the origin and offers a wider array of dispersive estimates under such an assumption. To be precise, notice that the constraint $\frac n{r_1}<1+\frac n{p_0}$ found in Lemma~\ref{dispersive:1} is removed from the statement below.

\begin{lem}\label{dispersive:2}
	Let $1\leq r_0\leq p_0\leq\infty$ and $1\leq r_1\leq p_1\leq\infty$ be such that
	\begin{equation*}
		\frac1{p_0}+\frac1{r_0}=\frac1{p_1}+\frac1{r_1}.
	\end{equation*}
	Further suppose that $(p_0,r_0)=(p_1,r_1)=(\infty,1)$ or
	\begin{equation}\label{assumption:3}
		\frac{n-1}{p_1}<\frac n{p_0}\leq \frac n{p_1},
	\end{equation}
	or
	\begin{equation}\label{assumption:4}
		\frac{n-1}{r_0'}<\frac n{r_1'}\leq \frac n{r_0'}.
	\end{equation}
	Observe that \eqref{assumption:3} and \eqref{assumption:4} cannot hold simultaneously unless $p_0=p_1$ and $r_0=r_1$.
	
	Then, for any compactly supported $\chi(t)\in L^\infty(\mathbb{R})$ such that the origin does not belong to the support of $\chi(t)$, one has the dispersive estimate
	\begin{equation*}
		\left\|\int_\mathbb{R}f(x-tv,v)\chi(t)dt\right\|_{L^{p_0}_xL^{r_0}_v}
		\leq C\left\|f(x,v)\right\|_{L^{r_1}_xL^{p_1}_v},
	\end{equation*}
	for some constant $C>0$ that only depends on $\chi$ and fixed parameters.
\end{lem}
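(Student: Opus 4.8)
The plan is to follow the four-case structure of the proof of Lemma~\ref{dispersive:1}, isolating precisely the two places where the hypothesis $\frac n{r_1}<1+\frac n{p_0}$ was used and showing that the assumption $0\notin\operatorname{supp}\chi$ renders it superfluous. The two building blocks are: first, the \emph{swap estimate} for $T_\chi f=\int_\mathbb{R}f(x-tv,v)\chi(t)dt$ mapping $L^{\rho}_xL^{\pi}_v\to L^{\pi}_xL^{\rho}_v$ for every $1\le\rho\le\pi\le\infty$, obtained by combining Minkowski's integral inequality with the pointwise dispersion estimate \eqref{dispersion:1}; and second, the Strichartz-type estimate $T_\chi\colon L^{a}_{x,v}\to L^{p}_xL^{r}_v$ with \emph{diagonal} source, coming from \eqref{dispersion:2} exactly as in \eqref{estimate:9}. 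The decisive observation is that, since $\chi$ vanishes near the origin, the constant $\big\|\chi(t)|t|^{-n(1/\rho-1/\pi)}\big\|_{L^1}$ controlling the swap estimate is finite for \emph{any} value of the dispersion exponent $n(1/\rho-1/\pi)$, whereas in Lemma~\ref{dispersive:1} finiteness forced $n(1/\rho-1/\pi)<1$.

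First I would dispose of the diagonal case $(p_0,r_0)=(p_1,r_1)$, which now includes the new endpoint $(\infty,1)$. Here the swap estimate applies verbatim as in \eqref{estimate:8}, the only difference with Lemma~\ref{dispersive:1} being that the bound $n(1/r_0-1/p_0)<1$ is no longer needed for the time integral to converge; in particular $(\infty,1)$, for which the dispersion exponent equals $n\ge1$, is now admissible. Next, under \eqref{assumption:3} one has $r_0\le r_1\le p_1\le p_0$, and when the source is itself diagonal ($r_1=p_1$) the Strichartz estimate \eqref{dispersion:2} is applied directly, reproducing the argument of \eqref{estimate:9}, with $q>1$ holding automatically because $a<q$ and $a>1$.

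The heart of the matter is the genuinely off-diagonal regime $r_0<r_1<p_1<p_0$, and in particular its previously forbidden part $q\le1$, i.e.\ $\frac n{r_1}\ge1+\frac n{p_0}$. Here I would obtain the estimate by complex interpolation between the diagonal-source Strichartz estimate and the swap estimate: choosing Strichartz exponents $(a,p,r)$ with $r<p$ and $p<\frac{n+1}{n-1}r$, swap exponents $\rho\le\pi$, and $\theta\in(0,1)$, one arranges that interpolation lands on the source $L^{r_1}_xL^{p_1}_v$ and the target $L^{p_0}_xL^{r_0}_v$. The relation $\frac1{p_0}+\frac1{r_0}=\frac1{p_1}+\frac1{r_1}$ is then reproduced automatically, since the Strichartz source is diagonal with $\frac2a=\frac1p+\frac1r$. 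Crucially, because the swap estimate is now available for arbitrarily large dispersion exponent $n(1/\rho-1/\pi)$, the interpolation can reach $q\le1$; in Lemma~\ref{dispersive:1} the restriction $n(1/\rho-1/\pi)<1$ confined the reachable range to $q>1$, which is exactly the content of the discarded hypothesis. The case \eqref{assumption:4} then follows by duality, the adjoint of $T_\chi$ being $T_{\check\chi}$ with $\check\chi(t)=\chi(-t)$, which again vanishes near the origin.

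The step I expect to be the main obstacle is the interpolation bookkeeping in this off-diagonal case: verifying that the system matching the interpolated source and target to $(L^{r_1}_xL^{p_1}_v,L^{p_0}_xL^{r_0}_v)$ admits a solution with all exponents in their admissible ranges ($1\le\rho\le\pi\le\infty$, $r<p$, and the Strichartz admissibility $p<\frac{n+1}{n-1}r$), and, most importantly, confirming that \eqref{assumption:3} is exactly the condition guaranteeing that such a solution exists. Some additional care will be required to handle the degenerate configurations in which one of the indices equals $1$ or $\infty$, where the interpolation endpoints must be selected by hand.
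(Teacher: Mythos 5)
Your proposal follows the paper's proof essentially step for step: the diagonal case via the swap estimate (whose time integral now converges for any dispersion exponent because $\chi$ avoids the origin, admitting the endpoint $(\infty,1)$), the diagonal-source case via the Strichartz bound as in \eqref{estimate:9}, the strictly off-diagonal case via complex interpolation between that Strichartz estimate and the extreme swap estimate $L^1_xL^\infty_v\to L^\infty_xL^1_v$, and \eqref{assumption:4} by duality. The interpolation bookkeeping you flag as the main obstacle is settled in the paper by fixing the swap endpoint at exactly $(1,\infty)\to(\infty,1)$ and choosing $\theta=\frac1{r_1}-\frac1{p_1}$, $\bar a=(1-\theta)p_1$, $\bar p=(1-\theta)p_0$, $\bar r=\frac{(1-\theta)r_0}{1-\theta r_0}$, under which the admissibility condition $\frac{n-1}{\bar a}<\frac n{\bar p}$ for the Strichartz leg is literally equivalent to \eqref{assumption:3}.
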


\begin{proof}
	First of all, we observe that estimate \eqref{estimate:8} holds now for any $1\leq r_0\leq p_0\leq\infty$ without the restriction $n\left(\frac1{r_0}-\frac1{p_0}\right)<1$ because $\chi(t)$ is supported away from the origin. This settles the case $(p_0,r_0)=(p_1,r_1)$.
	
	Furthermore, the cases $1\leq r_0<r_1= p_1<p_0\leq\infty$ and $1\leq r_1<r_0= p_0<p_1\leq\infty$ have already been treated in \eqref{estimate:9} and a duality argument provided at the end of the proof of Lemma~\ref{dispersive:1}. In fact, the same duality argument shows that the case $1\leq r_1< r_0\leq p_0< p_1\leq\infty$ follows straightfowardly from the estimates in the range $1\leq r_0< r_1\leq p_1< p_0\leq\infty$.
	
	All in all, there only remains to handle the setting $1\leq r_0< r_1<p_1< p_0<\infty$ with $\frac{n-1}{p_1}<\frac n{p_0}$, which we establish now by an interpolation argument. To this end, we define
	\begin{equation*}
		0<\theta<1
		\qquad\text{and}\qquad
		1\leq \bar r< \bar a< \bar p< \infty
	\end{equation*}
	by
	\begin{equation*}
		\theta=\frac 1{r_1}-\frac 1{p_1},
		\qquad
		\bar r=\frac{(1-\theta)r_0}{1-\theta r_0},
		\qquad
		\bar a=(1-\theta)p_1,
		\qquad
		\bar p=(1-\theta)p_0.
	\end{equation*}
	One can then readily verify that
	\begin{equation*}
		\begin{aligned}
			\frac 1{r_0}&=\frac{1-\theta}{\bar r}+\frac\theta 1,
			&&&
			\frac 1{p_0}&=\frac{1-\theta}{\bar p}+\frac\theta \infty,
			\\
			\frac 1{r_1}&=\frac{1-\theta}{\bar a}+\frac\theta 1,
			&&&
			\frac 1{p_1}&=\frac{1-\theta}{\bar a}+\frac\theta \infty,
		\end{aligned}
	\end{equation*}
	and
	\begin{equation*}
		\frac1{\bar p}+\frac1{\bar r}=\frac2{\bar a}
		\qquad\text{and}\qquad
		\frac{n-1}{\bar a}<\frac n{\bar p}.
	\end{equation*}
	In particular, an application of \eqref{estimate:9} yields that
	\begin{equation}\label{estimate:10}
		\left\|\int_\mathbb{R}f(x-tv,v)\chi(t)dt\right\|_{L^{\bar p}_xL^{\bar r}_v}
		\lesssim
		\left\|f(x,v)\right\|_{L^{\bar a}_{x,v}},
	\end{equation}
	which we are now about to interpolate with estimate \eqref{estimate:8} in the form
	\begin{equation}\label{estimate:11}
		\left\|\int_\mathbb{R}f(x-tv,v)\chi(t)dt\right\|_{L^{\infty}_xL^{1}_v}
		\lesssim
		\left\|f(x,v)\right\|_{L^{1}_xL^{\infty}_v}.
	\end{equation}
	In particular, recall that standard results from the theory of complex interpolation of Lebesgue spaces (e.g., see \cite[Theorem 2.2.6]{hvvl16}) show that
	\begin{equation*}
		\big(L^{\bar p_0}_xL^{\bar r_0}_v,L^\infty_xL^1_v\big)_{[\theta]}=L^{p_0}_xL^{r_0}_v
		\quad\text{and}\quad
		\big(L^{\bar a}_{x,v},L^1_xL^\infty_v\big)_{[\theta]}=L^{r_1}_xL^{p_1}_v.
	\end{equation*}
	Therefore, interpolating estimates \eqref{estimate:10} and \eqref{estimate:11} (these estimates remain valid for complex valued functions), we finally deduce that
	\begin{equation*}
		\left\|\int_\mathbb{R}f(x-tv,v)\chi(t)dt\right\|_{L^{p_0}_xL^{r_0}_v}
		\lesssim
		\left\|f(x,v)\right\|_{L^{r_1}_xL^{p_1}_v},
	\end{equation*}
	thereby completing the proof of the lemma.
\end{proof}

\begin{rema}
	The preceding two lemmas offer a spectrum of dispersive estimates wider than the results from \cite[Section~2]{am19}. They can be used to extend some results from \cite{am19}, namely Propositions 3.4 and 3.5 and Theorem 3.6 therein, to a larger range of parameters.
\end{rema}

At last, we combine the dispersive estimates from the preceding lemma with the energy method to produce our final theorem, which can be interpreted as a fusion of Theorem \ref{result:2} with Theorem~3.6 from \cite{am19}.

This new result generalizes greatly the duality principle provided by Theorem \ref{result:2}. Indeed, it shows that one can trade integrability between the particle density $f$ and the source term $v\cdot\nabla_x f$, at the same time as one transfers integrability between the variables $x$ and $v$, without affecting the resulting smoothness of velocity averages. The general principle below requires that the harmonic mean of all integrability parameters be equal to $2$. However, some additional restrictions on the range of applicable parameters are needed. These constraints remain natural, though, but the proof of their optimality would require suitable counterexamples.

\begin{thm}\label{result:8}
	Let $f(x,v)$ be compactly supported (in all variables) and such that
	\begin{equation*}
		\begin{aligned}
			f & \in L^{r_0}(\mathbb{R}^n_x;L^{p_0}(\mathbb{R}^n_v)),
			\\
			v\cdot\nabla_x f & \in L^{r_1}(\mathbb{R}^n_x;L^{p_1}(\mathbb{R}^n_v))\cap L^{r_2}(\mathbb{R}^n_x;L^{r_2'}(\mathbb{R}^n_v)),
		\end{aligned}
	\end{equation*}
	for some given $1< r_0\leq p_0\leq \infty$, $1<r_1\leq p_1<\infty$ and $\frac{2n}{n+1}<r_2\leq 2$  such that
	\begin{equation*}
		2=\frac1{r_0}+\frac1{p_0}+\frac1{r_1}+\frac1{p_1}
		\qquad
		\text{and}
		\qquad
		\frac{n-1}{p_0}<\frac n{r_1'}<\frac n{p_0}+\frac 1{p_1}.
	\end{equation*}
	Then $\tilde f$ belongs to $\dot H^\frac 12(\mathbb{R}^n_x)$.
	
	More precisely, for any compact set $K\subset\mathbb{R}^n_v$, there exists $C_{K}>0$ (which also depends on all integrability parameters) such that, for all compactly supported $f(x,v)$ vanishing for $v$ outside $K$,
	\begin{equation*}
		\big\|\tilde f\big\|_{\dot H^\frac12_x}^2
		\leq C_K \|f\|_{L^{r_0}_xL^{p_0}_v}
		\|v\cdot\nabla_xf\|_{L^{r_1}_xL^{p_1}_v}
		+C_K\|v\cdot\nabla_xf\|_{L^{r_2}_xL^{r_2'}_v}^2.
	\end{equation*}
	In particular, the constant $C_{K}$ is independent of the size of the support of $f(x,v)$ in $x$. Moreover, whenever $\frac{p_1}{r_1}< \frac{p_0}{r_0}+\frac{p_1}n$, the above inequality remains true for functions that are compactly supported in $v$ but not necessarily in $x$.
\end{thm}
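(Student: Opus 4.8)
The plan is to combine the energy identity \eqref{energy:5} with the parametrix representation \eqref{parametrix:1} and the dispersive estimates of Lemmas \ref{dispersive:1} and \ref{dispersive:2}. As in the proofs of Theorems \ref{result:1} and \ref{result:2}, I would first invoke the approximation lemmas from the appendix to reduce to $f\in C_c^\infty(\mathbb{R}^n\times\mathbb{R}^n)$, so that the manipulations below are rigorous. The energy identity then gives
\begin{equation*}
	\|\tilde f\|_{\dot H^\frac12_x}^2=\big\||D_x|^\frac12\tilde f\big\|_{L^2_x}^2
	\leq 2\pi\sup_{j}|\pi_{n-1}^jK|_{n-1}\operatorname{Re}\langle v\cdot\nabla_xf,M(D_x,D_v)f\rangle_{L^2_{x,v}},
\end{equation*}
so that everything reduces to estimating the bilinear pairing on the right. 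Since $f$ is only controlled in $L^{r_0}_xL^{p_0}_v$, which is in general not dual to the spaces holding the source term, the idea is to feed the parametrix into the factor $f$ inside $M(D_x,D_v)f$, writing $f=f_0+f_1$ with $f_0=\int_\mathbb{R} f(x-tv,v)\chi_0(t)\,dt$ and $f_1=\int_\mathbb{R}(v\cdot\nabla_xf)(x-tv,v)\chi_1(t)\,dt$, and then to split the pairing by linearity of $M(D_x,D_v)$.

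The crucial point, which I expect to be the main obstacle, is the choice of the cutoff $\chi_0$: I would deliberately take $\chi_0\in C_c^\infty(\mathbb{R})$ with $\int_\mathbb{R}\chi_0\,dt=1$ but whose support \emph{avoids the origin}. This is harmless for the parametrix identity (which only uses $\int\chi_0=1$), yet it is essential, because it lets the first piece $f_0$ be estimated by the wider dispersive estimate of Lemma \ref{dispersive:2} rather than Lemma \ref{dispersive:1}; in particular it removes the auxiliary constraint $\frac n{r_1}<1+\frac n{p_0}$ of Lemma \ref{dispersive:1}, which would otherwise be violated on part of the admissible range. Concretely, using the boundedness of $M(D_x,D_v)$ on the reflexive mixed space $L^{r_1'}_xL^{p_1'}_v$ (here $1<r_1\leq p_1<\infty$), Hölder's inequality and Lemma \ref{dispersive:2}, I would obtain
\begin{equation*}
	|\langle v\cdot\nabla_xf,M(D_x,D_v)f_0\rangle|
	\lesssim\|v\cdot\nabla_xf\|_{L^{r_1}_xL^{p_1}_v}\|f_0\|_{L^{r_1'}_xL^{p_1'}_v}
	\lesssim\|v\cdot\nabla_xf\|_{L^{r_1}_xL^{p_1}_v}\|f\|_{L^{r_0}_xL^{p_0}_v},
\end{equation*}
the dispersive estimate being applied with output $L^{r_1'}_xL^{p_1'}_v$ and input $L^{r_0}_xL^{p_0}_v$. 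Here the harmonic-mean identity $2=\frac1{r_0}+\frac1{p_0}+\frac1{r_1}+\frac1{p_1}$ is exactly the equal-sum condition $\frac1{r_1'}+\frac1{p_1'}=\frac1{r_0}+\frac1{p_0}$ demanded by the lemma, while the twofold constraint $\frac{n-1}{p_0}<\frac n{r_1'}<\frac n{p_0}+\frac1{p_1}$ is precisely equivalent to requiring that one of the geometric hypotheses \eqref{assumption:3} or \eqref{assumption:4} hold: when $\frac n{r_1'}\leq\frac n{p_0}$ the left inequality yields \eqref{assumption:3}, and when $\frac n{r_1'}>\frac n{p_0}$ the right inequality yields \eqref{assumption:4} (after rewriting $\frac n{r_0'}=\frac n{p_0}+\frac n{p_1}-\frac n{r_1'}$ via the equal-sum relation). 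Carrying out this translation between the theorem's parameters and the lemma's is the fiddly heart of the argument.

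For the second piece I would pair against the other available bound on the source term, namely $v\cdot\nabla_xf\in L^{r_2}_xL^{r_2'}_v$. Since $\chi_1$ necessarily contains the origin in its support, I must use Lemma \ref{dispersive:1} here, but now in the diagonal configuration $(p_0,r_0)=(p_1,r_1)=(r_2',r_2)$, for which the only requirement is $n\big(\frac1{r_2}-\frac1{r_2'}\big)<1$, that is $r_2>\frac{2n}{n+1}$, exactly the stated hypothesis. Thus, using $M$-boundedness on $L^{r_2'}_xL^{r_2}_v$, Hölder's inequality and Lemma \ref{dispersive:1},
\begin{equation*}
	|\langle v\cdot\nabla_xf,M(D_x,D_v)f_1\rangle|
	\lesssim\|v\cdot\nabla_xf\|_{L^{r_2}_xL^{r_2'}_v}\|f_1\|_{L^{r_2'}_xL^{r_2}_v}
	\lesssim\|v\cdot\nabla_xf\|_{L^{r_2}_xL^{r_2'}_v}^2.
\end{equation*}
Adding the two contributions yields the claimed inequality, with the constant $C_K$ depending on $K$ only through $\sup_j|\pi_{n-1}^jK|_{n-1}$ and hence independent of the $x$-support of $f$. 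Finally, to drop compactness in $x$ under the extra condition $\frac{p_1}{r_1}<\frac{p_0}{r_0}+\frac{p_1}n$, I would verify that it guarantees the constraints \eqref{constraint:1} and \eqref{constraint:2} and then invoke the approximation Lemmas \ref{approximation:3} and \ref{approximation:4}, exactly as in Theorem \ref{result:2}.
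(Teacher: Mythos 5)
Your proposal is correct and follows essentially the same route as the paper's own proof: the same energy identity \eqref{energy:5}, the same parametrix \eqref{parametrix:1} with $\chi_0$ supported away from the origin (precisely so that Lemma \ref{dispersive:2} applies to the $\chi_0$-piece and Lemma \ref{dispersive:1} in its diagonal case to the $\chi_1$-piece), the same translation of the hypothesis $\frac{n-1}{p_0}<\frac n{r_1'}<\frac n{p_0}+\frac 1{p_1}$ into the dichotomy \eqref{assumption:3}/\eqref{assumption:4}, and the same appeal to the appendix for the reduction to smooth functions and the removal of the $x$-compactness under $\frac{p_1}{r_1}<\frac{p_0}{r_0}+\frac{p_1}{n}$ (the paper only needs \eqref{constraint:1} and Lemma \ref{approximation:3} for this last step, not \eqref{constraint:2}, but that is immaterial).
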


\begin{proof}
	Thanks to Lemma \ref{approximation:5} from the appendix, observe that we only need to consider smooth compactly supported functions. Furthermore, noticing that $\frac{r_2'}{r_2}< 1+\frac{r_2'}n\leq \frac{p_0}{r_0}+\frac{r_2'}n$ because $\frac{2n}{n+1}<r_2\leq 2$, it is readily seen, if one additionally requires $\frac{p_1}{r_1}< \frac{p_0}{r_0}+\frac{p_1}n$, that the constraint \eqref{constraint:1} holds, which implies that the theorem extends to functions that are not necessarily compactly supported in $x$ by Lemma \ref{approximation:3}. Either way, we only need now to consider functions $f(x,v)\in C_c^\infty(\mathbb{R}^n\times\mathbb{R}^n)$.
	
	Following the energy method developed in Section \ref{section:2}, our starting point here is the estimate \eqref{energy:5}, which we then combine with the parametrix representation formula \eqref{parametrix:1} where $\chi_0(t)$ is chosen so that it is supported away from the origin. This leads to
	\begin{equation*}
		\begin{aligned}
			\big\||D_x|^\frac 12\tilde f\big\|_{L^2_x}^2
			&\leq C_K\operatorname{Re} \langle v\cdot\nabla_xf,M(D_x,D_v)f\rangle_{L^2_{x,v}}
			\\
			&= C_K
			\operatorname{Re} \Big\langle v\cdot\nabla_xf,M(D_x,D_v)\int_\mathbb{R}f(x-tv,v)\chi_0(t)dt\Big\rangle_{L^2_{x,v}}
			\\
			&\quad + C_K
			\operatorname{Re} \Big\langle v\cdot\nabla_xf,M(D_x,D_v)\int_\mathbb{R}v\cdot\nabla_xf(x-tv,v)\chi_1(t)dt\Big\rangle_{L^2_{x,v}},
		\end{aligned}
	\end{equation*}
	where $C_K=2\pi \sup_{j=1,\ldots,n}|\pi_{n-1}^jK|_{n-1}$ only depends on the compact set $K$ containing the $v$-support of $f$.
	
	Further recall from Section \ref{section:2} that the Fourier multiplier operator $M(D_x,D_v)$ defined by \eqref{multiplier:1} is bounded over any reflexive mixed Lebesgue space. It therefore follows that
	\begin{equation*}
		\begin{aligned}
			\big\|\tilde f\big\|_{\dot H^\frac12_x}^2
			& \lesssim \|v\cdot\nabla_xf\|_{L^{r_1}_xL^{p_1}_v}
			\left\|\int_\mathbb{R}f(x-tv,v)\chi_0(t)dt\right\|_{L^{r_1'}_xL^{p_1'}_v}
			\\
			&\quad +\|v\cdot\nabla_xf\|_{L^{r_2}_xL^{r_2'}_v}
			\left\|\int_\mathbb{R}v\cdot\nabla_x f(x-tv,v)\chi_1(t)dt\right\|_{L^{r_2'}_xL^{r_2}_v}.
		\end{aligned}
	\end{equation*}
	Finally, we observe that the integrability parameters satisfy
	\begin{equation*}
		\frac{n-1}{p_0}<\frac n{r_1'}\leq\frac n{p_0}
		\qquad\text{or}\qquad
		\frac {n}{p_0}\leq \frac n{r_1'}<\frac n{p_0}+\frac 1{p_1},
	\end{equation*}
	which is equivalent to
	\begin{equation*}
		\frac{n-1}{p_0}<\frac n{r_1'}\leq\frac n{p_0}
		\qquad\text{or}\qquad
		\frac {n-1}{p_1}< \frac n{r_0'}\leq\frac n{p_1},
	\end{equation*}
	respectively. This allows us to apply the dispersive inequalities from Lemmas \ref{dispersive:1} and \ref{dispersive:2} (recall that $\chi_0(t)$ is supported away from the origin) to the preceding estimate to deduce that
	\begin{equation*}
		\big\|\tilde f\big\|_{\dot H^\frac12_x}^2
		\lesssim \|v\cdot\nabla_xf\|_{L^{r_1}_xL^{p_1}_v}
		\|f\|_{L^{r_0}_xL^{p_0}_v}
		+\|v\cdot\nabla_xf\|_{L^{r_2}_xL^{r_2'}_v}^2,
	\end{equation*}
	which concludes the proof of the theorem.
\end{proof}

\begin{rema}
	While we have previously insisted on the fact that the contributions from this article sometimes intersect with the work from \cite{jlt20}, we would like to point out that the content of the present section is disjoint from the results from \cite{jlt20}, where the effects of dispersion on the energy method were not considered.
\end{rema}

The next two corollaries are mere restatements of the previous theorem where we set $r_2=r_1$ and exploit the fact that $f(x,v)$ is compactly supported in $v$. Their justification based on Theorem \ref{result:8} is elementary.

\begin{cor}\label{result:9}
	Let $f(x,v)$ be compactly supported (in all variables) and such that
	\begin{equation*}
		\begin{aligned}
			f & \in L^{r_0}(\mathbb{R}^n_x;L^{p_0}(\mathbb{R}^n_v)),
			\\
			v\cdot\nabla_x f & \in L^{r_1}(\mathbb{R}^n_x;L^{p_1}(\mathbb{R}^n_v)),
		\end{aligned}
	\end{equation*}
	for some given $1< r_0\leq p_0\leq \infty$ and $\frac{2n}{n+1}<r_1\leq 2\leq r_1'\leq p_1<\infty$  such that
	\begin{equation*}
		2=\frac1{r_0}+\frac1{p_0}+\frac1{r_1}+\frac1{p_1}
		\qquad
		\text{and}
		\qquad
		\frac{n-1}{p_0}<\frac n{r_1'}<\frac n{p_0}+\frac 1{p_1}.
	\end{equation*}
	Then $\tilde f$ belongs to $\dot H^\frac 12(\mathbb{R}^n_x)$.
	
	Moreover, whenever $\frac{p_1}{r_1}< \frac{p_0}{r_0}+\frac{p_1}n$, the result remains true for functions that are compactly supported in $v$ but not necessarily in $x$.
\end{cor}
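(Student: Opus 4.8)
The plan is to obtain Corollary \ref{result:9} as an immediate specialization of Theorem \ref{result:8} by setting $r_2 = r_1$ and invoking the compact support of $f$ in the velocity variable. First I would verify that the hypotheses of Theorem \ref{result:8} are met under this choice. The harmonic mean identity $2 = \frac1{r_0}+\frac1{p_0}+\frac1{r_1}+\frac1{p_1}$ is identical in both statements, and the two-sided bound $\frac{n-1}{p_0}<\frac n{r_1'}<\frac n{p_0}+\frac1{p_1}$ carries over verbatim. The range $1< r_0\leq p_0\leq\infty$ is assumed, the constraint $\frac{2n}{n+1}<r_2\leq 2$ becomes $\frac{2n}{n+1}<r_1\leq 2$ which is part of the hypothesis chain, and $1<r_1\leq p_1<\infty$ follows because $r_1>\frac{2n}{n+1}>1$ and $r_1\leq 2\leq r_1'\leq p_1<\infty$.

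The only point requiring an argument is that the single bound $v\cdot\nabla_x f\in L^{r_1}(\mathbb{R}^n_x;L^{p_1}(\mathbb{R}^n_v))$ supplies \emph{both} memberships demanded by Theorem \ref{result:8} when $r_2=r_1$, namely that it also yields $v\cdot\nabla_x f\in L^{r_1}(\mathbb{R}^n_x;L^{r_1'}(\mathbb{R}^n_v))$. Here I would exploit that $f$, and therefore $v\cdot\nabla_x f$, vanishes for $v$ outside some fixed compact set $K$. Since the corollary assumes $r_1'\leq p_1$, H\"older's inequality on $K$ gives the local embedding $L^{p_1}(K)\hookrightarrow L^{r_1'}(K)$, whence $\|v\cdot\nabla_x f\|_{L^{r_1}_xL^{r_1'}_v}\leq C_K\|v\cdot\nabla_x f\|_{L^{r_1}_xL^{p_1}_v}$ for a constant depending only on $K$.

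With both bounds established, Theorem \ref{result:8} applies directly---taking $r_2=r_1$ so that $L^{r_2}_xL^{r_2'}_v=L^{r_1}_xL^{r_1'}_v$---and produces $\tilde f\in\dot H^{\frac12}(\mathbb{R}^n_x)$ together with the associated quantitative estimate. The final assertion allowing non-compact support in $x$ is inherited verbatim from Theorem \ref{result:8}, since the relevant condition $\frac{p_1}{r_1}<\frac{p_0}{r_0}+\frac{p_1}n$ is the same in both statements. I anticipate no genuine obstacle: the elementary local embedding is the sole substantive ingredient, and it is precisely the step where the compactness of the $v$-support of $f$ is used.
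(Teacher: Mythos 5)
Your proposal is correct and follows exactly the paper's own proof: set $r_2=r_1$ in Theorem \ref{result:8} and use the local embedding $L^{p_1}(K)\subset L^{r_1'}(K)$ (valid since $p_1\geq r_1'$ and $f$ vanishes for $v$ outside the compact set $K$) to supply the second membership $v\cdot\nabla_x f\in L^{r_1}(\mathbb{R}^n_x;L^{r_1'}(\mathbb{R}^n_v))$. The verification that the remaining hypotheses and the non-compact-support clause carry over verbatim is likewise the same as in the paper.
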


\begin{proof}
	This result follows directly from Theorem \ref{result:8} by setting $r_2=r_1$ therein and observing that
	\begin{equation*}
		L^{r_1}(\mathbb{R}^n_x;L^{p_1}(K))
		\subset
		L^{r_1}(\mathbb{R}^n_x;L^{r_1'}(K)),
	\end{equation*}
	for any compact set $K\subset \mathbb{R}^n_v$, because $p_1\geq r_1'$.
\end{proof}

\begin{cor}\label{result:10}
	Let $f(x,v)$ be compactly supported in $v$ and such that
	\begin{equation*}
		\begin{aligned}
			f & \in L^{r_0}(\mathbb{R}^n_x;L^{p_0}(\mathbb{R}^n_v)),
			\\
			v\cdot\nabla_x f & \in L^{r_1}(\mathbb{R}^n_x;L^{r_1'}(\mathbb{R}^n_v)),
		\end{aligned}
	\end{equation*}
	for some given $1< r_0\leq p_0\leq \infty$ and $\frac{2n}{n+1}<r_1\leq 2$  such that
	\begin{equation*}
		\frac 2{r_1'}\leq \frac 1{r_0}+\frac 1{p_0}\leq 1
		\qquad
		\text{and}
		\qquad
		\frac {n-1}{r_1'}<\frac {n-1}{p_0}+\frac 1{r_0'}.
	\end{equation*}
	Then $\tilde f$ belongs to $\dot H^\frac 12(\mathbb{R}^n_x)$.
\end{cor}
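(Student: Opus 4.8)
The plan is to reduce everything to Theorem \ref{result:8} by choosing $r_2=r_1$ and selecting the remaining exponent $p_1$ so that the harmonic-mean identity holds by fiat. Concretely, I would define $p_1\in(1,\infty)$ through
\[
\frac1{p_1}=2-\frac1{r_0}-\frac1{p_0}-\frac1{r_1}=1-\frac1{r_0}-\frac1{p_0}+\frac1{r_1'},
\]
so that $\frac1{r_0}+\frac1{p_0}+\frac1{r_1}+\frac1{p_1}=2$ by construction. With this choice the whole argument becomes a matter of checking that the hypotheses of Theorem \ref{result:8} are met; the assumption that $f$ be compactly supported in $v$ is precisely what lets me manufacture the missing $L^{r_1}_xL^{p_1}_v$ bound on the source from the available $L^{r_1}_xL^{r_1'}_v$ bound.

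First I would extract the elementary consequences of the first hypothesis $\frac2{r_1'}\le\frac1{r_0}+\frac1{p_0}\le1$. The left inequality is equivalent to $\frac1{p_1}\le\frac1{r_1}$, i.e. $r_1\le p_1$, while the right inequality $\frac1{r_0}+\frac1{p_0}\le1$ gives $\frac1{p_1}\ge\frac1{r_1'}$, i.e. $p_1\le r_1'$ (and in particular $p_1<\infty$). Thus $1<r_1\le p_1\le r_1'<\infty$, so that all integrability requirements of Theorem \ref{result:8} hold. Since $f$, hence $v\cdot\nabla_xf$, vanishes for $v$ outside a fixed compact set $K$, the inclusion $L^{r_1'}(K)\subset L^{p_1}(K)$ (valid because $p_1\le r_1'$) applied fiberwise in $x$ turns the hypothesis $v\cdot\nabla_xf\in L^{r_1}_xL^{r_1'}_v$ into $v\cdot\nabla_xf\in L^{r_1}_xL^{p_1}_v$. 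Together with $v\cdot\nabla_xf\in L^{r_1}_xL^{r_1'}_v=L^{r_2}_xL^{r_2'}_v$, this supplies both source bounds demanded by Theorem \ref{result:8}.

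Next I would translate the spatial constraint $\frac{n-1}{p_0}<\frac n{r_1'}<\frac n{p_0}+\frac1{p_1}$ of Theorem \ref{result:8}. Substituting the value of $\frac1{p_1}$, the right-hand inequality is readily seen to be algebraically identical to the stated second hypothesis $\frac{n-1}{r_1'}<\frac{n-1}{p_0}+\frac1{r_0'}$. For the left-hand inequality, I would note that $r_0\le p_0$ together with $\frac1{r_0}+\frac1{p_0}\le1$ forces $\frac2{p_0}\le1$, hence $p_0\ge2$ and $\frac{n-1}{p_0}\le\frac{n-1}2$; on the other hand $r_1>\frac{2n}{n+1}$ gives $\frac1{r_1'}>\frac{n-1}{2n}$, i.e. $\frac n{r_1'}>\frac{n-1}2$. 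Combining the two yields $\frac{n-1}{p_0}\le\frac{n-1}2<\frac n{r_1'}$, as required.

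The only genuinely delicate point—and the one I expect to be the main obstacle—is justifying the conclusion for $f$ compactly supported in $v$ but not in $x$, which calls for the additional condition $\frac{p_1}{r_1}<\frac{p_0}{r_0}+\frac{p_1}n$ from the \emph{moreover} part of Theorem \ref{result:8}. This can nonetheless be settled from the bounds already in hand. If $r_1\ge n$ then $\frac1{r_1}-\frac1n\le0$ and the inequality is trivial; otherwise, using $p_1\le r_1'$ and $\tfrac{r_1'}{r_1}=r_1'-1$, I would estimate
\[
p_1\Big(\frac1{r_1}-\frac1n\Big)\le r_1'\Big(\frac1{r_1}-\frac1n\Big)=(r_1'-1)-\frac{r_1'}n,
\]
and observe that the assumption $r_1>\frac{2n}{n+1}$ is exactly equivalent to $(r_1'-1)-\frac{r_1'}n<1$. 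Since $r_0\le p_0$ gives $\frac{p_0}{r_0}\ge1$, it follows that $p_1\big(\frac1{r_1}-\frac1n\big)<1\le\frac{p_0}{r_0}$, which is the sought condition. With every hypothesis verified, a direct application of Theorem \ref{result:8} then yields $\tilde f\in\dot H^{1/2}(\mathbb R^n_x)$.
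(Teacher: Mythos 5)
Your proposal is correct and follows essentially the same route as the paper: define $p_1$ by $\frac1{p_1}=2-\frac1{r_0}-\frac1{p_0}-\frac1{r_1}$, check $r_1\le p_1\le r_1'$ and the two spatial constraints from the stated hypotheses, use the compact $v$-support to upgrade the source bound to $L^{r_1}_xL^{p_1}_v$, and apply Theorem \ref{result:8} with $r_2=r_1$ together with the verification of $\frac{p_1}{r_1}<\frac{p_0}{r_0}+\frac{p_1}{n}$ to dispense with compact support in $x$. All of your algebraic verifications are accurate (and in places more explicit than the paper's), so nothing further is needed.
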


\begin{proof}
	We define $p_1$ by
	\begin{equation*}
		\frac 1{p_1}=2-\frac 1{r_0}-\frac 1{p_0}-\frac 1{r_1}.
	\end{equation*}
	It is then easy to verify that
	\begin{equation*}
		r_1\leq p_1\leq r_1',
		\qquad
		\frac{n-1}{p_0}\leq\frac{n-1}2<\frac n{r_1'}
		\qquad
		\text{and}
		\qquad
		\frac n{r_1'}<\frac n{p_0}+\frac 1{p_1}.
	\end{equation*}
	In particular, using that $f$ is compactly supported in $v$, we infer that
	\begin{equation*}
		v\cdot\nabla_x f \in L^{r_1}(\mathbb{R}^n_x;L^{p_1}(\mathbb{R}^n_v)).
	\end{equation*}
	One may therefore apply Theorem \ref{result:8}, with $r_2=r_1$, to deduce that $\tilde f$ belongs to $\dot H^\frac 12$.
	
	Finally, since
	\begin{equation*}
		\frac{p_1}{r_1}<1+\frac{p_1}n\leq \frac{p_0}{r_0}+\frac{p_1}n
	\end{equation*}
	because $\frac{2n}{n+1}<r_1\leq p_1< \frac{2n}{n-1}$ and $r_0\leq p_0$,
	we conclude that $f(x,v)$ does not need to be compactly supported in $x$ in order to apply Theorem \ref{result:8}, which completes the justification of the corollary.
\end{proof}

In conclusion, this article establishes a considerable amount of new velocity averaging lemmas leading to a maximal smoothness of the averages in $\dot H^{1/2}$. It seems that the common underlying principle dictating the maximal regularity of averages consists in setting the harmonic mean of all integrability parameters at the value $2$. More work is now needed to understand the precise limits of applicability of such principles.

Note also that some interesting research directions are left open-ended here. For instance, one may naturally wonder now how the dispersive methods from the present section would fare in the settings from Section \ref{section:3}, which deal with singular sources. The combination of dispersive and hypoelliptic techniques in the energy method would likely come with some technical but interesting challenges, whose precise outcome remains unclear to us at the moment.

Finally, as previously mentioned in the introduction, recall that velocity averaging lemmas have important consequences on the regularity theory of non-linear systems, when kinetic formulations are available. This is best exemplified by the results from \cite{jp02} concerning multidimensional scalar conservation laws, one-dimensional isentropic gas dynamics and Ginzburg--Landau models in micromagnetics. We are hopeful that the results and methods from this article will find similar applications to kinetic formulations, thereby improving and expanding the current regularity theories for kinetic equations and hyperbolic systems.

\appendix

\section{Approximation procedures in kinetic transport equations}
\label{approximation:1}

We explain here how a standard use of Friedrichs' commutation method combined with renormalization techniques can be employed to build approximation procedures for the stationary kinetic transport equation \eqref{transport:1}. In particular, the following collection of lemmas shows that it is sufficient to establish velocity averaging estimates for smooth compactly supported functions in order to deduce more general results by density arguments.

\begin{lem}\label{approximation:2}
	Consider any set of parameters $1< p_0,p_1,q_0,q_1,r<\infty$ and $s\geq 0$. Let us assume that, for any compact set $K\subset\mathbb{R}^n$, there exists a finite constant $C_K>0$ such that the estimate
	\begin{equation}\label{general:1}
		\left\|\int_{K}fdv\right\|_{\dot W^{s,r}(\mathbb{R}^n_x)}\leq C_K
		\Big(\|f\|_{L^{p_0}(\mathbb{R}^n_x;L^{q_0}(\mathbb{R}^n_v))}
		+\|v\cdot\nabla_xf\|_{L^{p_1}(\mathbb{R}^n_x;L^{q_1}(\mathbb{R}^n_v))}\Big)
	\end{equation}
	holds for any $f(x,v)\in C_c^\infty(\mathbb{R}^n\times\mathbb{R}^n)$. Then, the above estimate remains valid for any $f(x,v)$ such that
	\begin{equation}\label{general:2}
		\begin{aligned}
			f&\in L^{p_0}(\mathbb{R}^n_x;L^{q_0}(\mathbb{R}^n_v))\cap L^{p_1}(\mathbb{R}^n_x;L^{q_1}(\mathbb{R}^n_v)),
			\\
			v\cdot\nabla_x f&\in L^{p_1}(\mathbb{R}^n_x;L^{q_1}(\mathbb{R}^n_v)).
		\end{aligned}
	\end{equation}
\end{lem}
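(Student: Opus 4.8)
The plan is to establish \eqref{general:1} for $f$ satisfying \eqref{general:2} through a density argument built on Friedrichs' commutation method. Concretely, I would produce a sequence $f_j\in C_c^\infty(\mathbb R^n\times\mathbb R^n)$ with $f_j\to f$ in $L^{p_0}(\mathbb R^n_x;L^{q_0}(\mathbb R^n_v))$ and $v\cdot\nabla_x f_j\to v\cdot\nabla_x f$ in $L^{p_1}(\mathbb R^n_x;L^{q_1}(\mathbb R^n_v))$. Applying the hypothesis to each $f_j$ yields
\begin{equation*}
	\left\|\int_K f_j\,dv\right\|_{\dot W^{s,r}}\le C_K\Big(\|f_j\|_{L^{p_0}_xL^{q_0}_v}+\|v\cdot\nabla_x f_j\|_{L^{p_1}_xL^{q_1}_v}\Big),
\end{equation*}
whose right-hand side converges to the bound claimed in \eqref{general:1}. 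For the left-hand side, Hölder's inequality in $v$ over the compact set $K$ gives $\|\int_K(f_j-f)\,dv\|_{L^{p_0}_x}\le|K|^{1/q_0'}\|f_j-f\|_{L^{p_0}_xL^{q_0}_v}\to0$, so that $\int_K f_j\,dv\to\int_K f\,dv$ in $\mathcal D'(\mathbb R^n_x)$. Since $1<r<\infty$, the functions $(-\Delta)^{s/2}\int_K f_j\,dv$ are bounded in the reflexive space $L^r$, and a weakly convergent subsequence together with weak lower semicontinuity of the $L^r$ norm transfers the bound to the limit, giving \eqref{general:1} for $f$.

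The construction of $f_j$ proceeds in three stages, reflecting the three ways in which $v\cdot\nabla_x$ interacts with the relevant operations. First I would truncate in velocity, replacing $f$ by $\psi(v/R)f$ with $\psi\in C_c^\infty$ and $\psi\equiv1$ near $0$. This step is harmless since $v\cdot\nabla_x[\psi(v/R)f]=\psi(v/R)\,v\cdot\nabla_x f$ (no derivative lands on $\psi$, which depends only on $v$), and both $\psi(v/R)f\to f$ and $\psi(v/R)\,v\cdot\nabla_x f\to v\cdot\nabla_x f$ in the respective norms by dominated convergence. Next I would truncate in space, multiplying by $\phi(x/R')$ with $R'\gg R$; here a commutator appears, namely $f\,\psi(v/R)\,R'^{-1}v\cdot(\nabla\phi)(x/R')$, whose $L^{p_1}_xL^{q_1}_v$ norm is controlled by $(R/R')\|\nabla\phi\|_\infty\|\psi(v/R)f\|_{L^{p_1}_xL^{q_1}_v}$ because $|v|\lesssim R$ on the support of $\psi(v/R)$. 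The finiteness of $\|\psi(v/R)f\|_{L^{p_1}_xL^{q_1}_v}$ is exactly where the extra membership $f\in L^{p_1}(L^{q_1})$ in the intersection \eqref{general:2} is used, and taking $R'\to\infty$ with $R$ fixed sends this commutator to zero. Finally I would mollify the resulting compactly supported function in all variables, $f^{(2)}*\rho_\epsilon$, to obtain smoothness.

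The key point lies in this last mollification, where the Friedrichs commutation lemma enters. Writing $r_\epsilon=v\cdot\nabla_x(f^{(2)}*\rho_\epsilon)-(v\cdot\nabla_x f^{(2)})*\rho_\epsilon$ and using that the vector field $(x,v)\mapsto(v,0)$ is smooth, linear and divergence-free, an integration by parts against the mollifier (now licit because $f^{(2)}$ is compactly supported, so the linear growth of $v$ is innocuous) shows that $r_\epsilon$ is an averaged difference quotient of $f^{(2)}$ whose limiting kernel vanishes since $\int\nabla\rho=0$; hence $r_\epsilon\to0$ in $L^{p_1}_xL^{q_1}_v$ by continuity of translations. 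Combined with $(v\cdot\nabla_x f^{(2)})*\rho_\epsilon\to v\cdot\nabla_x f^{(2)}$, this gives the required convergence. A diagonal choice $R\to\infty$, then $R'=R'(R)\to\infty$, then $\epsilon=\epsilon(R)\to0$ produces the sequence $f_j$.

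I expect the main obstacle to be precisely the non-commutativity of the transport operator $v\cdot\nabla_x$ with the spatial cutoff and with mollification, aggravated by the unboundedness of the coefficient $v$. Both difficulties are resolved structurally: the unboundedness forces truncating in $v$ first, the spatial-cutoff commutator is absorbed by the intersection hypothesis \eqref{general:2} together with the scale separation $R'\gg R$, and the mollification commutator is handled by Friedrichs' lemma. Everything else is routine, and the same scheme will underlie the remaining approximation lemmas of this appendix.
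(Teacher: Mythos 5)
Your proposal is correct and follows essentially the same route as the paper's proof: a Friedrichs-type commutator estimate for the mollification, a spatial cutoff whose transport commutator is absorbed precisely by the extra membership $f\in L^{p_1}(\mathbb{R}^n_x;L^{q_1}(\mathbb{R}^n_v))$ in \eqref{general:2}, and a passage to the limit on the left-hand side by duality/weak lower semicontinuity in the reflexive space $L^r$. The only cosmetic difference is that the paper merges your two truncation scales into a single joint cutoff $\chi(\epsilon x,\epsilon v)$, for which $v\cdot\nabla_x\chi(\epsilon x,\epsilon v)=(\epsilon v)\cdot(\nabla_x\chi)(\epsilon x,\epsilon v)$ is automatically bounded uniformly in $\epsilon$, so that mollification and cutoff are handled with one parameter instead of your hierarchy $R\ll R'$.
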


\begin{rema}
	The results in this appendix also apply to variants of the velocity averaging estimate \eqref{general:1}. For instance, similar results hold if one considers the inequality
	\begin{equation}\label{general:3}
		\left\|\int_{K}fdv\right\|_{\dot W^{s,r}(\mathbb{R}^n_x)}\leq C_K
		\|f\|_{L^{p_0}(\mathbb{R}^n_x;L^{q_0}(\mathbb{R}^n_v))}^{1-\theta}
		\|v\cdot\nabla_xf\|_{L^{p_1}(\mathbb{R}^n_x;L^{q_1}(\mathbb{R}^n_v))}^\theta,
	\end{equation}
	for some given $0<\theta<1$, instead of \eqref{general:1}.
\end{rema}

\begin{proof}
	Let us consider any $f(x,v)$ satisfying the bounds \eqref{general:2}. Then, taking some cutoff functions $\phi(x,v),\chi(x,v)\in C_c^\infty(\mathbb{R}^n\times\mathbb{R}^n)$ and defining
	\begin{equation*}
		\phi_{\epsilon}(x,v)=\epsilon^{-2n}\phi(x/\epsilon,v/\epsilon),
		\qquad
		\chi_{\epsilon}(x,v)=\chi(\epsilon x,\epsilon v),
	\end{equation*}
	with $0<\epsilon\leq 1$, a direct commutator estimate (akin to Friedrichs' commutation lemma; see \cite[Lemma 17.1.5]{h07}, for instance) yields that
	\begin{equation*}
		\begin{aligned}
			\|\phi_{\epsilon}*(\chi_{\epsilon} v\cdot\nabla_x f)
			-v\cdot\nabla_x(\phi_{\epsilon}* (\chi_{\epsilon} f))\|_{L^{p_1}_xL^{q_1}_v}\hspace{-60mm}&
			\\
			&\leq
			\|\phi_{\epsilon}*(v\cdot\nabla_x (\chi_{\epsilon} f))
			-v\cdot\nabla_x(\phi_{\epsilon}* (\chi_{\epsilon} f))\|_{L^{p_1}_xL^{q_1}_v}
			+\|\phi_{\epsilon}*(f v\cdot\nabla_x \chi_{\epsilon})
			\|_{L^{p_1}_xL^{q_1}_v}
			\\
			&=
			\|(v\cdot\nabla_x \phi_{\epsilon})*(\chi_{\epsilon}f)\|_{L^{p_1}_xL^{q_1}_v}
			+\|\phi_{\epsilon}*(f v\cdot\nabla_x \chi_{\epsilon})
			\|_{L^{p_1}_xL^{q_1}_v}
			\\
			&\leq
			\left(
			\|v\cdot\nabla_x\phi\|_{L^1_{x,v}}\|\chi\|_{L^\infty_{x,v}}
			+\|\phi\|_{L^1_{x,v}}\|v\cdot\nabla_x\chi\|_{L^\infty_{x,v}}\right)
			\|f\|_{L^{p_1}_xL^{q_1}_v}.
		\end{aligned}
	\end{equation*}
	In particular, since $C_c^\infty$ is dense in $L^{p_1}L^{q_1}$, further assuming $\int_{\mathbb{R}^{2n}}\phi(x,v) dxdv=1$ and $\chi(0,0)=1$, we obtain that
	\begin{equation*}
		\lim_{\epsilon\to 0}
		\|\phi_\epsilon*(\chi_\epsilon v\cdot\nabla_x f)-v\cdot\nabla_x(\phi_\epsilon* (\chi_\epsilon f))\|_{L^{p_1}_xL^{q_1}_v}
		=0,
	\end{equation*}
	whence
	\begin{equation*}
		\lim_{\epsilon\to 0}
		\|v\cdot\nabla_x(\phi_\epsilon* (\chi_\epsilon f))\|_{L^{p_1}_xL^{q_1}_v}
		=\|v\cdot\nabla_xf\|_{L^{p_1}_xL^{q_1}_v}.
	\end{equation*}
	
	Now, since $f_\epsilon=\phi_\epsilon* (\chi_\epsilon f)$ is compactly supported and smooth,
	one deduces from \eqref{general:1},
	for any $\psi(x)\in C_c^\infty(\mathbb{R}^n)$, that
	\begin{equation*}
		\begin{aligned}
			\left|\int_{\mathbb{R}^n}{\psi}|D_x|^s\int_Kfdvdx\right|
			& \leq
			\left|\int_{\mathbb{R}^n}{|D_x|^s\psi}
			\int_{K}(f-f_\epsilon)dvdx\right|
			+
			\left|\int_{\mathbb{R}^n}{\psi}|D_x|^s\int_{K}f_\epsilon dvdx\right|
			\\
			&\leq
			\big\||D_x|^s\psi\big\|_{L^{p_0'}_x}
			|K|^\frac 1{q_0'}
			\|f-f_\epsilon\|_{L^{p_0}_xL^{q_0}_v}
			\\
			&\quad+C_{K}\|\psi\|_{L^{r'}_x}
			\Big(\|f_\epsilon\|_{L^{p_0}_xL^{q_0}_v}
			+\|v\cdot\nabla_xf_\epsilon\|_{L^{p_1}_xL^{q_1}_v}\Big),
		\end{aligned}
	\end{equation*}
	whereby, letting $\epsilon$ tend to zero,
	\begin{equation*}
		\begin{aligned}
			\left|\int_{\mathbb{R}^n}{\psi}|D_x|^s\int_Kfdvdx\right|
			\leq
			C_{K}\|\psi\|_{L^{r'}_x}
			\Big(\|f\|_{L^{p_0}_xL^{q_0}_v}
			+\|v\cdot\nabla_xf\|_{L^{p_1}_xL^{q_1}_v}\Big).
		\end{aligned}
	\end{equation*}
	Finally, taking the supremum over all $\psi\in L^{r'}$ in the previous estimate establishes that \eqref{general:1} holds for every $f$ satisfying \eqref{general:2}.
\end{proof}

Using renormalization methods in conjunction with the previous lemma, we obtain the following result.

\begin{lem}\label{approximation:3}
	Consider any set of parameters $1< p_0,p_1,q_0,q_1,r<\infty$ and $s\geq 0$ such that
	\begin{equation}\label{constraint:1}
		\frac{q_1}{p_1}< \frac{q_0}{p_0}+\frac{q_1}n.
	\end{equation}
	Let us assume that, for any compact set $K\subset\mathbb{R}^n$, there exists a finite constant $C_K>0$ such that estimate \eqref{general:1}, or \eqref{general:3},
	holds for any $f(x,v)\in C_c^\infty(\mathbb{R}^n\times\mathbb{R}^n)$. Then \eqref{general:1}, respectively \eqref{general:3}, remains valid for any $f(x,v)$ such that
	\begin{equation*}
		\begin{aligned}
			f&\in L^{p_0}(\mathbb{R}^n_x;L^{q_0}(\mathbb{R}^n_v)),
			\\
			v\cdot\nabla_x f&\in L^{p_1}(\mathbb{R}^n_x;L^{q_1}(\mathbb{R}^n_v)).
		\end{aligned}
	\end{equation*}
\end{lem}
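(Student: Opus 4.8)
The plan is to reduce the statement to Lemma \ref{approximation:2}, whose hypotheses \eqref{general:2} differ from the present ones only by the additional requirement that $f\in L^{p_1}(\mathbb{R}^n_x;L^{q_1}(\mathbb{R}^n_v))$. Accordingly, the whole point will be to manufacture this missing integrability out of the two given bounds and the transport structure, and then to pass to a limit. A key observation streamlining the argument is that one does \emph{not} need a quantitative bound on the extra norm: since the right-hand side of \eqref{general:1} (resp.\ \eqref{general:3}) only features $\|f\|_{L^{p_0}_xL^{q_0}_v}$ and $\|v\cdot\nabla_xf\|_{L^{p_1}_xL^{q_1}_v}$, it will be enough to know that a suitable renormalization of $f$ lies in $L^{p_1}_xL^{q_1}_v$ \emph{qualitatively}.

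First I would renormalize. Fixing a smooth bounded $\beta$ with $\beta(s)=s$ near $s=0$ and $|\beta'|\le1$, set $\beta_\delta(s)=\delta^{-1}\beta(\delta s)$ and $h_\delta=\beta_\delta(f)$, for $0<\delta\le1$. Then $h_\delta$ is bounded, shares the compact $v$-support of $f$, satisfies $|h_\delta|\le|f|$ (so $h_\delta\in L^{p_0}_xL^{q_0}_v$), and obeys the renormalized transport identity $v\cdot\nabla_xh_\delta=\beta_\delta'(f)\,v\cdot\nabla_xf$, whence $|v\cdot\nabla_xh_\delta|\le|v\cdot\nabla_xf|$ and $v\cdot\nabla_xh_\delta\in L^{p_1}_xL^{q_1}_v$. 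Moreover $\beta_\delta(f)\to f$ and $\beta_\delta'(f)\to1$ pointwise as $\delta\to0$.

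The heart of the matter is to show that the bounded, $v$-compactly supported density $h_\delta$ automatically belongs to $L^{p_1}_xL^{q_1}_v$, and this is precisely where \eqref{constraint:1} is used. I would feed $h_\delta$ into the parametrix \eqref{parametrix:1}, choosing $\chi_0$ supported away from the origin. The term built from $h_\delta$ is controlled in $L^{P}_xL^{p_1}_v$ by $\|h_\delta\|_{L^{p_1}_xL^{P}_v}$ through the dispersive estimate \eqref{dispersion:1} with \emph{no} constraint on $P$ (since $\chi_0$ avoids the origin), while the term built from $v\cdot\nabla_xh_\delta$ is controlled in the same space provided $P$ stays below the Sobolev conjugate of $p_1$, i.e.\ $n\big(\tfrac1{p_1}-\tfrac1{P}\big)<1$, so that $|t|^{-n(1/p_1-1/P)}\chi_1\in L^1$; here the compact $v$-support lets me place $v\cdot\nabla_xh_\delta$ in $L^{p_1}_xL^{P}_v$ for $P\le q_1$, and the boundedness of $h_\delta$ lets me place $h_\delta$ in the various $L^{p_1}_xL^{P}_v$. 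This yields $h_\delta\in L^{P}_xL^{p_1}_v$ for $P$ up to (just below) the Sobolev conjugate of $p_1$. Interpolating this gained bound against the given $h_\delta\in L^{p_0}_xL^{q_0}_v$ and the trivial $L^\infty$ bound (again exploiting the compact $v$-support) then places $h_\delta$ in $L^{p_1}_xL^{q_1}_v$ exactly when $(1/p_1,1/q_1)$ lies in the associated interpolation hull, a condition which unwinds to \eqref{constraint:1}. Verifying that this admissibility of exponents is equivalent to \eqref{constraint:1}, and carrying out the dispersive estimates for both parametrix terms after a routine further mollification, is the main obstacle.

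Granting this, $h_\delta$ fulfills all the hypotheses \eqref{general:2} of Lemma \ref{approximation:2}, which therefore yields \eqref{general:1} (resp.\ \eqref{general:3}) for $h_\delta$, with a constant $C_K$ independent of $\delta$ and a right-hand side dominated by the same expression formed with $|f|$ and $|v\cdot\nabla_xf|$. I would then let $\delta\to0$: for every $\psi\in C_c^\infty(\mathbb{R}^n)$, dominated convergence (using $|h_\delta|\le|f|$, $|\beta_\delta'(f)\,v\cdot\nabla_xf|\le|v\cdot\nabla_xf|$ and the pointwise limits $\beta_\delta(f)\to f$, $\beta_\delta'(f)\to1$) lets me pass to the limit in the pairing $\int_{\mathbb{R}^n}\psi\,|D_x|^s\!\int_K h_\delta\,dv\,dx$ and in the right-hand norms; taking the supremum over $\psi$ recovers \eqref{general:1} for $f$, and the multiplicative variant \eqref{general:3} is handled identically.
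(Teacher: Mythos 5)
There is a genuine gap, and it sits exactly at the step you yourself flag as ``the main obstacle'': producing the missing membership $h_\delta\in L^{p_1}_xL^{q_1}_v$ needed to invoke Lemma \ref{approximation:2}. The problem is the regime $p_1<p_0$, which is allowed under \eqref{constraint:1} and occurs in the paper's main applications (e.g.\ Theorem \ref{result:1} with $p>2$, where $p_0=p$ and $p_1=p'$). Your renormalization $h_\delta=\beta_\delta(f)$ only gives $|h_\delta|\le\min(|f|,C\delta^{-1})$, which---together with the compact $v$-support---places $h_\delta$ in $L^{p}_xL^{q}_v$ only for $p\geq p_0$: since $h_\delta$ is \emph{not} compactly supported in $x$, neither boundedness nor interpolation with $L^\infty$ can lower the $x$-exponent below $p_0$. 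The dispersive estimate \eqref{dispersion:1} cannot help either, because it maps $L^r_xL^p_v\to L^p_xL^r_v$ with $r\le p$, i.e.\ it only \emph{raises} the $x$-exponent; in particular your claim that ``the boundedness of $h_\delta$ lets me place $h_\delta$ in the various $L^{p_1}_xL^P_v$'' is false when $p_1<p_0$, so the input for the first parametrix term in \eqref{parametrix:1} is unavailable and the scheme is circular. Even in the favorable regime $p_1\ge p_0$, the assertion that the interpolation-hull condition ``unwinds to \eqref{constraint:1}'' is not verified, and the various exponent restrictions you accumulate ($P\le q_1$ for the $\chi_1$-term, $P\ge p_1$ for dispersion, $P\le q_0p_1/p_0$ for the density term) are in tension and are not shown to be compatible under \eqref{constraint:1} alone.

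The paper's proof takes a different and more elementary route that resolves precisely this difficulty. It splits into two cases. When $\frac{q_1}{p_1}\le\frac{q_0}{p_0}$, it uses the renormalization \eqref{renormalization:1}, whose extra factor $\rho(\lambda^{-1}f)$ kills the region $\{|f|\le\lambda\}$ and yields the two-sided power bound $|h_\lambda|\le\lambda^{-|\alpha-1|}|f|^\alpha\mathds{1}_K(v)$ for \emph{every} $\alpha\ge0$; choosing $\alpha=p_0/p_1$ converts the $L^{p_0}_x$ bound into an $L^{p_1}_x$ bound regardless of whether $p_1<p_0$ or $p_1>p_0$ (H\"older in $v$ over $K$ handles the $v$-exponent, using $q_1p_0/p_1\le q_0$). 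In the complementary case, it additionally inserts a cutoff $\chi(\epsilon x)$, so that qualitative membership in all spaces is automatic from boundedness and compact support; the constraint \eqref{constraint:1} is then used quantitatively, and only there, to show that the commutator error $\epsilon(v\cdot\nabla\chi)(\epsilon x)h_\lambda$ vanishes in $L^{p_1}_xL^{q_1}_v$ for a suitable joint choice of $\epsilon$ and $\lambda$. Your final limiting argument and the reduction to Lemma \ref{approximation:2} are sound, but the middle step needs one of these two mechanisms (a power-type renormalization or an $x$-localization with a controlled commutator), not dispersion.
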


\begin{proof}
	The justification of this result relies on a combination of Lemma \ref{approximation:2} with renormalization techniques.
	
	\emph{We begin by assuming that $\frac{q_1}{p_1}\leq \frac{q_0}{p_0}$.} In this case, we introduce a cutoff function $\rho\in C^1(\mathbb{R})$ such that
	\begin{equation*}
		\mathds{1}_{\{|s|\geq 2\}}\leq \rho(s)\leq\mathds{1}_{\{|s|\geq 1\}}
		\quad\text{and}\quad
		|\rho'(s)|\leq 2
	\end{equation*}
	and we define the renormalizations
	\begin{equation}\label{renormalization:1}
		h_\lambda(x,v)=\frac{f(x,v)}{(1+\lambda^2 f(x,v)^2)^\frac 12}\rho\left(\lambda^{-1}f(x,v)\right)\mathds{1}_K(v),
	\end{equation}
	where $0<\lambda\leq 1$. In particular, for any $\alpha\geq 0$, one can show that
	\begin{equation}\label{renormalization:3}
		|h_\lambda|\leq \lambda^{-|\alpha-1|}|f|^{\alpha}\mathds{1}_K(v)
	\end{equation}
	and
	\begin{equation*}
		\begin{aligned}
			|v\cdot\nabla_x h_\lambda|&= |v\cdot\nabla_x f|
			\left|\frac {\rho'\left(f/\lambda\right)f/\lambda}{(1+\lambda^2f^2)^\frac 12}
			+\frac {\rho\left(f/\lambda\right)}{(1+\lambda^2f^2)^\frac 32}\right|\mathds{1}_K(v)
			\\
			&\leq 5|v\cdot\nabla_x f|\mathds{1}_K(v).
		\end{aligned}
	\end{equation*}
	It therefore follows that
	\begin{equation}\label{renormalization:2}
		\lim_{\lambda\to 0}\|h_\lambda\|_{L_x^{p_0}L_v^{q_0}} = \|f\mathds{1}_K(v)\|_{L_x^{p_0}L_v^{q_0}},
		\qquad
		\lim_{\lambda\to 0}\|v\cdot\nabla_x h_\lambda\|_{L_x^{p_1}L_v^{q_1}} = \|v\cdot\nabla_x f\mathds{1}_K(v)\|_{L_x^{p_1}L_v^{q_1}},
	\end{equation}
	and, recalling that $\frac{q_1p_0}{p_1}\leq q_0$,
	\begin{equation*}
		\begin{aligned}
			\|h_\lambda\|_{L_x^{p_1}L_v^{q_1}}
			& \leq \lambda^{-\left|\frac{p_0}{p_1}-1\right|}\left\||f|^{p_0/p_1}\mathds{1}_K(v)\right\|_{L_x^{p_1}L_v^{q_1}}
			\\ & =\lambda^{-\left|\frac{p_0}{p_1}-1\right|}\|f\mathds{1}_K(v)\|_{L_x^{p_0}L_v^{q_1p_0/p_1}}^{p_0/p_1}
			\\ & \leq C_{\lambda,K}\|f\|_{L_x^{p_0}L_v^{q_0}}^{p_0/p_1}<\infty.
		\end{aligned}
	\end{equation*}
	
	Since the renormalizations $h_\lambda$ satisfy the bounds \eqref{general:2}, we may now employ Lemma \ref{approximation:2} to deduce from \eqref{general:1} that
	\begin{equation*}
		\begin{aligned}
			\left\|\int_{K}fdv\right\|_{\dot W^{s,r}(\mathbb{R}^n_x)}
			&\leq \liminf_{\lambda\to 0}\left\|\int_{K}h_\lambda dv\right\|_{\dot W^{s,r}(\mathbb{R}^n_x)}
			\\
			&\leq C_K\lim_{\lambda\to 0}
			\Big(\|h_\lambda\|_{L^{p_0}(\mathbb{R}^n_x;L^{q_0}(\mathbb{R}^n_v))}
			+\|v\cdot\nabla_xh_\lambda\|_{L^{p_1}(\mathbb{R}^n_x;L^{q_1}(\mathbb{R}^n_v))}\Big)
			\\
			&= C_K
			\Big(\|f\|_{L^{p_0}(\mathbb{R}^n_x;L^{q_0}(\mathbb{R}^n_v))}
			+\|v\cdot\nabla_xf\|_{L^{p_1}(\mathbb{R}^n_x;L^{q_1}(\mathbb{R}^n_v))}\Big),
		\end{aligned}
	\end{equation*}
	which concludes the proof of the lemma when $\frac{q_1}{p_1}\leq \frac{q_0}{p_0}$.
	
	\emph{Next, we consider the case $\frac{q_0}{p_0}\leq \frac{q_1}{p_1}\leq \frac{q_0}{p_0}+\frac{q_1}n$.} In order to handle this setting, we need to further localize in the variable $x$. More precisely, for any $0<\epsilon,\lambda\leq 1$, we introduce now
	\begin{equation*}
		H_{\epsilon,\lambda}(x,v)=\chi(\epsilon x)h_\lambda(x,v),
	\end{equation*}
	where the renormalization $h_\lambda$ is defined in \eqref{renormalization:1} and the cutoff $\chi\in C_c^\infty(\mathbb{R}^n)$ satisfies
	\begin{equation*}
		\mathds{1}_{\{|x|\leq 1\}}\leq \chi(x)\leq\mathds{1}_{\{|x|\leq 2\}}.
	\end{equation*}
	Then, in view of \eqref{renormalization:2}, one has that
	\begin{equation}\label{renormalization:4}
		\lim_{\epsilon,\lambda\to 0}\|H_{\epsilon,\lambda}\|_{L_x^{p_0}L_v^{q_0}} = \|f\mathds{1}_K(v)\|_{L_x^{p_0}L_v^{q_0}}.
	\end{equation}
	
	Now, employing \eqref{renormalization:3}, we obtain that
	\begin{equation*}
		\begin{aligned}
			\|v\cdot\nabla_x H_{\epsilon,\lambda}\|_{L_x^{p_1}L_v^{q_1}}
			& \leq \|v\cdot\nabla_x h_\lambda\|_{L_x^{p_1}L_v^{q_1}}
			+\epsilon\|(v\cdot\nabla\chi)(\epsilon x) h_{\lambda}\|_{L_x^{p_1}L_v^{q_1}}
			\\
			& \leq \|v\cdot\nabla_x h_\lambda\|_{L_x^{p_1}L_v^{q_1}}
			+\lambda^{-|q_0-q_1|/q_1}\epsilon
			\left\||v|\mathds{1}_{\{1\leq|\epsilon x|\leq 2\}} f^{q_0/q_1}\mathds{1}_K(v)\right\|_{L_x^{p_1}L_v^{q_1}}.
		\end{aligned}
	\end{equation*}
	Further setting $\frac 1a=\frac{q_1}{p_1q_0}-\frac 1{p_0}\leq 0$, we deduce, by H\"older's inequality,
	\begin{equation*}
		\begin{aligned}
			\left\|\mathds{1}_{\{1\leq|\epsilon x|\leq 2\}} f^{q_0/q_1}\right\|_{L_x^{p_1}L_v^{q_1}}
			&=\left\|\mathds{1}_{\{1\leq|\epsilon x|\leq 2\}} f\right\|_{L_x^{p_1q_0/q_1}L_v^{q_0}}^{q_0/q_1}
			\\
			&\leq \left\|\mathds{1}_{\{1\leq|\epsilon x|\leq 2\}}\right\|_{L_x^a}^{q_0/q_1}
			\left\|f\right\|_{L_x^{p_0}L_v^{q_0}}^{q_0/q_1}
			\leq C\epsilon^{-\frac{nq_0}{aq_1}}\left\|f\right\|_{L_x^{p_0}L_v^{q_0}}^{q_0/q_1},
		\end{aligned}
	\end{equation*}
	where $C>0$ denotes a constant depending only on fixed parameters. Therefore, combining the preceding estimates, we arrive at the control
	\begin{equation*}
		\|v\cdot\nabla_x H_{\epsilon,\lambda}\|_{L_x^{p_1}L_v^{q_1}}
		\leq
		\|v\cdot\nabla_x h_\lambda\|_{L_x^{p_1}L_v^{q_1}}
		+C\lambda^{-\frac{|q_0-q_1|}{q_1}}\epsilon^{1-\frac{nq_0}{aq_1}}
		\left\|f\right\|_{L_x^{p_0}L_v^{q_0}}^{q_0/q_1}.
	\end{equation*}
	Observe that $\frac{nq_0}{aq_1}<1$ by virtue of \eqref{constraint:1}. Thus, we can assume from now on that the parameters $\epsilon$ and $\lambda$ are chosen ($\lambda=|\log\epsilon|^{-1}$ will do) so that
	\begin{equation*}
		\lim_{\epsilon,\lambda\to 0}\lambda^{-\frac{|q_0-q_1|}{q_1}}\epsilon^{1-\frac{nq_0}{aq_1}}=0.
	\end{equation*}
	Consequently, we infer, employing \eqref{renormalization:2}, that
	\begin{equation}\label{renormalization:5}
		\limsup_{\epsilon,\lambda\to 0}\|v\cdot\nabla_x H_{\epsilon,\lambda}\|_{L_x^{p_1}L_v^{q_1}}
		\leq
		\|v\cdot\nabla_x f\|_{L_x^{p_1}L_v^{q_1}}.
	\end{equation}
	
	All in all, recalling that $H_{\epsilon,\lambda}$ is compactly supported and bounded pointwise, an application of Lemma \ref{approximation:2} combined with \eqref{renormalization:4} and \eqref{renormalization:5} allows us to conclude that
	\begin{equation*}
		\begin{aligned}
			\left\|\int_{K}fdv\right\|_{\dot W^{s,r}(\mathbb{R}^n_x)}
			&\leq \liminf_{\epsilon,\lambda\to 0}\left\|\int_{K}H_{\epsilon,\lambda} dv\right\|_{\dot W^{s,r}(\mathbb{R}^n_x)}
			\\
			&\leq C_K\liminf_{\epsilon,\lambda\to 0}
			\Big(\|H_{\epsilon,\lambda}\|_{L^{p_0}(\mathbb{R}^n_x;L^{q_0}(\mathbb{R}^n_v))}
			+\|v\cdot\nabla_xH_{\epsilon,\lambda}\|_{L^{p_1}(\mathbb{R}^n_x;L^{q_1}(\mathbb{R}^n_v))}\Big)
			\\
			&= C_K
			\Big(\|f\|_{L^{p_0}(\mathbb{R}^n_x;L^{q_0}(\mathbb{R}^n_v))}
			+\|v\cdot\nabla_xf\|_{L^{p_1}(\mathbb{R}^n_x;L^{q_1}(\mathbb{R}^n_v))}\Big),
		\end{aligned}
	\end{equation*}
	which completes the proof of the lemma.
\end{proof}

An alternative renormalization and localization approach provides the following variation on Lemma \ref{approximation:3}.

\begin{lem}\label{approximation:4}
	Consider any set of parameters $1< p_0,p_1,q_0,q_1,r<\infty$ and $s\geq 0$ such that
	\begin{equation}\label{constraint:2}
		\frac{q_1}{p_1}< \frac{n-1}{n}+\frac{q_1}n.
	\end{equation}
	Let us assume that, for any compact set $K\subset\mathbb{R}^n$, there exists a finite constant $C_K>0$ such that estimate \eqref{general:1}, or \eqref{general:3},
	holds for any $f(x,v)\in C_c^\infty(\mathbb{R}^n\times\mathbb{R}^n)$. Then \eqref{general:1}, respectively \eqref{general:3}, remains valid for any $f(x,v)$ such that
	\begin{equation*}
		\begin{aligned}
			f&\in L^{p_0}(\mathbb{R}^n_x;L^{q_0}(\mathbb{R}^n_v)),
			\\
			v\cdot\nabla_x f&\in L^{p_1}(\mathbb{R}^n_x;L^{q_1}(\mathbb{R}^n_v)).
		\end{aligned}
	\end{equation*}
\end{lem}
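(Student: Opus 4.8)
The plan is to follow the same three-step scheme as in the proof of Lemma \ref{approximation:3}, keeping the Friedrichs reduction and the pointwise renormalization intact while replacing only the spatial localization by a transport-adapted argument that no longer appeals to the density bound in $L^{p_0}_xL^{q_0}_v$. First I would invoke Lemma \ref{approximation:2}, which already reduces the validity of \eqref{general:1} (or \eqref{general:3}) to those $f$ lying simultaneously in $L^{p_0}(\mathbb{R}^n_x;L^{q_0}(\mathbb{R}^n_v))$ and $L^{p_1}(\mathbb{R}^n_x;L^{q_1}(\mathbb{R}^n_v))$ with $v\cdot\nabla_xf\in L^{p_1}(\mathbb{R}^n_x;L^{q_1}(\mathbb{R}^n_v))$. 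Thus it suffices to approximate an arbitrary admissible $f$ by functions enjoying this extra global integrability, in such a way that the two norms appearing in \eqref{general:1} converge.

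For the renormalization I would reuse the truncations $h_\lambda$ from \eqref{renormalization:1}, together with the pointwise bounds already recorded in the proof of Lemma \ref{approximation:3}: one has $|h_\lambda|\le|f|\mathds 1_K(v)$ and $|v\cdot\nabla_xh_\lambda|\le 5|v\cdot\nabla_xf|\mathds 1_K(v)$, whence $\|h_\lambda\|_{L^{p_0}_xL^{q_0}_v}\to\|f\mathds 1_K\|_{L^{p_0}_xL^{q_0}_v}$ and $\|v\cdot\nabla_xh_\lambda\|_{L^{p_1}_xL^{q_1}_v}\to\|v\cdot\nabla_xf\mathds 1_K\|_{L^{p_1}_xL^{q_1}_v}$ as $\lambda\to0$. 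The functions $h_\lambda$ are bounded and supported in $v\in K$, hence locally of class $L^{p_1}_xL^{q_1}_v$; the only missing ingredient is their global decay in $x$, which I would restore by the spatial cutoff $H_{\epsilon,\lambda}(x,v)=\chi(\epsilon x)h_\lambda(x,v)$.

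The crux is to show that the transport commutator $\epsilon(v\cdot\nabla\chi)(\epsilon x)h_\lambda$ tends to $0$ in $L^{p_1}_xL^{q_1}_v$ along a suitable diagonal $\lambda=\lambda(\epsilon)\to0$, and to do so without invoking the $L^{p_0}_xL^{q_0}_v$ norm of $f$ on the annulus $\{1\le\epsilon|x|\le2\}$ (it is precisely the use of that norm which forces the factor $\frac{q_0}{p_0}$ in \eqref{constraint:1}). Here I would exploit the transport structure: since $v\cdot\nabla_xh_\lambda$ is controlled in $L^{p_1}_xL^{q_1}_v$ by the source, I would represent $h_\lambda$ by integrating its transport derivative along characteristics, in the spirit of the parametrix \eqref{parametrix:1}, and then estimate the resulting flow integral restricted to the far annulus by a dispersive bound of the type established in Lemmas \ref{dispersive:1} and \ref{dispersive:2}, using crucially that $|v|\le C$ on $K$, so that reaching the annulus from the bulk costs a large transport time. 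Because the characteristics are one-dimensional, the velocity integration only sees, morally, a set of effective measure $\sim\epsilon^{\,n-1}$ rather than $\sim1$, which produces a gain of $\epsilon^{(n-1)/q_1}$ over the crude bound $\epsilon^{\,1-n/p_1}$; the commutator is thereby dominated by $\epsilon^{\,1-(n/p_1-(n-1)/q_1)}\|v\cdot\nabla_xf\|_{L^{p_1}_xL^{q_1}_v}$, up to the harmless renormalization factor absorbed by the diagonal choice of $\lambda$. The constraint \eqref{constraint:2} is exactly the statement that this exponent is positive, and it is manifestly independent of $p_0$ and $q_0$.

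Granting this, $H_{\epsilon,\lambda}$ satisfies the hypotheses of Lemma \ref{approximation:2} for each fixed $(\epsilon,\lambda)$, while $\|H_{\epsilon,\lambda}\|_{L^{p_0}_xL^{q_0}_v}\to\|f\mathds 1_K\|_{L^{p_0}_xL^{q_0}_v}$ and $\limsup\|v\cdot\nabla_xH_{\epsilon,\lambda}\|_{L^{p_1}_xL^{q_1}_v}\le\|v\cdot\nabla_xf\mathds 1_K\|_{L^{p_1}_xL^{q_1}_v}$; passing to the limit in \eqref{general:1} (respectively \eqref{general:3}) then closes the argument exactly as in Lemma \ref{approximation:3}. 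I expect the genuine difficulty to lie entirely in the localization estimate of the third step: one must quantify the codimension-one gain coming from the characteristics and verify that the borderline exponent is governed precisely by \eqref{constraint:2}, including a careful treatment of the small-time part of the flow integral, where dispersion degenerates, by isolating it through the support of the cutoff, exactly as in the passage from Lemma \ref{dispersive:1} to Lemma \ref{dispersive:2}.
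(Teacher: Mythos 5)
Your overall scheme (Friedrichs reduction via Lemma \ref{approximation:2}, pointwise renormalization, spatial cutoff $\chi(\epsilon x)$, diagonal limit) matches the paper, and you have correctly identified both the heart of the matter --- a codimension-one gain of $\epsilon^{(n-1)/q_1}$ in the velocity integral over the commutator term $\epsilon(v\cdot\nabla\chi)(\epsilon x)h_\lambda$ --- and the fact that \eqref{constraint:2} is exactly positivity of the exponent $1+\frac{n-1}{q_1}-\frac{n}{p_1}$. But the mechanism you propose for obtaining that gain does not work. You want to represent $h_\lambda$ on the annulus $\{1\leq\epsilon|x|\leq 2\}$ via the parametrix \eqref{parametrix:1} and then invoke the dispersive bounds of Lemmas \ref{dispersive:1} and \ref{dispersive:2}. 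Three objections: (i) the parametrix has a term $\int h_\lambda(x-tv,v)\chi_0(t)dt$ that is not controlled by $\|v\cdot\nabla_xf\|_{L^{p_1}_xL^{q_1}_v}$ alone, so you would be forced to reintroduce a norm of $f$ itself on the far annulus, which is precisely the dependence on $(p_0,q_0)$ you are trying to eliminate; (ii) the dispersive lemmas carry their own exponent constraints (of the form $\frac{n-1}{p_1}<\frac{n}{p_0}$, etc.) which are unrelated to \eqref{constraint:2} and are not assumed here; and (iii) the heuristic that ``reaching the annulus from the bulk costs a large transport time'' is unavailable: $v$ ranges over the compact set $K$ and $t$ over the compact supports of $\chi_0,\chi_1$, so the characteristics $x-tv$ move only a bounded distance and never connect the far annulus to the bulk. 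Nothing in this setup produces the angular restriction on $v$ that your measure count $\sim\epsilon^{n-1}$ presupposes.

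The device the paper actually uses, and which you are missing, is a second, \emph{anisotropic} cutoff $\chi(\delta\pi_vx)$ with $\pi_vx=x-\bigl(x\cdot\frac{v}{|v|}\bigr)\frac{v}{|v|}$, inserted into the renormalization $H_{\epsilon,\delta,\lambda}=\chi(\epsilon x)\chi(\delta\pi_vx)\frac{f}{(1+\lambda^2f^2)^{1/2}}\mathds 1_K(v)$. This cutoff is annihilated by $v\cdot\nabla_x$ (since $\pi_vx$ is constant along characteristics), so it contributes nothing to the commutator; its only role is that on the support of $\epsilon(v\cdot\nabla\chi)(\epsilon x)$, where $|x|\sim 1/\epsilon$, the condition $\delta|x|\sin\theta\leq 2$ confines $v\in K$ to an angular sector of measure $\lesssim(\delta|x|)^{-(n-1)}$. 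A plain H\"older/measure computation then gives the bound $C\,\epsilon^{1+\frac{n-1}{q_1}-\frac{n}{p_1}}\lambda^{-1}\delta^{-\frac{n-1}{q_1}}$, which vanishes along $\delta=\lambda=|\log\epsilon|^{-1}$ under \eqref{constraint:2}; no parametrix and no dispersion are needed. Without this (or an equivalent transport-invariant localization transverse to $v$), your third step has no valid source for the claimed $\epsilon^{(n-1)/q_1}$ gain.
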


\begin{proof}
	We follow a strategy similar to the proof of Lemma \ref{approximation:3}.
	
	For any $0<\epsilon,\delta,\lambda\leq 1$, let us introduce the renormalization
	\begin{equation*}
		H_{\epsilon,\delta,\lambda}(x,v)=\chi(\epsilon x)\chi(\delta\pi_v x)\frac{f(x,v)}{(1+\lambda^2 f(x,v)^2)^\frac 12}\mathds{1}_K(v),
	\end{equation*}
	where the cutoff $\chi\in C_c^\infty(\mathbb{R}^n)$ satisfies
	\begin{equation*}
		\mathds{1}_{\{|x|\leq 1\}}\leq \chi(x)\leq\mathds{1}_{\{|x|\leq 2\}}
	\end{equation*}
	and the orthogonal projection $\pi_v$ is given by
	\begin{equation*}
		\pi_v x=
		x-\left(x\cdot\frac{v}{|v|}\right)\frac{v}{|v|}.
	\end{equation*}
	In particular, observe that $H_{\epsilon,\delta,\lambda}(x,v)$ is compactly supported and bounded pointwise. Therefore, we deduce from Lemma~\ref{approximation:2} that estimate \eqref{general:1} holds for $H_{\epsilon,\delta,\lambda}$, whereby
	\begin{equation*}
		\begin{aligned}
			\left\|\int_{K}fdv\right\|_{\dot W^{s,r}(\mathbb{R}^n_x)}
			&\leq \liminf_{\epsilon,\delta,\lambda\to 0}\left\|\int_{K}H_{\epsilon,\delta,\lambda} dv\right\|_{\dot W^{s,r}(\mathbb{R}^n_x)}
			\\
			&\leq C_K\liminf_{\epsilon,\delta,\lambda\to 0}
			\Big(\|H_{\epsilon,\delta,\lambda}\|_{L^{p_0}(\mathbb{R}^n_x;L^{q_0}(\mathbb{R}^n_v))}
			+\|v\cdot\nabla_xH_{\epsilon,\delta,\lambda}\|_{L^{p_1}(\mathbb{R}^n_x;L^{q_1}(\mathbb{R}^n_v))}\Big).
		\end{aligned}
	\end{equation*}
	
	Now, since $C_c^\infty$ is dense in $L^{p_0}L^{q_0}$ and $L^{p_1}L^{q_1}$, it is readily seen that
	\begin{equation*}
		\lim_{\epsilon,\delta,\lambda\to 0}\|H_{\epsilon,\delta,\lambda}\|_{L_x^{p_0}L_v^{q_0}} = \|f\mathds{1}_K(v)\|_{L_x^{p_0}L_v^{q_0}}
	\end{equation*}
	and
	\begin{equation*}
		\lim_{\epsilon,\delta,\lambda\to 0}
		\left\|\chi\left(\epsilon x\right)
		\chi\left(\delta \pi_v x\right)
		\frac{v\cdot\nabla_xf}{(1+\lambda^2 f^2)^\frac 32}\mathds{1}_K(v)\right\|_{L_x^{p_1}L_v^{q_1}}
		= \|v\cdot\nabla_x f\mathds{1}_K(v)\|_{L_x^{p_1}L_v^{q_1}}.
	\end{equation*}
	Therefore, noticing that
	\begin{equation*}
		\begin{aligned}
			v\cdot\nabla_x H_{\epsilon,\delta,\lambda}(x,v)&=\epsilon(v\cdot\nabla\chi)\left(\epsilon x\right)
			\chi\left(\delta \pi_v x\right)
			\frac{f(x,v)}{(1+\lambda^2 f(x,v)^2)^\frac 12}\mathds{1}_K(v)
			\\
			&\quad+\chi\left(\epsilon x\right)
			\chi\left(\delta \pi_v x\right)
			\frac{v\cdot\nabla_xf(x,v)}{(1+\lambda^2 f(x,v)^2)^\frac 32}\mathds{1}_K(v),
		\end{aligned}
	\end{equation*}
	there only remains to show
	\begin{equation}\label{limit:1}
		\lim_{\epsilon,\delta,\lambda\to 0}
		\left\|\epsilon(v\cdot\nabla\chi)\left(\epsilon x\right)
		\chi\left(\delta \pi_v x\right)
		\frac{f}{(1+\lambda^2 f^2)^\frac 12}\mathds{1}_K(v)\right\|_{L_x^{p_1}L_v^{q_1}}
		= 0
	\end{equation}
	provided the approximation parameters $\epsilon$, $\delta$ and $\lambda$ are suitably chosen as they vanish simultaneously.
	
	To this end, denoting the angle between $x$ and $v$ by $\theta\in[0,\pi]$, so that $|\pi_vx|=|x|\sin\theta$, and various independent constants by $C>0$, observe that
	\begin{equation*}
		\begin{aligned}
			\left\|\epsilon(v\cdot\nabla\chi)\left(\epsilon x\right)
			\chi\left(\delta \pi_v x\right)
			\frac{f}{(1+\lambda^2 f^2)^\frac 12}\mathds{1}_K(v)\right\|_{L_x^{p_1}L_v^{q_1}}
			\hspace{-40mm}&
			\\
			&\leq
			\frac{\epsilon}{\lambda}\|\nabla\chi\|_{L^\infty}\big\||v|\mathds{1}_K(v)\big\|_{L^\infty}
			\left\|\mathds{1}_{\{1\leq\epsilon|x|\leq 2\}}
			\left\|\mathds{1}_{\{\delta|x|\sin\theta\leq 2\}}\mathds{1}_K(v)\right\|_{L_v^{q_1}}
			\right\|_{L_x^{p_1}}
			\\
			&\leq
			C\frac{\epsilon}{\lambda}
			\left\|
			\frac{\mathds{1}_{\{1\leq\epsilon|x|\leq 2\}}}{(\delta|x|)^\frac{n-1}{q_1}}
			\right\|_{L_x^{p_1}}
			\leq C\frac{\epsilon^{1+\frac{n-1}{q_1}-\frac n{p_1}}}{\lambda\delta^{\frac{n-1}{q_1}}}.
		\end{aligned}
	\end{equation*}
	Then, noticing that \eqref{constraint:2} implies $1+\frac{n-1}{q_1}-\frac n{p_1}>0$, we conclude that it is always possible to choose $\epsilon$, $\delta$ and $\lambda$ (e.g., take $\delta=\lambda=|\log\epsilon|^{-1}$) such that
	\begin{equation*}
		\lim_{\epsilon,\delta,\lambda\to 0}\frac{\epsilon^{1+\frac{n-1}{q_1}-\frac n{p_1}}}{\lambda\delta^{\frac{n-1}{q_1}}}=0,
	\end{equation*}
	thereby establishing the limit \eqref{limit:1} and, thus, completing the proof of the lemma.
\end{proof}

\begin{rema}
	The problem of extending the velocity averaging estimate \eqref{general:1} for smooth compactly supported functions to more general functional settings by approximation procedures becomes much easier to settle when one replaces the space $L^{p_1}_xL^{q_1}_v$ by $L^{q_1}_vL^{p_1}_x$ in the right-hand side of \eqref{general:1} or \eqref{general:3}. Indeed, in this case, it is possible to adapt, with minor changes, the preceding proof without requiring the constraint \eqref{constraint:2}.
	
	More precisely, in order to perform such an adaptation, the only delicate step consists in verifying that the limit \eqref{limit:1} still holds when one replaces the $L^{p_1}_xL^{q_1}_v$-norm by an $L^{q_1}_vL^{p_1}_x$-norm, for any $1<p_1,q_1<\infty$ and an appropriate choice of parameters $\epsilon$, $\delta$ and $\lambda$. This follows from the estimate
	\begin{equation*}
		\begin{aligned}
			\left\|\epsilon(v\cdot\nabla\chi)\left(\epsilon x\right)
			\chi\left(\delta \pi_v x\right)
			\frac{f}{(1+\lambda^2 f^2)^\frac 12}\mathds{1}_K(v)\right\|_{L_v^{q_1}L_x^{p_1}}
			\hspace{-60mm}&
			\\
			&\leq
			\frac{\epsilon}{\lambda}\|\nabla\chi\|_{L^\infty}\big\||v|\mathds{1}_K(v)\big\|_{L^\infty}
			\left\|
			\left\|\mathds{1}_{\{1\leq\epsilon|x|\leq 2\}}\mathds{1}_{\{\delta|x|\sin\theta\leq 2\}}\right\|_{L_x^{p_1}}
			\mathds{1}_K(v)\right\|_{L_v^{q_1}}
			\leq C\frac{\epsilon^{1-\frac 1{p_1}}}{\lambda\delta^{\frac{n-1}{p_1}}},
		\end{aligned}
	\end{equation*}
	whereby, setting $\delta=\lambda=|\log\epsilon|^{-1}$,
	\begin{equation*}
		\lim_{\epsilon,\delta,\lambda\to 0}
		\left\|\epsilon(v\cdot\nabla\chi)\left(\epsilon x\right)
		\chi\left(\delta \pi_v x\right)
		\frac{f}{(1+\lambda^2 f^2)^\frac 12}\mathds{1}_K(v)\right\|_{L_v^{q_1}L_x^{p_1}}
		= 0
	\end{equation*}
	because $p_1>1$.
\end{rema}

The next approximation lemma is only useful when neither \eqref{constraint:1} nor \eqref{constraint:2} hold.

\begin{lem}\label{approximation:5}
	Consider any set of parameters $1< p_0,p_1,q_0,q_1,r<\infty$ and $s\geq 0$. Let us assume that, for any compact set $K\subset\mathbb{R}^n$, there exists a finite constant $C_K>0$ such that the estimate \eqref{general:1}, or \eqref{general:3}, holds for any $f(x,v)\in C_c^\infty(\mathbb{R}^n\times\mathbb{R}^n)$. Then \eqref{general:1}, respectively \eqref{general:3}, remains valid for any compactly supported $f(x,v)$ such that
	\begin{equation*}
		\begin{aligned}
			f&\in L^{p_0}(\mathbb{R}^n_x;L^{q_0}(\mathbb{R}^n_v)),
			\\
			v\cdot\nabla_x f&\in L^{p_1}(\mathbb{R}^n_x;L^{q_1}(\mathbb{R}^n_v)).
		\end{aligned}
	\end{equation*}
\end{lem}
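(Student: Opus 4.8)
The plan is to reduce the general statement to Lemma \ref{approximation:2} by a single renormalization step, exploiting the compact support of $f$ to bypass the delicate integrability-gaining and spatial localization arguments that were needed in Lemmas \ref{approximation:3} and \ref{approximation:4}. The key observation is that Lemma \ref{approximation:2} applies to any function lying in \emph{both} $L^{p_0}_xL^{q_0}_v$ and $L^{p_1}_xL^{q_1}_v$, with $v\cdot\nabla_x$ of it in $L^{p_1}_xL^{q_1}_v$; the only obstruction in the present setting is that $f$ is not assumed to belong to $L^{p_1}_xL^{q_1}_v$. Since $f$ is compactly supported, any bounded function sharing its support automatically lies in every mixed Lebesgue space, so it suffices to approximate $f$ by bounded, compactly supported functions without destroying the bound on $v\cdot\nabla_xf$.

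First I would introduce the renormalization
\begin{equation*}
	h_\lambda(x,v)=\frac{f(x,v)}{\bigl(1+\lambda^2f(x,v)^2\bigr)^{1/2}},
	\qquad 0<\lambda\leq 1.
\end{equation*}
One checks immediately that $|h_\lambda|\leq\min\{|f|,\lambda^{-1}\}$ and that $h_\lambda\to f$ pointwise as $\lambda\to 0$, while a direct computation gives
\begin{equation*}
	v\cdot\nabla_xh_\lambda=\frac{v\cdot\nabla_xf}{\bigl(1+\lambda^2f^2\bigr)^{3/2}},
\end{equation*}
so that $|v\cdot\nabla_xh_\lambda|\leq|v\cdot\nabla_xf|$ and $v\cdot\nabla_xh_\lambda\to v\cdot\nabla_xf$ pointwise. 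Because $h_\lambda$ is bounded by $\lambda^{-1}$ and supported in the fixed compact set containing the support of $f$, it belongs to every space $L^{p}_xL^{q}_v$; in particular $h_\lambda\in L^{p_0}_xL^{q_0}_v\cap L^{p_1}_xL^{q_1}_v$, and the pointwise domination places $v\cdot\nabla_xh_\lambda$ in $L^{p_1}_xL^{q_1}_v$. Thus each $h_\lambda$ satisfies the hypotheses \eqref{general:2} of Lemma \ref{approximation:2}, and that lemma yields estimate \eqref{general:1} (respectively \eqref{general:3}) for $h_\lambda$.

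It then remains to pass to the limit $\lambda\to 0$. On the right-hand side, dominated convergence in the mixed norms---justified by the bounds $|h_\lambda|\leq|f|$ and $|v\cdot\nabla_xh_\lambda|\leq|v\cdot\nabla_xf|$ together with $1<p_0,q_0,p_1,q_1<\infty$, applied first in the inner $v$-integral and then in $x$---gives $\|h_\lambda\|_{L^{p_0}_xL^{q_0}_v}\to\|f\|_{L^{p_0}_xL^{q_0}_v}$ and $\|v\cdot\nabla_xh_\lambda\|_{L^{p_1}_xL^{q_1}_v}\to\|v\cdot\nabla_xf\|_{L^{p_1}_xL^{q_1}_v}$. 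On the left-hand side, since $K$ has finite measure, Hölder's inequality in $v$ gives $\bigl\|\int_K(h_\lambda-f)\,dv\bigr\|_{L^{p_0}_x}\leq|K|^{1/q_0'}\|h_\lambda-f\|_{L^{p_0}_xL^{q_0}_v}\to 0$, so $\int_Kh_\lambda\,dv\to\int_Kf\,dv$ in $L^{p_0}_x$, hence in $\mathcal{S}'$. Combining this distributional convergence with the uniform $\dot W^{s,r}$-bound just obtained, the lower semicontinuity of the $\dot W^{s,r}$ seminorm yields $\bigl\|\int_Kf\,dv\bigr\|_{\dot W^{s,r}}\leq\liminf_{\lambda\to 0}\bigl\|\int_Kh_\lambda\,dv\bigr\|_{\dot W^{s,r}}$, and passing to the limit in the estimate for $h_\lambda$ completes the proof. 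The only genuinely delicate point is this final lower-semicontinuity step, which follows from the reflexivity of $L^r$ (so that $(-\Delta)^{s/2}\int_Kh_\lambda\,dv$ admits a weakly convergent subsequence) and the uniqueness of distributional limits; everything else is routine, and the compact support of $f$ is precisely what makes a single renormalization suffice, thereby removing any need for the constraints \eqref{constraint:1} or \eqref{constraint:2}.
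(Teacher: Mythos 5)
Your argument coincides with the paper's own proof: the same renormalization $h_\lambda=f/(1+\lambda^2f^2)^{1/2}$, the same observation that boundedness plus compact support places $h_\lambda$ in the hypotheses of Lemma \ref{approximation:2}, the same domination $|v\cdot\nabla_xh_\lambda|\leq|v\cdot\nabla_xf|$, and the same passage to the limit via lower semicontinuity of the $\dot W^{s,r}$ seminorm. Your write-up merely spells out the dominated-convergence and weak-compactness details that the paper leaves implicit; the proof is correct.
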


\begin{rema}
	Observe that, since \eqref{general:1} and \eqref{general:3} can be localized in $v$, it is sufficient to assume that $f(x,v)$ is compactly supported in $x$ only.
\end{rema}

\begin{proof}
	We utilize here the renormalizations
	\begin{equation*}
		h_\lambda(x,v)=\frac{f(x,v)}{(1+\lambda^2 f(x,v)^2)^\frac 12},
	\end{equation*}
	where $0<\lambda\leq 1$. In particular, one has that
	\begin{equation*}
		h_\lambda\to f
		\qquad\text{in }L^{p_0}_xL^{q_0}_v
	\end{equation*}
	and
	\begin{equation*}
		\|v\cdot\nabla_xh_\lambda\|_{L^{p_1}_xL^{q_1}_v}
		=\left\|\frac{v\cdot\nabla_xf}{(1+\lambda^2 f^2)^\frac 32}\right\|_{L^{p_1}_xL^{q_1}_v}
		\leq\|v\cdot\nabla_xf\|_{L^{p_1}_xL^{q_1}_v}.
	\end{equation*}
	
	Now, since each $h_\lambda(x,v)$ is bounded pointwise and compactly supported, an application of Lemma~\ref{approximation:2} yields that \eqref{general:1} holds for $h_\lambda$. It follows that
	\begin{equation*}
		\begin{aligned}
			\left\|\int_{K}fdv\right\|_{\dot W^{s,r}(\mathbb{R}^n_x)}
			&\leq\liminf_{\lambda\to 0}
			\left\|\int_{K}h_\lambda dv\right\|_{\dot W^{s,r}(\mathbb{R}^n_x)}
			\\
			&\leq \liminf_{\lambda\to 0} C_K
			\Big(\|h_\lambda\|_{L^{p_0}(\mathbb{R}^n_x;L^{q_0}(\mathbb{R}^n_v))}
			+\|v\cdot\nabla_xh_\lambda\|_{L^{p_1}(\mathbb{R}^n_x;L^{q_1}(\mathbb{R}^n_v))}\Big)
			\\
			&\leq C_K
			\Big(\|f\|_{L^{p_0}(\mathbb{R}^n_x;L^{q_0}(\mathbb{R}^n_v))}
			+\|v\cdot\nabla_xf\|_{L^{p_1}(\mathbb{R}^n_x;L^{q_1}(\mathbb{R}^n_v))}\Big),
		\end{aligned}
	\end{equation*}
	which concludes the proof.
\end{proof}

Finally, we provide an approximation lemma for velocity averaging estimates set in Sobolev spaces that are based on mixed Lebesgue norms. Note that, in this setting, it is in general not possible to use renormalization techniques so easily.

\begin{lem}\label{approximation:6}
	Consider any set of parameters $1< p_0,p_1,q_0,q_1,r<\infty$ and $a,b,\alpha,\beta,\sigma\in\mathbb{R}$. Let us assume that, for any compact set $K\subset\mathbb{R}^n$, there exists a finite constant $C_K>0$ such that the estimate
	\begin{equation}\label{general:4}
		\begin{aligned}
			\left\|\int_{K}fdv\right\|_{W^{\sigma,r}(\mathbb{R}^n_x)} & \leq C_K
			\Big(\|(1-\Delta_x)^\frac a 2(1-\Delta_v)^\frac \alpha 2f\|_{L^{p_0}(\mathbb{R}^n_x;L^{q_0}(\mathbb{R}^n_v))}
			\\
			&\quad +\|(1-\Delta_x)^\frac b 2(1-\Delta_v)^\frac \beta 2v\cdot\nabla_xf\|_{L^{p_1}(\mathbb{R}^n_x;L^{q_1}(\mathbb{R}^n_v))}\Big)
		\end{aligned}
	\end{equation}
	holds for any $f(x,v)\in C_c^\infty(\mathbb{R}^n\times\mathbb{R}^n)$ vanishing for $v$ outside $K$. Then, by possibly increasing the value of $C_K$, the above estimate remains valid for any compactly supported $f(x,v)$ vanishing for $v$ outside $K$ such that
	\begin{equation}\label{general:5}
		\begin{aligned}
			(1-\Delta_x)^\frac a 2(1-\Delta_v)^\frac \alpha 2f&\in L^{p_0}(\mathbb{R}^n_x;L^{q_0}(\mathbb{R}^n_v)),
			\\
			(1-\Delta_x)^\frac b 2(1-\Delta_v)^\frac \beta 2f&\in
			L^{p_1}(\mathbb{R}^n_x;L^{q_1}(\mathbb{R}^n_v)),
			\\
			(1-\Delta_x)^\frac b 2(1-\Delta_v)^\frac \beta 2
			v\cdot\nabla_x f&\in L^{p_1}(\mathbb{R}^n_x;L^{q_1}(\mathbb{R}^n_v)).
		\end{aligned}
	\end{equation}
\end{lem}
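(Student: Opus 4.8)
The plan is to replace the renormalization scheme of the previous lemmas---which is unavailable here because the pointwise nonlinearity does not commute with the Bessel potentials---by a pure mollification (Friedrichs) argument. Fix a standard mollifier $\phi\in C_c^\infty(\mathbb{R}^n\times\mathbb{R}^n)$ with $\iint\phi=1$, set $\phi_\epsilon(x,v)=\epsilon^{-2n}\phi(x/\epsilon,v/\epsilon)$, and define $f_\epsilon=\phi_\epsilon*f$ (convolution in all variables). Since $f$ is compactly supported, each $f_\epsilon$ lies in $C_c^\infty(\mathbb{R}^n\times\mathbb{R}^n)$ and vanishes for $v$ outside a fixed compact set $K'\supset K$ (an $\epsilon_0$-neighbourhood of $K$). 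Integrating in $v$ one checks that $\widetilde{f_\epsilon}=\bar\phi_\epsilon*\tilde f$, where $\bar\phi_\epsilon(z)=\int\phi_\epsilon(z,u)\,du$ is a mollifier in $x$ alone, so that $\widetilde{f_\epsilon}\to\tilde f$ in $\mathcal{D}'(\mathbb{R}^n_x)$. Applying the hypothesis \eqref{general:4} with the compact set $K'$ to each $f_\epsilon$, the game will be to show that the right-hand side for $f_\epsilon$ converges to the right-hand side for $f$; then, since $1<r<\infty$ makes $W^{\sigma,r}$ reflexive, the uniform bound on $\{\widetilde{f_\epsilon}\}$ together with its distributional convergence to $\tilde f$ will yield $\tilde f\in W^{\sigma,r}$ with the desired estimate by weak lower semicontinuity of the norm.

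Write $\Lambda_0=(1-\Delta_x)^{a/2}(1-\Delta_v)^{\alpha/2}$, $\Lambda_1=(1-\Delta_x)^{b/2}(1-\Delta_v)^{\beta/2}$ and $L=v\cdot\nabla_x$. Because the Bessel potentials are Fourier multipliers, they commute with $\phi_\epsilon*$, so $\Lambda_0 f_\epsilon=\phi_\epsilon*(\Lambda_0 f)$, which converges to $\Lambda_0 f$ in $L^{p_0}(\mathbb{R}^n_x;L^{q_0}(\mathbb{R}^n_v))$ by the first bound in \eqref{general:5} and the standard mollification convergence in reflexive mixed Lebesgue spaces ($1<p_0,q_0<\infty$). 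For the source term I would record, exactly as in the proof of Lemma \ref{approximation:2} (see also \cite[Lemma 17.1.5]{h07}), the Friedrichs commutator identity
\[
	\bigl[L,\phi_\epsilon*\bigr]g=\Psi_\epsilon*g,\qquad \Psi_\epsilon(z,u)=u\cdot(\nabla_x\phi_\epsilon)(z,u),
\]
obtained by an integration by parts in $x$. A rescaling shows that $\|\Psi_\epsilon\|_{L^1}$ is independent of $\epsilon$ and that $\iint\Psi_\epsilon=0$, so that $\Psi_\epsilon*$ is uniformly bounded on $L^{p_1}(\mathbb{R}^n_x;L^{q_1}(\mathbb{R}^n_v))$ (by Young's inequality for mixed norms) and $\Psi_\epsilon*g\to0$ there for every such $g$, by convergence on the dense subset $C_c^\infty$ combined with the uniform bound.

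The crux is the source-term convergence. Since $\Lambda_1$ and the convolution $\Psi_\epsilon*$ are both translation invariant, they commute, whence
\[
	\Lambda_1(Lf_\epsilon)=\Lambda_1\phi_\epsilon*(Lf)+\Lambda_1\bigl[L,\phi_\epsilon*\bigr]f
	=\phi_\epsilon*(\Lambda_1 Lf)+\Psi_\epsilon*(\Lambda_1 f).
\]
The first term tends to $\Lambda_1 Lf$ in $L^{p_1}(\mathbb{R}^n_x;L^{q_1}(\mathbb{R}^n_v))$ by the third bound in \eqref{general:5}, while the second tends to $0$ in the same space by the Friedrichs step above, using the second bound $\Lambda_1 f\in L^{p_1}(\mathbb{R}^n_x;L^{q_1}(\mathbb{R}^n_v))$ of \eqref{general:5}. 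This is exactly where that seemingly superfluous hypothesis is needed, and it is the main obstacle: after the Bessel potential $\Lambda_1$ has been moved through the commutator, one is forced to apply the commutator to $\Lambda_1 f$ rather than to $f$, so $\Lambda_1 f$ must be controlled in its own right---something the renormalization approach cannot deliver, precisely because it does not commute with $\Lambda_1$. Combining the two convergences gives $\|\Lambda_1(Lf_\epsilon)\|_{L^{p_1}_xL^{q_1}_v}\to\|\Lambda_1 Lf\|_{L^{p_1}_xL^{q_1}_v}$, which together with the first paragraph completes the argument.
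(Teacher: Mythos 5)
Your proposal is correct and follows essentially the same route as the paper's proof: mollify in all variables, commute the Bessel potentials (as Fourier multipliers) past the convolution and the Friedrichs commutator so that the commutator kernel $v\cdot\nabla_x\phi_\epsilon$ (mean zero, uniformly bounded in $L^1$) acts on $(1-\Delta_x)^{b/2}(1-\Delta_v)^{\beta/2}f$ --- which is exactly why the second bound in \eqref{general:5} is imposed --- and conclude by weak lower semicontinuity. The paper's argument is the same, including your observation that renormalization is unavailable here because it does not commute with the Bessel potentials.
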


\begin{rema}
	It is readily seen from the proof below that Lemma \ref{approximation:6} also holds if one replaces \eqref{general:4} by the more general estimate
	\begin{equation*}
		\begin{aligned}
			\left\|\int_{K}fdv\right\|_{W^{\sigma,r}}^2 & \leq C_K
			\Big(\|(1-\Delta_x)^\frac a 2(1-\Delta_v)^\frac \alpha 2f\|_{L^{p_0}_xL^{q_0}_v}
			\|(1-\Delta_x)^\frac b 2(1-\Delta_v)^\frac \beta 2v\cdot\nabla_xf\|_{L^{p_1}_xL^{q_1}_v}
			\\
			&\quad +\|(1-\Delta_x)^\frac c 2(1-\Delta_v)^\frac \gamma 2f\|_{L^{p_2}_xL^{q_2}_v}\Big),
		\end{aligned}
	\end{equation*}
	where $1< p_0,p_1,p_2,q_0,q_1,q_2,r<\infty$ and $a,b,c,\alpha,\beta,\gamma,\sigma\in\mathbb{R}$,
	and  \eqref{general:5} by the bounds
	\begin{equation*}
		\begin{aligned}
			(1-\Delta_x)^\frac a 2(1-\Delta_v)^\frac \alpha 2f&\in L^{p_0}(\mathbb{R}^n_x;L^{q_0}(\mathbb{R}^n_v)),
			\\
			(1-\Delta_x)^\frac b 2(1-\Delta_v)^\frac \beta 2f&\in
			L^{p_1}(\mathbb{R}^n_x;L^{q_1}(\mathbb{R}^n_v)),
			\\
			(1-\Delta_x)^\frac c 2(1-\Delta_v)^\frac \gamma 2f&\in
			L^{p_2}(\mathbb{R}^n_x;L^{q_2}(\mathbb{R}^n_v)),
			\\
			(1-\Delta_x)^\frac b 2(1-\Delta_v)^\frac \beta 2
			v\cdot\nabla_x f&\in L^{p_1}(\mathbb{R}^n_x;L^{q_1}(\mathbb{R}^n_v)).
		\end{aligned}
	\end{equation*}
\end{rema}

\begin{proof}
	We follow a strategy similar to the proof of Lemma \ref{approximation:2}. Thus, we start by considering any compactly supported $f(x,v)$ satisfying the bounds \eqref{general:5}. Then, taking some cutoff function $\phi(x,v)\in C_c^\infty(\mathbb{R}^n\times\mathbb{R}^n)$ and defining
	\begin{equation*}
		\phi_{\epsilon}(x,v)=\epsilon^{-2n}\phi(x/\epsilon,v/\epsilon),
	\end{equation*}
	with $0<\epsilon\leq 1$, a direct commutator estimate, \emph{\`a la Friedrichs,} yields that
	\begin{equation*}
		\begin{aligned}
			\left\|(1-\Delta_x)^\frac b 2(1-\Delta_v)^\frac \beta 2
			\big(\phi_{\epsilon}*(v\cdot\nabla_x f)
			-v\cdot\nabla_x(\phi_{\epsilon}* f)\big)\right\|_{L^{p_1}_xL^{q_1}_v}\hspace{-60mm}&
			\\
			&=
			\left\|(1-\Delta_x)^\frac b 2(1-\Delta_v)^\frac \beta 2(v\cdot\nabla_x \phi_{\epsilon})*f\right\|_{L^{p_1}_xL^{q_1}_v}
			\\
			&=
			\left\|(v\cdot\nabla_x \phi_{\epsilon})*(1-\Delta_x)^\frac b 2(1-\Delta_v)^\frac \beta 2f\right\|_{L^{p_1}_xL^{q_1}_v}
			\\
			&\leq\|v\cdot\nabla_x\phi\|_{L^1_{x,v}}
			\left\|(1-\Delta_x)^\frac b 2(1-\Delta_v)^\frac \beta 2f\right\|_{L^{p_1}_xL^{q_1}_v}.
		\end{aligned}
	\end{equation*}
	In particular, since the Schwartz space $\mathcal{S}$ is stable under the action of the differential operator $(1-\Delta_x)^\frac b 2(1-\Delta_v)^\frac \beta 2$ and is dense in $L^{p_1}L^{q_1}$, we obtain, further assuming $\int_{\mathbb{R}^{2n}}\phi(x,v) dxdv=1$, that
	\begin{equation*}
		\lim_{\epsilon\to 0}
		\left\|(1-\Delta_x)^\frac b 2(1-\Delta_v)^\frac \beta 2
		\big(\phi_{\epsilon}*(v\cdot\nabla_x f)
		-v\cdot\nabla_x(\phi_{\epsilon}* f)\big)\right\|_{L^{p_1}_xL^{q_1}_v}
		=0,
	\end{equation*}
	whence
	\begin{equation*}
		\lim_{\epsilon\to 0}
		\left\|(1-\Delta_x)^\frac b 2(1-\Delta_v)^\frac \beta 2
		\big(v\cdot\nabla_x(\phi_{\epsilon}* f)\big)\right\|_{L^{p_1}_xL^{q_1}_v}
		=\left\|(1-\Delta_x)^\frac b 2(1-\Delta_v)^\frac \beta 2
		\big(v\cdot\nabla_xf\big)\right\|_{L^{p_1}_xL^{q_1}_v}.
	\end{equation*}
	Similarly, it holds that
	\begin{equation*}
		\lim_{\epsilon\to 0}
		\left\|(1-\Delta_x)^\frac a 2(1-\Delta_v)^\frac \alpha 2
		(\phi_{\epsilon}* f)\right\|_{L^{p_0}_xL^{q_0}_v}
		=\left\|(1-\Delta_x)^\frac a 2(1-\Delta_v)^\frac \alpha 2
		f\right\|_{L^{p_0}_xL^{q_0}_v}.
	\end{equation*}
	
	Now, since $f_\epsilon=\phi_\epsilon* f$ is compactly supported and smooth,
	one deduces from \eqref{general:4} that
	\begin{equation*}
		\begin{aligned}
			\left\|\int_{\widetilde K}f_\epsilon dv\right\|_{W^{\sigma,r}} & \leq C_{\widetilde K}
			\Big(\|(1-\Delta_x)^\frac a 2(1-\Delta_v)^\frac \alpha 2f_\epsilon\|_{L^{p_0}_xL^{q_0}_v}
			\\
			&\quad +\|(1-\Delta_x)^\frac b 2(1-\Delta_v)^\frac \beta 2v\cdot\nabla_xf_\epsilon\|_{L^{p_1}_xL^{q_1}_v}\Big),
		\end{aligned}
	\end{equation*}
	where $\widetilde K\subset\mathbb{R}^n$ is a compact set containing the support in $v$ of $f_\epsilon$ for all $0<\epsilon\leq 1$.
	Finally, letting $\epsilon$ tend to zero, we infer, by weak lower-semicontinuity of norms, that
	\begin{equation*}
		\begin{aligned}
			\left\|\int_{K}f dv\right\|_{W^{\sigma,r}}
			& \leq
			\liminf_{\epsilon\to0}\left\|\int_{\widetilde K}f_\epsilon dv\right\|_{W^{\sigma,r}}
			\\
			& \leq C_{\widetilde K}
			\Big(\|(1-\Delta_x)^\frac a 2(1-\Delta_v)^\frac \alpha 2f\|_{L^{p_0}_xL^{q_0}_v}
			\\
			&\quad +\|(1-\Delta_x)^\frac b 2(1-\Delta_v)^\frac \beta 2v\cdot\nabla_xf\|_{L^{p_1}_xL^{q_1}_v}\Big),
		\end{aligned}
	\end{equation*}
	which establishes that \eqref{general:4} holds for every compactly supported $f$ satisfying \eqref{general:5}.
\end{proof}

\bibliographystyle{plain} 
\bibliography{energy}

\end{document}